\newtheorem{theorem}{Theorem}[section]
\newtheorem*{theorem*}{Theorem}
\newtheorem{lemma}[theorem]{Lemma}
\newtheorem{prop}[theorem]{Proposition}
\newtheorem{cor}[theorem]{Corollary}
\theoremstyle{definition}
\newtheorem{rem}[theorem]{Remark}
\newtheorem{example}[theorem]{Example}
\newtheorem{Question}[theorem]{Question}
\author[S. Ghara]{Soumitra Ghara}
\author[R. Gupta]{Rajeev Gupta}
\author[M. R. Reza]{Md. Ramiz Reza}
\address[S. Ghara]{Department of Mathematics and Statistics\\
Laval University\\
Quebec City - G1V 0A6}
\address[R. Gupta]{School of Mathematics and Computer Science\\
Indian Institute of Technology Goa\\
Goa -  403401}
\address[Md. R. Reza]{School of Mathematics\\
Indian Institute of Science Education and Research Thiruvananthapuram \\
Kerala - 695551}
\email[S. Ghara]{ghara90@gmail.com}
\email[R. Gupta]{rajeev.iisc@live.com}
\email[M. R. Reza]{ramiz.md@gmail.com}
\thanks{
The first author was supported by the Fields Post-doctoral Fellowship of the Fields Institute for Research in Mathematical Sciences, Canada and by a post-doctoral fellowship of Laval University, Canada. The second author was supported through the INSPIRE faculty grant (Ref. No. DST/INSPIRE/04/2017/002367).}
\begin{document}
\title [A local Douglas formula for higher order weighted Dirichlet-type integrals]{A local Douglas formula for higher order weighted Dirichlet-type integrals}
\begin{abstract}
We prove a local Douglas formula for  higher order weighted Dirichlet-type integrals. With the help of this formula, we study the multiplier algebra of the associated higher order weighted Dirichlet-type spaces $\mathcal H_{\pmb\mu},$ induced by an $m$-tuple
$\pmb \mu =(\mu_1,\ldots,\mu_{m})$ of finite non-negative Borel measures on the unit circle. In particular, it is shown that any weighted Dirichlet-type space of order $m,$ for $m\geqslant 3,$ forms an algebra under pointwise product. We also prove that every non-zero closed $M_z$-invariant subspace of $\mathcal H_{\pmb\mu},$ has codimension $1$ property if $m\geqslant 3$ or $\mu_2$ is finitely supported.
As another application of local Douglas formula obtained in this article, it is shown that for any $m\geqslant 2,$ weighted Dirichlet-type space of order $m$ does not coincide with any de Branges-Rovnyak space $\mathcal H(b)$ with equivalence of norms.
\end{abstract}


 \maketitle

\section{Introduction}
The symbols $\mathbb Z, \mathbb N$ and $\mathbb Z_{\geqslant 0}$ will denote the set of  integers, positive integers and non-negative integers respectively. We use the symbols $\mathbb T$ and $\mathbb D$ to denote the unit circle and the open unit disc in the complex plane $\mathbb C$ respectively. The notation $\mathcal M_{+}(\mathbb T)$ stands for the set of all finite non-negative Borel measures on $\mathbb T$. Let $\mathcal O(\mathbb D)$ denote the space of all complex valued holomorphic functions on $\mathbb D.$ For each $\alpha \in \mathbb R,$ let $\mathcal D_{\alpha}$ denote the subspace of $\mathcal O(\mathbb D)$ given by
\begin{equation*}
\mathcal D_{\alpha}:= \bigg\{f=\sum_{k=0}^\infty a_k z^k \Big|  \|f\|^2_{\alpha} := \sum_{k=0}^\infty(k+1)^{\alpha} |a_k|^2 < \infty \bigg\},
\end{equation*}
see \cite{TaylorDA} for a detailed study of the space $\mathcal D_{\alpha}.$ Many classical function spaces arise as  $\mathcal D_{\alpha}$ spaces. For examples $\mathcal D_{-1}$ coincides with the Bergman space, the space $\mathcal D_0$ coincides with the Hardy space $H^2$  and $\mathcal D_{1}$ coincides with the  Dirichlet space $\mathcal D.$ 
Recall that the Dirichlet space $\mathcal D$ consists of all  functions $f$ in $\mathcal O(\mathbb D)$ for which the  Dirichlet integral $D(f)$, given by
\begin{equation*}
D(f):= \displaystyle\int_{\mathbb D}\big|f^\prime(z)\big|^2 dA(z),
\end{equation*}
is finite. Here $A(z)$ denotes the {\it normalized Lebesgue area measure} on $\mathbb D.$ It turns out that $ \mathcal D$ is contained in the Hardy space $H^2.$  
In his study of minimal surfaces, Douglas \cite{Douglas} derived and used the following formula 
   \begin{align*}
   D(f)= \int_{\mathbb T}\int_{\mathbb T}\frac{|f^{*}(\zeta)- f^{*}(\lambda)|^2}{|\zeta-\lambda|^2}d\sigma(\zeta)d\sigma(\lambda),\,\,f\in \mathcal D,
   \end{align*}
where $\sigma(\zeta)$ denotes the {\it normalized arc length measure} on $\mathbb T$ and $f^{*}(\lambda)$ denotes the non-tangential limit of $f$ at $\lambda\in\mathbb T$ (which exists a.e. on $\mathbb T$). This and similar formulae turns out to be very useful in understanding the boundary behaviour of functions in $\mathcal D,$ see \cite{BeurlingA}. Later, Richter and Sundberg (see \cite[p. 356]{RS}) introduced the notion of the local Dirichlet-type integral $D_{\lambda}(f)$ of a function $f$ in $H^2$ at a point $\lambda\in\mathbb T$  in the following manner:
\begin{align*}
D_{\lambda}(f):= \int_{\mathbb T} \frac{|f^{*}(\zeta)- f^{*}(\lambda)|^2}{|\zeta-\lambda|^2}d\sigma(\zeta).
\end{align*}
 They established various remarkable formulae for the local Dirichlet-type integral in \cite[Sec. 3]{RS}. 
One of the motivations for studying the local Dirichlet-type integral was to understand the weighted Dirichlet-type integral $D_{\mu}(f)$ for a function $f\in\mathcal O(\mathbb D) $ and a measure $\mu\in \mathcal M_{+}(\mathbb T),$ a notion introduced and studied by Richter in order to describe all cyclic analytic $2$-isometries, see \cite[Theorem 5.1]{R}. 
The weighted Dirichlet-type integral $D_{\mu}(f)$ turns out to be a key ingredient in describing the structure of the invariant subspaces of the Dirichlet shift, that is, the operator $M_z$ of multiplication by the co-ordinate function $z$ on the Dirichlet space $\mathcal D,$ see \cite[Theorem 3.2]{RS-92}. For any $m\in\mathbb Z_{\geqslant 0},$ the operator $M_z$ on $\mathcal D_{m}$ turns out to be an analytic $(m+1)$ isometry, see \cite{AglerStan1}, \cite{AglerStan2}, \cite{AglerStan3} for a detailed study of $m$-isometries. 
Thus if one desires to describe the structure of the invariant subspaces of $M_z$ on $\mathcal D_{m},$  it is natural to look for model for analytic $(m+1)$ isometries, see \cite{Rydhe}. Following \cite{Rydhe}, for a measure $\mu$ in $\mathcal M_{+}(\mathbb T)$, $f\in \mathcal O(\mathbb D)$ and for a positive integer $n,$ we consider the {\it weighted Dirichlet-type integral of $f$ of order $n$} by 
\begin{align*}
D_{\mu,n}(f):= \frac{1}{n!(n-1)!}\displaystyle\int_{\mathbb D}\big|f^{(n)}(z)\big|^2 P_{\!\mu}(z)(1-|z|^2)^{n-1}dA(z).
\end{align*} 
Here $f^{(n)}(z)$ denotes the $n$-th order derivative of $f$ at $z,$ and $P_{\!\mu}(z)$ is the Poisson integral of the measure $\mu,$ that is,
\begin{align*}
P_{\!\mu}(z):=\int_{\mathbb T} \frac{1-|z|^2}{|z-\zeta|^2} d\mu(\zeta),\,\,\,z\in \mathbb D.
\end{align*}  
For a function $f\in \mathcal O(\mathbb D),$ it will be also useful for us to consider the {\it {weighted Dirichlet-type integral of order}} $0$  defined by
\begin{align*}
D_{\mu,0}(f):= \displaystyle\lim_{R\to 1^{-}}\displaystyle\int_{\mathbb T}\big|f(R\zeta)\big|^2 P_{\!\mu}(R\zeta) d\sigma(\zeta),
\end{align*}
provided the limit exists. 
Note that if $\mu=\sigma$ then the integral $D_{\sigma,0}(f)$ coincides with the square of the Hardy norm of  $f.$ In case of $n=1,$ the integral $D_{\mu,1}(f)$ coincides with the weighted Dirichlet-type integral $D_{\mu}(f)$ for every $\mu\in\mathcal M_+(\mathbb T).$ In particular if $\mu=\sigma$ then $D_{\sigma,1}(f)$ coincides with $D(f).$ If $\mu$ is $\delta_{\lambda}$, the Dirac measure at the point $\lambda \in \mathbb T$, we adopt a simpler notation $D_{\lambda,n}(\cdot)$ in place of $D_{\delta_{\lambda},n}(\cdot)$ and we refer it as a {\it local Dirichlet-type integral of order $n$ at $\lambda$}.   
We would like to mention that a similar notion, namely, local Dirichlet-type integral of order $n$ of a function at a point $\lambda\in\mathbb T$ is introduced and studied recently in \cite[p. 3]{LGR}. Nevertheless, we will see in  Remark \ref{Differs} that this notion differs from that of local Dirichlet-type integral of order $n$ introduced in this article.

For a measure $\mu\in \mathcal M_{+}(\mathbb T)/\{0\}$ and for each $n\in\mathbb Z_{\geqslant 0},$ we consider the semi-inner product space $\mathcal H_{\mu, n}$ given by
$$\mathcal H_{\mu, n}:=\big\{f\in \mathcal O(\mathbb D): D_{\mu,n}(f)< \infty\big\},$$
associated to the semi-norm $\sqrt{D_{\mu,n}(\cdot)}.$ In case $\mu$ is $\delta_{\lambda}$, $\lambda\in\mathbb T,$ we will use a simpler notation $\mathcal H_{\lambda,n}$ in place of $\mathcal H_{\delta_{\lambda},n}$ and we refer it as a {\it local Dirichlet space of order $n$ at $\lambda$}. If $\mu = 0,$ we set $\mathcal H_{\mu,n}= H^2$  for every $n\in \mathbb Z_{\geqslant 0}.$  If $\mu=\sigma,$ by a straightforward computation it follows that for a holomorphic function $f=\sum_{k=0}^\infty a_k z^k$ in $\mathcal O(\mathbb D),$ we have 
\begin{equation*}
D_{\sigma, n}(f)=\sum_{k=n}^\infty \binom{k}{n}|a_k|^2, \,\,\,\quad n\in\mathbb Z_{\geqslant 0},
\end{equation*}
where $\binom{k}{n}:=\frac{k!}{n!(k-n)!}$ for any $k\geqslant n.$ It follows that for each $n\in\mathbb Z_{\geqslant 0},$ the space $\mathcal H_{\sigma,n}$ coincides as a set with the space $\mathcal D_n.$ The reader is referred to \cite{GGR} (see also \cite{Rydhe}) for several properties of the spaces $\mathcal H_{\mu,n}$ for an arbitrary non-negative measure $\mu$ and a positive integer $n.$  

Richter and Sundberg had shown that, for any $\lambda\in\mathbb T,$ $D_{\lambda,1}(f)= D_{\lambda}(f),$ for every $f$ in $H^2,$ see \cite[Proposition 2.2]{RS}. In these notations, we note that
\begin{align*}
D_{\lambda,1}(f)= D_{\sigma,0} \Big(\frac{f(z)-f^{*}(\lambda)}{z-\lambda}\Big),
\end{align*}
for every $f\in \mathcal H_{\lambda,1},$ see also \cite[Proposition 1]{Sara} for an alternative proof. This formula is often known as a {\textit{local Douglas formula}}, see \cite[Section 7.2]{Primer}.
In this article, we establish a local Douglas formula for higher order local Dirichlet-type integrals, namely, we prove the following theorem.
\begin{theorem}\label{generalized local Douglas formula}
Let $n$ be a positive integer, $\lambda\in \mathbb T$, and $f\in\mathcal O (\mathbb D)$. Then $f\in \mathcal H_{\lambda, n}$ if and only if $f=\alpha+(z-\lambda)g$ for some $g$ in $\mathcal H_{\sigma,n-1}$ and $\alpha\in \mathbb C.$ 
Moreover, in this case, the following statements hold:\begin{itemize}
\item[(i)]$D_{\lambda,n}(f) =D_{\sigma, n-1}(g),$
\item[(ii)]$f(z)\to \alpha$ as $z\to\lambda$ in each oricyclic approach region $|z-\lambda|<\kappa (1-|z|^2)^{\frac{1}{2}}$, $\kappa>0$. In particular, $f^*(\lambda)$ exists and is equal to $\alpha.$
\end{itemize} 
\end{theorem}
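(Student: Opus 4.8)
The plan is to reduce both the equivalence and statement (i) to a single area-integral identity. Setting $g=(f-\alpha)/(z-\lambda)$, which is holomorphic on $\mathbb D$ for every $\alpha\in\mathbb C$, the Leibniz rule gives
\[
f^{(n)}=(z-\lambda)g^{(n)}+n\,g^{(n-1)},
\]
and $f^{(n)}$ does not depend on the choice of $\alpha$ (a change in $\alpha$ alters $g$ by a multiple of $1/(z-\lambda)$, which contributes nothing to the right-hand side). Since the Poisson integral of $\delta_\lambda$ is $(1-|z|^2)/|z-\lambda|^2$ and $P_\sigma\equiv 1$, statement (i) is, for $n\geqslant 2$, equivalent to
\[
\int_{\mathbb D}|f^{(n)}(z)|^2\,\frac{(1-|z|^2)^n}{|z-\lambda|^2}\,dA(z)=n(n-1)\int_{\mathbb D}|g^{(n-1)}(z)|^2\,(1-|z|^2)^{n-2}\,dA(z);
\]
the case $n=1$ is exactly the Richter--Sundberg local Douglas formula recalled above. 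I would therefore take $n\geqslant 2$ and first prove this identity for $f$ holomorphic in a neighbourhood of $\overline{\mathbb D}$, where all integrals converge and the boundary terms below are harmless.

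The computational core is an integration by parts. Writing $h=g^{(n-1)}$ and expanding $|(z-\lambda)h'+n h|^2$, the left-hand integrand breaks into a diagonal piece $|h'|^2(1-|z|^2)^n$, a piece $n^2|h|^2(1-|z|^2)^n/|z-\lambda|^2$, and a cross piece $2n\,\mathrm{Re}\big(h'\bar h/\overline{(z-\lambda)}\big)(1-|z|^2)^n$. For the diagonal piece I would apply Green's theorem through $\Delta|h|^2=4|h'|^2$ together with $\Delta\big((1-|z|^2)^n\big)=4n(1-|z|^2)^{n-2}(n|z|^2-1)$; for the cross piece I would integrate by parts in $z$ via the complex Stokes theorem, using that $\bar h/\overline{(z-\lambda)}$ is anti-holomorphic, turning it into $2n^2\int\mathrm{Re}\big(z/(z-\lambda)\big)|h|^2(1-|z|^2)^{n-1}\,dA$. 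All boundary terms vanish because $(1-|z|^2)^n$ and its radial derivative vanish on $\mathbb T$ when $n\geqslant 2$. The decisive simplification is that, since $|\lambda|=1$, one has $(1-|z|^2)+2\,\mathrm{Re}\big(z\overline{(z-\lambda)}\big)=|z-\lambda|^2$, so the cross piece and the $n^2$-piece add up to $n^2\int|h|^2(1-|z|^2)^{n-1}\,dA$; adjoining the diagonal piece and using $(n|z|^2-1)+n(1-|z|^2)=n-1$ produces exactly the constant $n(n-1)$. This establishes (i) for functions analytic across $\mathbb T$.

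To pass to arbitrary $f$ I would dilate, $f_r(z)=f(rz)$; a change of variables gives
\[
n!(n-1)!\,D_{\lambda,n}(f_r)=\int_{|u|<r}|f^{(n)}(u)|^2\,\frac{(r^2-|u|^2)^n}{|u-r\lambda|^2}\,dA(u),
\]
and the weight here is dominated by, and in fact increases to, $(1-|u|^2)^n/|u-\lambda|^2$ as $r\uparrow 1$ (for $n=2$ the required inequality reduces to $(r-|u|)^3\geqslant 0$). For the ``if'' direction I would instead dilate $g$, apply the identity just proved to $\alpha+(z-\lambda)g_r$, and let $r\uparrow 1$; Fatou's lemma then yields $f\in\mathcal H_{\lambda,n}$ with $D_{\lambda,n}(f)\leqslant D_{\sigma,n-1}(g)$. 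The ``only if'' direction is the step I expect to be the main obstacle. From the identity on dilations and the weight domination, $D_{\sigma,n-1}\big((f_r-f(r\lambda))/(z-\lambda)\big)=D_{\lambda,n}(f_r)\leqslant D_{\lambda,n}(f)<\infty$, so the components of these difference quotients orthogonal to the polynomials of degree $\leqslant n-2$ stay bounded in $\mathcal H_{\sigma,n-1}$; a subsequence converges weakly, hence locally uniformly, to some $h\in\mathcal H_{\sigma,n-1}$. Subtracting $(z-\lambda)h$ leaves polynomials of degree $\leqslant n-1$ converging locally uniformly to a polynomial $P$ of degree $\leqslant n-1$, so $f=P+(z-\lambda)h$; writing $P=\alpha+(z-\lambda)Q$ with $\alpha=P(\lambda)$ and $\deg Q\leqslant n-2$ exhibits $f=\alpha+(z-\lambda)g$ with $g=Q+h\in\mathcal H_{\sigma,n-1}$. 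Weak lower semicontinuity of the seminorm gives $D_{\sigma,n-1}(g)\leqslant D_{\lambda,n}(f)$, and together with the ``if'' inequality this forces equality in (i); since $1/(z-\lambda)\notin\mathcal H_{\sigma,n-1}$, the constant $\alpha$ is unique.

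Finally, for (ii) I would use that $g\in\mathcal H_{\sigma,n-1}\subseteq\mathcal H_{\sigma,0}=H^2$ and that every $H^2$ function obeys $(1-|z|^2)^{1/2}|g(z)|\to 0$ as $|z|\to 1^-$ (split $g$ into a Taylor polynomial plus a tail of small norm and bound the tail by the reproducing kernel). Then in any region $|z-\lambda|<\kappa(1-|z|^2)^{1/2}$ we get $|f(z)-\alpha|=|z-\lambda|\,|g(z)|<\kappa(1-|z|^2)^{1/2}|g(z)|\to 0$ as $z\to\lambda$; as the radial segment to $\lambda$ eventually lies in every such region, $f^*(\lambda)$ exists and equals $\alpha$.
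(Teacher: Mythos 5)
Your proposal is correct, but it follows a genuinely different route from the paper. The paper never integrates by parts: it shows that the map $T\colon f\mapsto((z-\lambda)f)^{(n)}$ is a \emph{unitary} from $(\mathcal U_{\sigma,n-1},\sqrt{D_{\sigma,n-1}(\cdot)})$ onto the weighted Bergman space $A^2(d\nu_{\lambda,n})$ by computing the reproducing kernels of both spaces as explicit power series and checking they agree (Lemma \ref{lem kernel of A_2(nu)} and Proposition \ref{isometry of T}); the theorem then falls out because surjectivity of $T$ hands you the decomposition $f=\alpha+(z-\lambda)g$ directly, with no limiting argument. You instead prove the pointwise identity $\int|f^{(n)}|^2(1-|z|^2)^n|z-\lambda|^{-2}\,dA=n(n-1)\int|g^{(n-1)}|^2(1-|z|^2)^{n-2}\,dA$ by Green's theorem for functions analytic across $\overline{\mathbb D}$ (taking, implicitly, $\alpha=f(\lambda)$ so that $g$ also extends across $\mathbb T$), and then pass to general $f$ by dilation, the weight inequality $(r^2-|u|^2)^n|u-\lambda|^2\leqslant(1-|u|^2)^n|u-r\lambda|^2$, Fatou, and weak compactness in $\mathcal U_{\sigma,n-1}$. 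I checked your algebra: the expansion of $|(z-\lambda)h'+nh|^2$, the Laplacian identities, the cancellation via $(1-|z|^2)+2\,\mathrm{Re}(z\overline{(z-\lambda)})=|z-\lambda|^2$, and the final constant $n(n-1)$ all come out right, and the weak-limit step in the ``only if'' direction correctly recovers $f=P+(z-\lambda)h$ with $\deg P\leqslant n-1$. Two small imprecisions: the dilated weight does \emph{not} increase monotonically to the limit weight (domination is what you need and is all that is true; for $n=1$ domination actually fails on the radius toward $\lambda$, which is why your restriction to $n\geqslant2$ there is essential), and the $n=2$ inequality does not literally reduce to $(r-|u|)^3\geqslant0$ — but it does hold (the difference is affine in $\mathrm{Re}(u\bar\lambda)$ and nonnegative at both endpoints, and the case $n>2$ follows from $n=2$ since $r^2-|u|^2\leqslant1-|u|^2$). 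In terms of trade-offs, the paper's kernel computation buys surjectivity and the isometry in one stroke with no boundary terms or compactness arguments, while your approach is more elementary, closer in spirit to the Richter--Sundberg proof for $n=1$, and makes the mechanism of the identity visible rather than hiding it in a power-series match.
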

Thus one can rewrite the local Douglas formula for higher order Dirichlet-type integrals in the following form:
\begin{align}\label{GRS formula}
D_{\lambda,n}(f)= D_{\sigma,n-1} \Big(\frac{f(z)-f^{*}(\lambda)}{z-\lambda}\Big),\,\,\,\,f\in\mathcal H_{\lambda,n},\,\,\,n\in\mathbb N. 
\end{align}
We explore various applications of this generalized local Douglas formula \eqref{GRS formula} in this article. For $0<r<1,$ let $f_r$ be the $r$-dilation of a function $f\in \mathcal O(\mathbb D)$ given by $f_r(z):=f(rz),\,z\in\mathbb D.$ As a first application of the formula \eqref{GRS formula}, we prove in Theorem \ref{C_r contraction} that for each $n\geqslant 2$ and $\mu\in \mathcal M_+(\mathbb T),$ 
\begin{equation}\label{stronger ineq for dilation}
D_{\mu,n}(f_r)\leqslant  \frac{4^{n-1}(2-r)r^{2n}}{(1+r)^{2n-2}} D_{\mu,n}(f)
\end{equation}
for every $f\in \mathcal H_{\mu,n}$ and $0<r<1$. 
Note that  for any $n\in\mathbb N$ and $0<r<1$, we have $\frac{4^{n-1}(2-r)r^{2n}}{(1+r)^{2n-2}}\leqslant 1$. Thus the inequality \eqref{stronger ineq for dilation} is an improvement of \cite[Theorem 1.1]{GGR}, where only the contractivity of the map $f\mapsto f_r$ was shown. Analogous results are known in the case of $n=1,$ see \cite{RS}, \cite{Sara}, \cite{Primer}, \cite{Rangroup}, \cite{MasRans} and references therein.

In the study of function spaces, it is an important problem to determine the associated multiplier algebra. Let $\mu\in \mathcal M_+(\mathbb T)$ and $n$ be a non-negative integer.  The multiplier algebra of $\mathcal H_{\mu,n}$ is defined by 
\begin{align*}
\text{Mult\,}(\mathcal H_{\mu,n}) := \{\varphi\in \mathcal O({\mathbb D}) : \varphi f \in \mathcal H_{\mu,n}\,\text{\,\,for every\,} \,\,f\in \mathcal H_{\mu,n}\}.
\end{align*} 
Several studies have been done in characterizing the multipliers of the  Dirichlet space $\mathcal D$, and more generally for the multipliers of the weighted Dirichlet-type space $\mathcal H_{\mu,1}$ for any $\mu\in \mathcal M_+(\mathbb T),$  see \cite{Stegenga},\cite{RS-92},\cite{Chartrand} and references therein. In Theorem \ref{multipliers in local space}, we characterize $\text{Mult\,}(\mathcal H_{\lambda,n})$ for every $\lambda\in\mathbb T$ and $n\in\mathbb N.$ This characterization also helps us to study the $\text{Mult\,}(\mathcal H_{\mu,n})$ for any $\mu\in \mathcal M_+(\mathbb T)$ and for every $n\in\mathbb N.$  It is well known that the space $\mathcal D_n$ forms an algebra under pointwise product for every $n\geqslant 2$ and consequently $\text{Mult\,}(\mathcal D_n) = \mathcal D_n$ for each $n\geqslant 2,$  see \cite[Remark, p. 239]{TaylorDA}.
Even though $\mathcal D_1,$ the classical Dirichlet space,  is not an algebra under pointwise product (see \cite[Theorem 1.3.1]{Primer}), the subspace of bounded functions in it turns out to be an algebra, see \cite[Theorem 1.3.2]{Primer}. More generally, it is well known that $\mathcal H_{\mu,1}$ is never an algebra under pointwise product for any $\mu\in \mathcal M_+(\mathbb T),$  see \cite[Sec. 7.1, Exercise 4]{Primer}, but $\mathcal H_{\mu,1}\cap H^{\infty}$ forms an algebra under pointwise product, see \cite[p. 170]{RS-92}, where $H^\infty$ is the Banach algebra of bounded holomorphic functions on $\mathbb D$. We extend this result in the following theorem for higher order weighted Dirichlet-type spaces. 
\begin{theorem}\label{prop algebra}
Let $\mu\in \mathcal M_+(\mathbb T)$ and $n\in \mathbb N$. Then $\mathcal H_{\mu,n}\cap \text{Mult\,}(\mathcal H_{\sigma,n-1})$ is an algebra.
\end{theorem}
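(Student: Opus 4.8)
The plan is to verify the two defining properties of an algebra separately, the substance lying entirely in closure under products. Closure under sums and scalars is immediate, since $\mathcal H_{\mu,n}$ is a vector space (by the triangle inequality for the seminorm $\sqrt{D_{\mu,n}(\cdot)}$) and so is $\mathrm{Mult}(\mathcal H_{\sigma,n-1})$. For products, fix $f,g\in\mathcal H_{\mu,n}\cap\mathrm{Mult}(\mathcal H_{\sigma,n-1})$. Because the multiplier algebra of any space is itself an algebra, $fg\in\mathrm{Mult}(\mathcal H_{\sigma,n-1})$ comes for free, so the whole task is to show $fg\in\mathcal H_{\mu,n}$, that is $D_{\mu,n}(fg)<\infty$. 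I will lean on two standard facts recorded at the outset: every $\varphi\in\mathrm{Mult}(\mathcal H_{\sigma,n-1})=\mathrm{Mult}(\mathcal D_{n-1})$ lies in $H^\infty$ with $\|\varphi\|_\infty\le\|M_\varphi\|$ (the usual reproducing-kernel estimate, $\mathcal D_{n-1}$ having bounded point evaluations on $\mathbb D$), and the disintegration $D_{\mu,n}(F)=\int_{\mathbb T}D_{\lambda,n}(F)\,d\mu(\lambda)$, which follows from $P_{\!\mu}(z)=\int_{\mathbb T}\tfrac{1-|z|^2}{|z-\zeta|^2}d\mu(\zeta)$ together with Tonelli's theorem.

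The engine is the local Douglas formula. Since $D_{\mu,n}(f)=\int_{\mathbb T}D_{\lambda,n}(f)\,d\mu<\infty$ and likewise for $g$, for $\mu$-a.e.\ $\lambda\in\mathbb T$ we have $f,g\in\mathcal H_{\lambda,n}$; fix such a $\lambda$. By Theorem \ref{generalized local Douglas formula} we may write $f=f^*(\lambda)+(z-\lambda)f_\lambda$ and $g=g^*(\lambda)+(z-\lambda)g_\lambda$ with $f_\lambda,g_\lambda\in\mathcal H_{\sigma,n-1}=\mathcal D_{n-1}$. The key algebraic point is to split the product asymmetrically so as to never form $f_\lambda g_\lambda$ (a product of two $\mathcal D_{n-1}$ functions, which need not lie in $\mathcal D_{n-1}$): writing
\begin{align*}
fg-f^*(\lambda)g^*(\lambda)=\big(f-f^*(\lambda)\big)g+f^*(\lambda)\big(g-g^*(\lambda)\big)=(z-\lambda)\big(f_\lambda\,g+f^*(\lambda)g_\lambda\big),
\end{align*}
we see that $h:=f_\lambda\,g+f^*(\lambda)g_\lambda\in\mathcal D_{n-1}$, the first summand because $g\in\mathrm{Mult}(\mathcal D_{n-1})$ and $f_\lambda\in\mathcal D_{n-1}$, the second being a scalar multiple of $g_\lambda\in\mathcal D_{n-1}$. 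Thus $fg=f^*(\lambda)g^*(\lambda)+(z-\lambda)h$ with $h\in\mathcal D_{n-1}$, so Theorem \ref{generalized local Douglas formula} applies to $fg$ and gives $D_{\lambda,n}(fg)=D_{\sigma,n-1}(h)$. Using the triangle inequality for $\sqrt{D_{\sigma,n-1}(\cdot)}$ and $(a+b)^2\le 2a^2+2b^2$,
\begin{align*}
D_{\lambda,n}(fg)\le 2\,D_{\sigma,n-1}(f_\lambda g)+2\,|f^*(\lambda)|^2D_{\sigma,n-1}(g_\lambda)\le 2\|M_g\|^2\|f_\lambda\|_{\mathcal D_{n-1}}^2+2\|f\|_\infty^2 D_{\lambda,n}(g),
\end{align*}
where I have used $D_{\sigma,n-1}(\cdot)\le\|\cdot\|_{\mathcal D_{n-1}}^2$, the multiplier bound $\|f_\lambda g\|_{\mathcal D_{n-1}}\le\|M_g\|\,\|f_\lambda\|_{\mathcal D_{n-1}}$, $|f^*(\lambda)|\le\|f\|_\infty$, and $D_{\sigma,n-1}(g_\lambda)=D_{\lambda,n}(g)$.

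It remains to integrate $d\mu$, and here lies the one genuine obstacle. The full norm $\|f_\lambda\|_{\mathcal D_{n-1}}^2=\sum_j(j+1)^{n-1}|b_j|^2$ contains the low-order Taylor coefficients $b_0,\dots,b_{n-2}$ of $f_\lambda$, which are annihilated by the seminorm $D_{\sigma,n-1}$; while $D_{\sigma,n-1}(f_\lambda)=D_{\lambda,n}(f)$ integrates to $D_{\mu,n}(f)<\infty$, these low-order terms must be controlled separately and uniformly in $\lambda$. The crux is therefore the estimate, with a constant $C_n$ independent of $\lambda$,
\begin{align*}
\|f_\lambda\|_{\mathcal D_{n-1}}^2\le C_n\big(\|f\|_\infty^2+D_{\lambda,n}(f)\big)\qquad\text{for every }\lambda\in\mathbb T.
\end{align*}
To prove it I compare Taylor coefficients in $f-f^*(\lambda)=(z-\lambda)f_\lambda$: writing $f=\sum_l a_l z^l$ and $f_\lambda=\sum_j b_j z^j$ gives $b_0=\bar\lambda(f^*(\lambda)-a_0)$ and the recursion $b_j=\bar\lambda(b_{j-1}-a_j)$ for $j\ge1$. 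Since $f\in H^\infty$ forces $|a_l|\le\|f\|_\infty$ and $|f^*(\lambda)|\le\|f\|_\infty$, induction yields the $\lambda$-uniform bound $|b_j|\le (j+2)\|f\|_\infty$, whence $\sum_{j=0}^{n-2}(j+1)^{n-1}|b_j|^2\le C_n'\|f\|_\infty^2$; combined with $\sum_{j\ge n-1}(j+1)^{n-1}|b_j|^2\le C_n''\,D_{\sigma,n-1}(f_\lambda)=C_n''D_{\lambda,n}(f)$ (using $(j+1)^{n-1}\lesssim_n\binom{j}{n-1}$ for $j\ge n-1$), this gives the displayed bound. Feeding it into the previous estimate, integrating over $\mathbb T$, and using that $\mu$ is finite together with $D_{\mu,n}(f),D_{\mu,n}(g)<\infty$, produces
\begin{align*}
D_{\mu,n}(fg)\le 2\|M_g\|^2\big(C_n'\|f\|_\infty^2\,\mu(\mathbb T)+C_n''D_{\mu,n}(f)\big)+2\|f\|_\infty^2D_{\mu,n}(g)<\infty,
\end{align*}
which completes the argument. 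The only subtlety to watch is the existence of $f^*(\lambda),g^*(\lambda)$ for $\mu$-almost every $\lambda$, guaranteed by part (ii) of Theorem \ref{generalized local Douglas formula} on the full-measure set where $f,g\in\mathcal H_{\lambda,n}$.
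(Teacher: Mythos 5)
Your proof is correct, and its skeleton coincides with the paper's: reduce to showing $fg\in\mathcal H_{\mu,n}$ (since $\text{Mult\,}(\mathcal H_{\sigma,n-1})$ is automatically an algebra), disintegrate $D_{\mu,n}=\int_{\mathbb T}D_{\lambda,n}\,d\mu$, apply Theorem \ref{generalized local Douglas formula} at $\mu$-a.e.\ $\lambda$, and use exactly the asymmetric splitting $\frac{fg-(fg)^*(\lambda)}{z-\lambda}=f_\lambda g+f^*(\lambda)g_\lambda$, which is the identity \eqref{useful id} underlying the paper's Lemma \ref{Multiplier property}. Where you genuinely diverge is in estimating the term $D_{\sigma,n-1}(f_\lambda g)$: the paper bounds it by $\|g\|_{s(\sigma,n-1)}^2D_{\lambda,n}(f)$ using a multiplier norm taken with respect to the \emph{seminorm} $D_{\sigma,n-1}$, whereas you use the honest operator norm of $M_g$ on $\big(\mathcal D_{n-1},\|\cdot\|_{\mathcal D_{n-1}}\big)$ and then pay for it by controlling the full norm $\|f_\lambda\|_{\mathcal D_{n-1}}^2$ --- including the low-order Taylor coefficients annihilated by the seminorm --- uniformly in $\lambda$, via the recursion $b_j=\overline{\lambda}(b_{j-1}-a_j)$ and the bound $\|f\|_\infty$ coming from $\text{Mult\,}(\mathcal H_{\sigma,n-1})\subseteq H^\infty$. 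This extra step is not wasted effort: for $n\geqslant 2$ the quantity $\|g\|_{s(\sigma,n-1)}$, as literally defined in the paper by a supremum over all $h$ with $D_{\sigma,n-1}(h)=1$, is infinite whenever $D_{\sigma,n-1}(g)>0$, because one may add to $h$ arbitrarily large polynomials of degree at most $n-2$ without changing $D_{\sigma,n-1}(h)$ while inflating $D_{\sigma,n-1}(gh)$; your $\lambda$-uniform coefficient estimate is precisely the ingredient that makes this part of the argument airtight. In short, your route is a slightly longer but more carefully quantified version of the paper's proof, and all of its steps check out.
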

As a consequence of this theorem, we find that $\mathcal H_{\mu,n}$ also forms an algebra under pointwise product and $\text{Mult\,}(\mathcal H_{\mu,n}) = \mathcal H_{\mu,n}$ for every $\mu\in \mathcal M_+(\mathbb T)/\{0\}$ and  for each $n\geqslant 3,$ see Corollary \ref{Algebra}. The answer to the question of whether $\mathcal H_{\mu,2}$ is an algebra or not, is of mixed nature. We have found that when $\mu=\delta_{\lambda}$ with $\lambda\in\mathbb T,$ then $\mathcal H_{\lambda,2}$ is not an algebra under pointwise product. On the other hand we have  $\mathcal H_{\sigma,2}=\mathcal D_2$ forms an algebra under pointwise product, see \cite[Remark, p. 239]{TaylorDA}.

Let $m\in\mathbb N$ and $\pmb \mu= (\mu_1,\ldots,\mu_{m})$ be an $m$-tuple of non-negative measures in $\mathcal M_{+}(\mathbb T).$  Let  $\mathcal H_{\pmb \mu}$ be the linear subspace of $\mathcal O(\mathbb D)$ given by $\mathcal H_{\pmb \mu}:= \bigcap_{j=1}^{m} \mathcal H_{\mu_j,j}.$
Note that for each $j=1,\ldots,m,$ the space $\mathcal H_{\mu_j,n}$ is contained in the Hardy space $H^2$ for any $n\in\mathbb N,$ see \cite[Corollary 2.5]{GGR}. For $f\in \mathcal H_{\pmb \mu},$ we associate the norm $\|f\|_{\pmb \mu}$ given by
\begin{align*}
\|f\|^2_{\pmb \mu}:= \|f\|^2_{\!_{H^2}}+ \sum\limits_{j=1}^{m} D_{\mu_j,j}(f).
\end{align*} 
The space $\mathcal H_{\pmb \mu}$ with respect to the norm $\|\cdot\|_{\pmb\mu}$ turns out to be a reproducing kernel Hilbert space, see \cite[p. 13]{GGR}. 
In particular, it follows that $\mathcal H_{\mu,n}$ is a Hilbert space with respect to the norm $\|\cdot\|_{\mu,n},$ given by
\begin{align*}
\|f\|_{\mu,n}^2 = \|f\|^2_{H^2}+ D_{\mu,n}(f),\,\,\,\,f\in \mathcal H_{\mu,n}.
\end{align*}
The operator $M_z$ of multiplication by the co-ordinate function on $\mathcal H_{\pmb\mu}$ turns out to be a cyclic analytic $(m+1)$-isometry, see \cite[Theorem 4.1, Corollary 5.3]{GGR}. 
Let $Lat(M_z,\mathcal H_{\pmb\mu})$ denote the lattice of all closed $M_z$-invariant subspaces of $\mathcal H_{\pmb\mu}.$ A subspace $\mathcal W$ in $Lat(M_z,\mathcal H_{\pmb\mu})$ is said to have codimension $k$ property if $ dim (\mathcal W\ominus (z-\lambda)\mathcal W)=k,$ for each $\lambda\in \mathbb D,$ see \cite[Definition 2.12]{Rinvbanach}. 
In order to have a Beurling-type description of $Lat(M_z,\mathcal H_{\pmb\mu}),$ it is expected that $\mathcal W$ has codimension $1$ property for every non-zero $\mathcal W$ in $Lat(M_z,\mathcal H_{\pmb\mu}),$ see \cite[Proposition 2.13]{Rinvbanach}. 
In the case of $m=1,$ that is,  when $\pmb\mu=\mu_1$ and $\mu_1\in \mathcal M_{+}(\mathbb T),$ 
it is well known that  any non-zero $\mathcal W$ in $Lat(M_z,\mathcal H_{\pmb\mu})$ has codimension $1$ property, see \cite[Theorem 3.2]{RS-92}. 
We find that for any $m\geqslant 3$ and any non-zero $\mathcal W$ in $Lat(M_z,\mathcal H_{\pmb\mu})$ has codimension $1$ property. For the remaining case $m=2,$ that is, when $\pmb\mu= (\mu_1,\mu_2),$ we show that the result remains true provided $\mu_2$ is finitely atomic, see Theorem \ref{case m=2}.


There is an intimate connection between the local Dirichlet spaces $\mathcal H_{\lambda}$ and the de Branges-Rovnyak spaces $\mathcal H(b)$ (see \cite{deBR} for basic theory of $\mathcal H(b)$ spaces). For any $b$ in the unit ball of $H^\infty $, the de Branges-Rovnyak space $\mathcal H(b)$ is the reproducing kernel Hilbert space with the reproducing kernel $\frac{1-b(z)\bar{b(w)}}{1-z\bar{w}}$, $z,w\in\mathbb D$.
In \cite{Sara}, Sarason showed that any local Dirichlet space $\mathcal H_{\lambda}$ coincides with a de Branges-Ronyak space $\mathcal H(b),$ with equality of norms, where $b$ can be chosen to be $b(z)=\frac{(1-r)\overline{\lambda}z}{1-r\overline{\lambda}z},$ $z\in\mathbb D,$ with $r=\frac{3-\sqrt{5}}{2}.$ 
In \cite[Theorem 3.1]{CGR-2010}, Chevrot, Guillot and Ransford showed that, in fact, if any $\mathcal H_\mu$ space is equal to some de Branges-Ronyak space $\mathcal H(b)$, with equality of norms, then $\mu$ has to be a positive multiple of a point mass measure for some $\lambda\in\mathbb T$ and further $b$ is identified explicitly. Thereafter, in \cite{CostaraRansford}, the problem of characterizing $\mu$ and $b$ such that $\mathcal H_\mu=\mathcal H(b)$ as a set was addressed
 by obtaining certain necessary and some sufficient conditions for $\mathcal H_\mu=\mathcal H(b).$ In this paper, we study the same question for the spaces $\mathcal H_{\pmb \mu}$ and $\mathcal H(b),$ 
where $\pmb\mu=(\mu_1,\ldots,\mu_{m})$, $m\geqslant 2$, is an arbitrary $m$-tuple of positive measures in $\mathcal M_{+}(\mathbb T)$. 

This article is organized as follows. Section \ref{Local Douglas Formula} is devoted to prove Theorem \ref{generalized local Douglas formula}. Section \ref{Multipliers} deals with a few applications of generalized local Douglas Formula. In this section, we prove Theorem \ref{prop algebra} and as a corollary it is observed that $\mathcal H_{\mu,n}$ is an algebra for any $\mu\in\mathcal M_+(\mathbb T)/\{0\}$ and $n\geqslant 3,$ see Corollary \ref{Algebra}.  
In Section \ref{Cod}, we discuss about the codimension $1$ property for any closed $M_z$-invariant subspaces of $\mathcal H_{\mu,n}.$ In Section \ref{Relationship with H(b)}, it is shown that, for $n\geqslant 2,$ any weighted Dirichlet-type space of order $n$ does not coincide with de Branges-Rovnyak space $H(b)$, with equivalence of norms, for any $b$ in the unit ball of $H^\infty$.


\section{A Generalized Local Douglas Formula}\label{Local Douglas Formula}
In this section, we shall prove a local Douglas formula for local Dirichlet spaces of order $n$.  For any $\lambda\in \mathbb T$ and $n \in \mathbb N$, let $\nu_{\lambda, n}$ denote the positive weighted area measure on $\mathbb D$ given by
 \begin{eqnarray*}
 d\nu_{\lambda, n}(z)=\frac{1}{n! (n-1)!} P_{\delta_{\lambda}}(z)(1-|z|^2)^{n-1}dA(z),
 \end{eqnarray*}
where $P_{\delta_{\lambda}}(z)= \frac{1-|z|^2}{|z-\lambda|^2},$ $z\in\mathbb D,$ is the {\it Poisson integral} of the Dirac measure $\delta_{\lambda}.$ Let $A^2(d\nu_{\lambda, n})$ be the weighted Bergman space
corresponding to the measure $\nu_{\lambda, n}$, that is,
$$A^2(d\nu_{\lambda, n}):=\{f\in \mathcal O(\mathbb D): \|f\|^2=\int_{\mathbb D}|f(z)|^2d\nu_{\lambda, n}(z)< \infty\}.$$

\begin{lemma}\label{lem kernel of A_2(nu)}
For any $\lambda\in \mathbb T$ and $n\in \mathbb N$, $A^2(d\nu_{\lambda, n})$ is a reproducing kernel Hilbert space with the reproducing kernel  $$(n+1)!(n-1)!  \frac{(z-\lambda)(\overline{w}-\overline{\lambda})}{(1-z\overline{w})^{n+2}},~z ,w\in \mathbb D.$$
\end{lemma}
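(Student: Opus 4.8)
The plan is to identify $A^2(d\nu_{\lambda,n})$ with a classical weighted Bergman space by dividing out the zero-free factor $z-\lambda$, and then to transport the well-known kernel of that space. First I would rewrite the defining measure as
\[
d\nu_{\lambda,n}(z)=\frac{1}{n!(n-1)!}\,\frac{(1-|z|^2)^n}{|z-\lambda|^2}\,dA(z),
\]
using that $P_{\delta_\lambda}(z)(1-|z|^2)^{n-1}=(1-|z|^2)^n/|z-\lambda|^2$. Since $\lambda\in\mathbb T$, the function $z-\lambda$ is holomorphic and nowhere zero on $\mathbb D$, so $|f(z)|^2/|z-\lambda|^2=|f(z)/(z-\lambda)|^2$ with $f/(z-\lambda)\in\mathcal O(\mathbb D)$ whenever $f\in\mathcal O(\mathbb D)$.

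Next I would introduce the standard weighted Bergman space $\mathcal B_n:=\{g\in\mathcal O(\mathbb D):\int_{\mathbb D}|g|^2(1-|z|^2)^n\,dA<\infty\}$ and consider the map $Ug:=\sqrt{n!(n-1)!}\,(z-\lambda)g$. A direct substitution in the rewritten measure shows $\|Ug\|^2_{A^2(d\nu_{\lambda,n})}=\int_{\mathbb D}|g|^2(1-|z|^2)^n\,dA=\|g\|^2_{\mathcal B_n}$, so $U$ is isometric; it is onto because $f\mapsto f/(\sqrt{n!(n-1)!}\,(z-\lambda))$ is a well-defined inverse, holomorphy being preserved since $z-\lambda$ is zero-free on $\mathbb D$. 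Thus $U$ is a unitary from $\mathcal B_n$ onto $A^2(d\nu_{\lambda,n})$. Completeness of $A^2(d\nu_{\lambda,n})$ and boundedness of its point evaluations then follow at once from the corresponding classical facts for $\mathcal B_n$, so $A^2(d\nu_{\lambda,n})$ is indeed a reproducing kernel Hilbert space.

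Finally I would compute the kernel of $\mathcal B_n$ and push it forward. The monomials are orthogonal in $\mathcal B_n$ and a Beta-integral computation gives $\|z^k\|^2_{\mathcal B_n}=k!\,n!/(k+n+1)!$, whence
\[
K_{\mathcal B_n}(z,w)=\sum_{k\geqslant 0}\frac{(k+n+1)!}{k!\,n!}\,(z\overline w)^k=\frac{n+1}{(1-z\overline w)^{n+2}}.
\]
Because $U$ is multiplication by the fixed function $\phi(z)=\sqrt{n!(n-1)!}\,(z-\lambda)$, the standard transfer rule for kernels under a unitary multiplication operator gives $K_{A^2(d\nu_{\lambda,n})}(z,w)=\phi(z)\overline{\phi(w)}\,K_{\mathcal B_n}(z,w)$, that is,
\[
K_{A^2(d\nu_{\lambda,n})}(z,w)=n!(n-1)!\,(z-\lambda)(\overline w-\overline\lambda)\cdot\frac{n+1}{(1-z\overline w)^{n+2}}=(n+1)!(n-1)!\,\frac{(z-\lambda)(\overline w-\overline\lambda)}{(1-z\overline w)^{n+2}},
\]
as claimed. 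The only genuinely delicate point is the unitarity of $U$ together with the transfer rule; everything else is a routine Beta integral and bookkeeping of constants. I would verify the transfer rule directly by writing, for $h=Ug\in A^2(d\nu_{\lambda,n})$, the chain $h(z)=\phi(z)\langle g,K_{\mathcal B_n}(\cdot,z)\rangle_{\mathcal B_n}=\langle h,\phi(\cdot)\overline{\phi(z)}K_{\mathcal B_n}(\cdot,z)\rangle_{A^2(d\nu_{\lambda,n})}$, which exhibits $\phi(z)\overline{\phi(w)}K_{\mathcal B_n}(z,w)$ as the reproducing kernel.
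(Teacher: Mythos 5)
Your proof is correct and follows essentially the same route as the paper: both identify $A^2(d\nu_{\lambda,n})$ with the classical weighted Bergman space $A^2((1-|z|^2)^n\,dA)$ via the unitary $\sqrt{n!(n-1)!}\,M_{z-\lambda}$ and then transport the known kernel $\tfrac{n+1}{(1-z\overline w)^{n+2}}$ (the paper phrases the last step as mapping an orthonormal basis forward rather than invoking the multiplication transfer rule, but this is the same computation). Your Beta-integral evaluation of $\|z^k\|^2$ and the resulting constant $(n+1)!(n-1)!$ both check out.
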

\begin{proof}
Let $A^2\big((1-|z|^2)^n dA(z)\big)$ be the weighted Bergman space corresponding to the weighted area measure $(1-|z|^2)^n dA(z)$ on $\mathbb D,$ that is,
\begin{align*}
A^2\big((1-|z|^2)^n dA(z)\big):=\big\{f\in \mathcal O(\mathbb D): \|f\|^2=\int_{\mathbb D}|f(z)|^2(1-|z|^2)^n dA(z)< \infty\big\}.
\end{align*}
It is evident that the operator
$\sqrt{n! (n-1)!}M_{z-\lambda}$ is a unitary from $A^2\big((1-|z|^2)^n dA(z)\big)$ onto $A^2\big(d\nu_{\lambda, n}\big)$, where $M_{z-\lambda}$ denotes the operator of multiplication by the function $(z-\lambda)$. By a direct computation, it is verified that 
$$\left\{\sqrt{(n+1){n+k+1 \choose k}}z^k : k\in \mathbb Z_{\geqslant 0}\right\}$$
forms an orthonormal basis of $A^2\big((1-|z|^2)^n dA(z)\big)$. Therefore, by \cite[Theorem 2.4]{PAULRKHS}, $A^2(d\nu_{\lambda, n})$ has the reproducing kernel 
$$n!(n-1)!(n+1) (z-\lambda)(\overline{w}-\overline{\lambda})\sum_{k=0}^\infty \binom{n+k+1}{k} z^k\overline{w}^k=(n+1)!(n-1)! \frac{(z-\lambda)(\overline{w}-\overline{\lambda})}{(1-z\overline{w})^{n+2}}, \quad z,w\in \mathbb D.$$
This completes the proof.
\end{proof}

\begin{lemma}\label{lem well defined norm}
Let $\lambda\in \mathbb T$, $n\in \mathbb N$, and $f\in \mathcal H_{\sigma, n-1}.$ Then $((z-\lambda)f)^{(n)}(z)=0$ for all $z\in \mathbb D$ if and only if $D_{\sigma, n-1}(f)=0$.
\end{lemma}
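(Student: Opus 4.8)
The plan is to reduce both sides of the stated equivalence to concrete statements about the Taylor coefficients of $f$, using two elementary observations. Write $f=\sum_{k=0}^{\infty}a_kz^k$. Since $\binom{k}{n-1}>0$ for every $k\geqslant n-1$, the formula $D_{\sigma,n-1}(f)=\sum_{k=n-1}^{\infty}\binom{k}{n-1}|a_k|^2$ recorded in the introduction shows that $D_{\sigma,n-1}(f)=0$ holds exactly when $a_k=0$ for all $k\geqslant n-1$, that is, when $f$ is a polynomial of degree at most $n-2$. On the other hand, a function holomorphic on the domain $\mathbb D$ whose $n$-th derivative vanishes identically is a polynomial of degree at most $n-1$; hence $((z-\lambda)f)^{(n)}\equiv 0$ precisely when $(z-\lambda)f$ is a polynomial of degree at most $n-1$. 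Thus the lemma amounts to the equivalence: \emph{$f$ is a polynomial of degree $\leqslant n-2$ if and only if $(z-\lambda)f$ is a polynomial of degree $\leqslant n-1$}, under the standing hypothesis $f\in\mathcal H_{\sigma,n-1}$.

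The implication starting from $D_{\sigma,n-1}(f)=0$ is immediate: if $f$ is a polynomial of degree at most $n-2$, then $(z-\lambda)f$ is a polynomial of degree at most $n-1$, so its $n$-th derivative is identically zero, and the hypothesis is not even needed here.

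For the converse I would argue as follows. Suppose $h:=(z-\lambda)f$ is a polynomial of degree at most $n-1$. Dividing by $z-\lambda$ yields $h(z)=(z-\lambda)q(z)+h(\lambda)$ for a polynomial $q$ of degree at most $n-2$, whence
\begin{align*}
f(z)=q(z)+\frac{h(\lambda)}{z-\lambda},\qquad z\in\mathbb D.
\end{align*}
Being a polynomial, $q$ lies in $\mathcal H_{\sigma,n-1}$; combined with $f\in\mathcal H_{\sigma,n-1}$ and the linearity of this space, this forces $\frac{h(\lambda)}{z-\lambda}=f-q\in\mathcal H_{\sigma,n-1}$. The crucial point is that the singular factor $\frac{1}{z-\lambda}$, although holomorphic on $\mathbb D$ because $\lambda\in\mathbb T$ lies on the boundary, has infinite Dirichlet-type integral of order $n-1$: expanding $\frac{1}{z-\lambda}=-\sum_{k=0}^{\infty}\overline{\lambda}^{\,k+1}z^k$ (valid on $\mathbb D$ since $|\lambda|=1$) gives coefficients of modulus one, so that $D_{\sigma,n-1}\big(\tfrac{1}{z-\lambda}\big)=\sum_{k=n-1}^{\infty}\binom{k}{n-1}=\infty$. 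Since $D_{\sigma,n-1}$ is a quadratic form, $D_{\sigma,n-1}(f-q)=|h(\lambda)|^2\cdot\infty$ unless $h(\lambda)=0$; the membership just established therefore forces $h(\lambda)=0$, so that $f=q$ is a polynomial of degree at most $n-2$ and $D_{\sigma,n-1}(f)=0$, as required.

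The main obstacle is precisely this converse, and within it the single step $h(\lambda)=0$: this is the only place where the hypothesis $f\in\mathcal H_{\sigma,n-1}$ genuinely enters, and it is indispensable, since $f=\frac{1}{z-\lambda}$ itself satisfies $((z-\lambda)f)^{(n)}\equiv 0$ yet is not a polynomial. The mechanism that rules out this example is the divergence of $\sum_{k\geqslant n-1}\binom{k}{n-1}$, coming from the polynomial growth $\binom{k}{n-1}\sim k^{n-1}/(n-1)!$ of the weights defining $D_{\sigma,n-1}$.
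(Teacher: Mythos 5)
Your proof is correct, and the easy direction plus the reduction to ``$(z-\lambda)f$ is a polynomial of degree at most $n-1$'' match the paper exactly; the difference lies in how you extract the key fact that this polynomial must vanish at $\lambda$. The paper gets it from boundary behaviour: since $\mathcal H_{\sigma,n-1}\subset H^2$, the standard estimate $\lim_{|z|\to 1^-}(1-|z|^2)|f(z)|^2=0$ forces $(r\lambda-\lambda)f(r\lambda)\to 0$ along the radius, so the polynomial $(z-\lambda)f$ has a zero at $\lambda$. You instead divide off the root, reduce to showing $\frac{h(\lambda)}{z-\lambda}\in\mathcal H_{\sigma,n-1}$ is impossible unless $h(\lambda)=0$, and rule it out by the explicit coefficient formula $D_{\sigma,n-1}\big(\tfrac{1}{z-\lambda}\big)=\sum_{k\geqslant n-1}\binom{k}{n-1}=\infty$. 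Both arguments are short and correct; yours is more self-contained (it uses only the coefficient formula for $D_{\sigma,n-1}$ already recorded in the introduction, rather than the inclusion $\mathcal H_{\sigma,n-1}\subset H^2$ and the cited growth lemma from the Dirichlet-space literature), while the paper's radial-limit argument is the one that recurs elsewhere in the article (e.g.\ in the proof of Theorem \ref{generalized local Douglas formula}(ii)) and so fits the surrounding machinery. One small presentational point: the identity $D_{\sigma,n-1}(f-q)=|h(\lambda)|^2\cdot\infty$ is best phrased as ``if $h(\lambda)\neq 0$ then $D_{\sigma,n-1}(f-q)=|h(\lambda)|^2D_{\sigma,n-1}\big(\tfrac{1}{z-\lambda}\big)=\infty$, contradicting $f-q\in\mathcal H_{\sigma,n-1}$,'' but the substance is fine.
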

\begin{proof} Suppose that $f\in \mathcal H_{\sigma, n-1}$ and $((z-\lambda)f)^{(n)}(z)=0$ for all $z\in \mathbb D.$  Then $(z-\lambda)f$ is a polynomial of degree at most $n-1$. Note that $\mathcal H_{\sigma, n-1}$ is contained in the Hardy space $H^2$(see \cite[Corollary 2.5]{GGR}, \cite[Corollary 1, Theorem 7]{TaylorDA}). Therefore, in view of \cite[Lemma 1.5.4]{Primer}, we find that  
$\lim_{|z|\to 1^{-}}(1-|z|^2)|f(z)|^2=0$. Hence it follows that
$\lim_{r\to 1^{-}}(r\lambda-\lambda)f(r\lambda)=0.$ This gives us that the polynomial $(z-\lambda)f$ must have a zero at $\lambda$ and consequently $f$ is a polynomial of degree  at most $n-2.$ Therefore, $D_{\sigma, n-1}(f)=0.$
 Conversely, assume that $D_{\sigma, n-1}(f)=0$. This implies $f$ is a polynomial of degree at most $n-2$, and therefore $((z-\lambda)f)^{(n)}(z)= 0$ for all $z\in \mathbb D$. 
\end{proof}

Let $j\in\mathbb Z_{\geqslant 0}.$ Note that, in general $\sqrt{D_{\sigma,j}(\cdot)}$ represents a seminorm on $\mathcal H_{\sigma,j}$ (unless $j=0$). Consider, the subspace $\mathcal U_{\sigma,j}$ of $\mathcal H_{\sigma,j}$ given by 
$$\mathcal U_{\sigma,j}= \big\{f \in \mathcal O(\mathbb D): f(z)=\sum_{k=j}^{\infty}a_kz^k, D_{\sigma,j}(f) < \infty \big\}.$$
It is straightforward to verify that  $\sqrt{D_{\sigma,j}(\cdot)}$ represents a norm on $\mathcal U_{\sigma,j}$ which is induced by an inner product. Moreover $\mathcal U_{\sigma,j}$ is a reproducing kernel Hilbert space  in which the monomials given by $\{ {\binom{k}{j}}^{-\frac{1}{2}} z^k :k\geqslant j\}$ forms an orthonormal basis. The reproducing kernel $K_{\sigma,j}$ of $\mathcal U_{\sigma, j}$ is given by
\begin{equation}\label{kernel of standard weighted Dirichlet space}
K_{\sigma, j}(z,w)=\sum_{k=j}^\infty \binom{k}{j}^{-1} z^k\overline{w}^k, \quad z, w\in \mathbb D.
\end{equation}

For any $\alpha>0$ and $j\in\mathbb Z_{\geqslant 0}$, the Pochammer symbol $(\alpha)_{j}$  is defined by 
\begin{equation*}
(\alpha)_j=\begin{cases}\alpha(\alpha+1)\cdots(\alpha+j-1),~&  j\geqslant 1\\
1,~& j=0. \end{cases}
\end{equation*}
\begin{prop}\label{isometry of T}
Let $n$ be a positive integer, $\lambda\in \mathbb T$, and $T:\mathcal O(\mathbb D)\to \mathcal O(\mathbb D)$ be the linear map given by
$$(Tf)(z)=((z-\lambda)f)^{(n)}(z),~z\in \mathbb D,~f\in \mathcal O(\mathbb D).$$
Then $T$ maps the Hilbert space $(\mathcal U_{\sigma, n-1}, \sqrt{D_{\sigma, n-1}(\cdot)})$ isometrically onto $A^2(d\nu_{\lambda, n}).$
\end{prop}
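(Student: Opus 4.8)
The plan is to transfer everything to the standard weighted Bergman space $A^2\bigl((1-|z|^2)^n\,dA\bigr)$, where the monomials are orthogonal, by inverting the unitary supplied by Lemma \ref{lem kernel of A_2(nu)}. Since $\lambda\in\mathbb T$, the function $\tfrac{1}{z-\lambda}$ is holomorphic on $\mathbb D$, so $M_{z-\lambda}$ is a bijection of $\mathcal O(\mathbb D)$ and the unitary $U:=\sqrt{n!(n-1)!}\,M_{z-\lambda}$ from $A^2\bigl((1-|z|^2)^n\,dA\bigr)$ onto $A^2(d\nu_{\lambda,n})$ has a genuine inverse $U^{-1}=\tfrac{1}{\sqrt{n!(n-1)!}}M_{1/(z-\lambda)}$. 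Consequently $T$ is a surjective isometry onto $A^2(d\nu_{\lambda,n})$ if and only if
\[
W:=U^{-1}T,\qquad Wf=\frac{1}{\sqrt{n!(n-1)!}}\,\frac{\bigl((z-\lambda)f\bigr)^{(n)}}{z-\lambda},
\]
is a surjective isometry onto $A^2\bigl((1-|z|^2)^n\,dA\bigr)$. The advantage of this reformulation is that in the target space the monomials are mutually orthogonal with $\|z^p\|^2=\tfrac{n!\,p!}{(n+p+1)!}=\int_0^1 t^p(1-t)^n\,dt$, so all inner products become diagonal sums.

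For the isometry I would work on the orthonormal basis $e_k=\binom{k}{n-1}^{-1/2}z^k$, $k\geqslant n-1$, of $\mathcal U_{\sigma,n-1}$. A direct differentiation gives $\bigl((z-\lambda)z^k\bigr)^{(n)}=\tfrac{(k+1)!}{(k+1-n)!}z^{k+1-n}-\lambda\tfrac{k!}{(k-n)!}z^{k-n}$; dividing by $z-\lambda$ and expanding $\tfrac{1}{z-\lambda}=-\sum_{p\geqslant 0}\overline\lambda^{\,p+1}z^p$ yields the Taylor coefficients of $We_k$ explicitly. One then computes the Gram matrix $\langle We_k,We_\ell\rangle_{A^2((1-|z|^2)^n dA)}$ as a diagonal sum against the weights $\tfrac{n!\,p!}{(n+p+1)!}$ and must show it equals $\delta_{k\ell}$. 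The Beta-integral representation of these weights is the mechanism that collapses the sums; for instance the case $k=\ell=n-1$ reduces to $\sum_{p\geqslant 0}\tfrac{n!\,p!}{(n+p+1)!}=\int_0^1(1-t)^{n-1}\,dt=\tfrac1n$, which together with the normalizing constants gives $\langle We_{n-1},We_{n-1}\rangle=1$. \emph{The vanishing of the off-diagonal entries is the main obstacle}: after the Beta substitution each sum $\sum_p c^{(k)}_p\overline{c^{(\ell)}_p}\,t^p$ becomes an angular average of $p_k(z)\overline{p_\ell(z)}/|z-\lambda|^2$ for explicit polynomials $p_k$, and one must show the resulting Poisson-type integrals cancel for $k\neq\ell$. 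A cleaner, essentially equivalent route (for $n\geqslant 2$) is to precompose with the differentiation unitary $V\colon f\mapsto \tfrac{1}{\sqrt{(n-1)!(n-2)!}}f^{(n-1)}$ from $\mathcal U_{\sigma,n-1}$ onto $A^2\bigl((1-|z|^2)^{n-2}\,dA\bigr)$, which by the Leibniz identity $\bigl((z-\lambda)f\bigr)^{(n)}=(z-\lambda)f^{(n)}+nf^{(n-1)}$ reduces the claim to showing that the first-order operator $h\mapsto \tfrac{1}{\sqrt{n(n-1)}}\bigl(h'+\tfrac{n}{z-\lambda}h\bigr)$ is a surjective isometry of $A^2\bigl((1-|z|^2)^{n-2}\,dA\bigr)$ onto $A^2\bigl((1-|z|^2)^{n}\,dA\bigr)$; the case $n=1$ is the classical local Douglas formula and is treated directly since $\mathcal U_{\sigma,0}=H^2$.

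Finally, for surjectivity I would argue as follows. Being isometric, $T$ has closed range, so it suffices to show the range is dense. Taking $m=n-1+j$ with $j\geqslant 0$, the function $Tz^{m}=\bigl((z-\lambda)z^{m}\bigr)^{(n)}$ is a polynomial of exact degree $j$ with nonzero leading coefficient $\tfrac{(n+j)!}{j!}$; hence $\{Tz^{n-1+j}:j\geqslant 0\}$ is a triangular system whose linear span is the set of all polynomials. Since the polynomials are dense in $A^2(d\nu_{\lambda,n})$ — being the images under the unitary $U$ of the polynomials, which are dense in $A^2\bigl((1-|z|^2)^n\,dA\bigr)$ — the range of $T$ is dense, and therefore $T$ is onto. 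The same isometric estimate, applied on the dense subspace of polynomials and extended by continuity, also shows that $Tf\in A^2(d\nu_{\lambda,n})$ for every $f\in\mathcal U_{\sigma,n-1}$, so $T$ is well defined as a map into the target space.
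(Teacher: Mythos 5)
Your surjectivity argument is sound (granted the isometry): an isometry into a Hilbert space has closed range, the images $Tz^{n-1+j}$ form a triangular system spanning all polynomials, and polynomials are dense in $A^2(d\nu_{\lambda,n})$ since they contain the $U$-image $(z-\lambda)\mathbb C[z]$ of the dense set of polynomials in $A^2\bigl((1-|z|^2)^n\,dA\bigr)$ (note the $U$-image is not all polynomials, only those divisible by $z-\lambda$, but that still suffices). The genuine gap is that the isometry itself --- the heart of the proposition --- is never established. You reduce it to showing that the Gram matrix $\langle We_k,We_\ell\rangle$ is the identity, verify the single entry $k=\ell=n-1$, and then explicitly flag the vanishing of the off-diagonal entries as ``the main obstacle'' without resolving it. The alternative reduction via $V$ to the first-order operator $h\mapsto \frac{1}{\sqrt{n(n-1)}}\bigl(h'+\frac{n}{z-\lambda}h\bigr)$ is, as you say yourself, essentially equivalent to the original claim, so it does not constitute progress either. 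As written, the central assertion is a plan rather than a proof.

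For comparison, the paper avoids the Gram matrix entirely by a reproducing-kernel argument: it puts the pushforward norm $\|Tf\|_{\mathcal W}:=\sqrt{D_{\sigma,n-1}(f)}$ on the image $\mathcal W=T(\mathcal U_{\sigma,n-1})$ (well defined by Lemma \ref{lem well defined norm}), so that $T$ is an isometry onto $\mathcal W$ by construction; checks that $\mathcal W$ is a reproducing kernel Hilbert space; computes its kernel as
\[
K_{\mathcal W}(z,w)=\frac{\partial^{2n}}{\partial z^n\,\partial\overline w^{\,n}}\Bigl((z-\lambda)(\overline w-\overline\lambda)K_{\sigma,n-1}(z,w)\Bigr),
\]
and matches it, by a single power-series identity, with the kernel $(n+1)!(n-1)!\,\frac{(z-\lambda)(\overline w-\overline\lambda)}{(1-z\overline w)^{n+2}}$ of $A^2(d\nu_{\lambda,n})$ from Lemma \ref{lem kernel of A_2(nu)}. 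Since two reproducing kernel Hilbert spaces with the same kernel coincide with equal norms, this yields both the isometry and the surjectivity at once. If you want to salvage your approach, the kernel identity is exactly the device that replaces the doubly-indexed off-diagonal cancellation you got stuck on: instead of proving $\langle Te_k,Te_\ell\rangle=\delta_{k\ell}$ for all $k,\ell$, one proves the single identity $\sum_k (Te_k)(z)\overline{(Te_k)(w)}=K_{A^2(d\nu_{\lambda,n})}(z,w)$.
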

\begin{proof}
Consider the space $\mathcal W:= T(\mathcal U_{\sigma, n-1})$ endowed with the  norm 
$$\|T(f)\|_{\!_\mathcal W}:=\sqrt{D_{\sigma, n-1}(f)},\quad f\in \mathcal H_{\sigma,n-1}.$$
By Lemma \ref{lem well defined norm}, $\|\cdot\|_{\!_\mathcal W}$ is  a well-defined norm on $\mathcal W$. Clearly $T$ maps $\mathcal U_{\sigma, n-1}$ isometrically onto $\mathcal W$. This gives us that the space $\mathcal W$ endowed with the  norm $\|\cdot\|_{\!_\mathcal W}$ is a Hilbert space. Now we show that $\mathcal W$ is a reproducing kernel Hilbert space. This is equivalent to showing that for each $w\in\mathbb D,$ the evaluation map $E_w: \mathcal W \to \mathbb C,$ defined by $E_w(g)=g(w),$  is continuous. 
Let $w\in\mathbb D$, and $T(f_n)$ converges to $T(f)$ in $\mathcal W$, where $f_n, f\in \mathcal U_{\sigma, n-1}$. Clearly $f_n$ converges to $f$ in $\mathcal U_{\sigma, n-1}$. Since $\mathcal U_{\sigma,n-1}$
is a reproducing kernel Hilbert space, it follows that 
$f_n$ converges to $f$ uniformly on compact subsets of $\mathbb D$. Therefore $T(f_n)$ also converges to $T(f)$  uniformly on compact subsets of $\mathbb D$. This, in particular, shows that $E_w$ is continuous on $\mathcal W$, proving that $\mathcal W$ is a reproducing kernel Hilbert space.

The proposition will now be established by showing 
that the reproducing kernels of $\mathcal W$ and $A^2(d\nu)$ coincide (see \cite[Proposition 2.3]{PAULRKHS}).
As the operator $T$ is unitary, it maps the orthonormal basis $\big\{\binom{k}{n-1}^{-\frac{1}{2}}z^k:k\geqslant n-1\big\}$ of  $\mathcal U_{\sigma,n-1}$ to an orthonormal basis of $\mathcal W.$ 
Thus the reproducing kernel of $\mathcal W$ is given by  
\begin{equation}\label{kernel of K}
K_{\!_\mathcal W}(z,w)=\frac{\partial^{2n}}{\partial z^n \partial\overline{w}^n}\Big((z-\lambda)(\overline{w}-\overline{\lambda})K_{\sigma,n-1}(z,w)\Big),\quad z,w\in \mathbb D.
\end{equation}
In view of \eqref{kernel of standard weighted Dirichlet space}, we see that the expression in \eqref{kernel of K} is equal to
\begin{align*}
&\frac{\partial^{2n}}{\partial z^n \partial\overline{w}^n}\left(\sum_{k=n-1}^\infty{k\choose n-1}^{-1}\Big(z^{k+1}\overline{w}^{k+1}+z^{k}\overline{w}^{k}-\lambda z^{k}\overline{w}^{k+1}-\overline{\lambda}  z^{k+1}\overline{w}^{k}\Big)\right)\\
=&\sum_{k=n-1}^\infty \frac{1}{{k\choose n-1}} \big((k+2-n)_n\big)^2 \, z^{k+1-n}\overline{w}^{k+1-n}+\sum_{k=n}^\infty \frac{1}{{k\choose n-1}}\big((k+1-n)_n\big)^2  z^{k-n}\overline{w}^{k-n}\\
&\hspace{1.2cm}-\sum_{k=n}^\infty \frac{1}{{k\choose n-1}}(k+2-n)_n(k+1-n)_n\Big(\lambda z^{k-n}\overline{w}^{k+1-n}+\overline{\lambda} z^{k+1-n}\overline{w}^{k-n}\Big)\\
=&\sum_{k=0}^\infty\big((k+1)_n\big)^2\bigg({k+n-1\choose n-1}^{-1}+{k+n\choose n-1}^{-1}\bigg)z^{k}\overline{w}^{k}\\&\hspace{2.7cm}-\sum_{k=0}^\infty {k+n\choose n-1}^{-1}(k+2)_n(k+1)_n\Big(\lambda z^{k}\overline{w}^{k+1}+\overline{\lambda}z^{k+1}\overline{w}^{k}\Big)\\
=&(n-1)!\Bigg(\sum_{k=0}^\infty (k+1)_n(2k+n+1) z^{k}\overline{w}^{k}-\sum_{k=0}^\infty (k+1)_{n+1}\Big(\lambda z^{k}\overline{w}^{k+1}+\overline{\lambda}z^{k+1}\overline{w}^{k}\Big)\Bigg).
\end{align*}
Also, by a straightforward calculation, we see that 
\begin{align*} 
&~~~\frac{(z-\lambda)(\overline{w}-\overline{\lambda})}{(1-z\overline{w})^{n+2}}\\
&=\sum_{k=0}^\infty {n+k+1\choose k}\Big(z^{k+1}\overline{w}^{k+1} +  z^{k}\overline{w}^{k}-\lambda z^{k}\overline{w}^{k+1}-\overline{\lambda} z^{k+1}\overline{w}^{k}\Big)\\
&=1+\sum_{k=1}^\infty\Bigg({n+k\choose k-1}+{n+k+1\choose k}\Bigg)z^k\overline{w}^k
-\sum_{k=0}^\infty {n+k+1\choose k} \Big(\lambda z^{k}\overline{w}^{k+1}+\overline{\lambda}z^{k+1}\overline{w}^{k}\Big)\\
&=1+\frac{1}{(n+1)!}\bigg(\sum_{k=1}^\infty (k+1)_n(2k+n+1)z^k\overline{w}^k-\sum_{k=0}^\infty (k+1)_{n+1}\Big(\lambda z^{k}\overline{w}^{k+1}+\overline{\lambda}z^{k+1}\overline{w}^{k}\Big)\bigg)\\
&=\frac{1}{(n+1)!}\bigg(\sum_{k=0}^\infty (k+1)_n(2k+n+1)z^k\overline{w}^k-\sum_{k=0}^\infty (k+1)_{n+1}\Big(\lambda z^{k}\overline{w}^{k+1}+\overline{\lambda}z^{k+1}\overline{w}^{k}\Big)\bigg)\\
&= \frac{1}{(n+1)!(n-1)!}K_{\!_\mathcal W}(z,w).
\end{align*}
Use of Lemma \ref{lem kernel of A_2(nu)} now completes the proof.
\end{proof}

For any holomorphic function $f$ on the unit disc $\mathbb D$ and a complex number $\lambda$ in  $\mathbb T$, let $f^*(\lambda)$ denote the boundary value $f$ at $\lambda$, defined by 
$$f^*(\lambda):=\lim_{r\to 1^-}f(r\lambda),$$
whenever it exists.  
The following theorem (main theorem of this section) is deduced as a consequence of Lemma \ref{isometry of T}. This theorem, in particular, tells us that for any $f$ in $\mathcal H_{\lambda,n},$ the boundary value of $f$  at $\lambda$ exists. As an another consequence of Theorem \ref{generalized local Douglas formula} and  \cite[Proposition 2.6]{GGR}, we see that $\mathcal H_{\lambda,n}$ is contained in $\mathcal H_{\sigma,n-1}.$


\noindent \textbf{Proof of Theorem \ref{generalized local Douglas formula}:}
Assume that $f\in \mathcal H_{\lambda, n}$. Thus we have  $f^{(n)}\in A^2(d\nu_{\lambda,n})$. Now, by Proposition 
\ref{isometry of T}, there exists a $\tilde{g}$ in $\mathcal U_{\sigma,n-1}$ such that $D_{\lambda,n}(f)=D_{\sigma, n-1}(\tilde{g})$ and
$((z-\lambda)\tilde{g})^{(n)}=f^{(n)}$. 
Therefore,
\begin{eqnarray*}
f(z)=(z-\lambda)\tilde{g}(z)+p(z), \quad z\in\mathbb D,
\end{eqnarray*}
for some polynomial $p$ of degree at most $n-1$. Thus, with $q(z)=\frac{p(z)-p(\lambda)}{z-\lambda},$ we have 
\begin{eqnarray*}
f(z)=p(\lambda)+(z-\lambda)g(z), z\in \mathbb D
\end{eqnarray*}  
where $g=\tilde{g}+q$.
Clearly, $D_{\sigma, n-1}(g)=D_{\sigma, n-1}(\tilde{g})=D_{\lambda,n}(f),$ which is finite. Hence $g\in \mathcal H_{\sigma, n-1}$. Conversely, assume that  $f=\alpha+(z-\lambda)g$ for some $g$ 
with $g\in \mathcal H_{\sigma,n-1}.$ Then it follows from Lemma \ref{isometry of T} that 
$f^{(n)}\in A^2(d\nu_{\lambda,n})$, and hence $f\in \mathcal H_{\lambda, n}$. This completes the proof of the first of the theorem. 
Note that, for $f\in \mathcal H_{\lambda,n}$, the equality $D_{\lambda,n}(f) =D_{\sigma, n-1}(g)$ is already established. To complete the proof, suppose that $f=\alpha+(z-\lambda)g$ for some complex number $\alpha$ and $g$ in $\mathcal H_{\sigma,n-1}$. 
Since $\mathcal H_{\sigma,n-1}$ is contained in the Hardy space $H^2$, by  \cite[Lemma 1.5.4]{Primer} we have 
$\lim_{|z|\to 1}(1-|z|^2)^{\frac{1}{2}}|g(z)|=0$.
If $|z-\lambda|<\kappa (1-|z|^2)^{\frac{1}{2}}$, $\kappa>0$,  then 
$$|f(z)-\alpha|<\kappa (1-|z|^2)^{\frac{1}{2}}|g(z)|.$$ Hence it follows that $f(z)\to \alpha$ as $z\to\lambda$ in each oricyclic approach region $|z-\lambda|<\kappa (1-|z|^2)^{\frac{1}{2}}$. This completes the proof of this Theorem.

%
\begin{cor}
Let $n$ be a positive integer and $\mu$ be a  finite positive Borel measure on the unit circle $\mathbb T$. Then for any  $f$ in $\mathcal H_{\mu,n}$,  $f^*(\lambda)$ exists for $\mu$-almost every $\lambda\in \mathbb T$, and the following holds: 
\begin{equation*}
D_{\mu,n}(f)=\int_{\mathbb T}\int_{\mathbb D} \Big|\Big(\frac{f(z)-f^*(\lambda)}{z-\lambda}\Big)^{(n-1)}(z)\Big |^2(1-|z|^2)^{n-1} dA(z) d\mu(\lambda)
\end{equation*}
\end{cor}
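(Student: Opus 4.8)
The plan is to deduce this weighted (global) identity from the purely local statement already proved in Theorem~\ref{generalized local Douglas formula}, by disintegrating the Poisson integral $P_{\!\mu}$ against $\mu$ and then applying the local Douglas formula \eqref{GRS formula} fibrewise. First I would write the Poisson integral as a superposition of point masses, $P_{\!\mu}(z)=\int_{\mathbb T}P_{\delta_{\lambda}}(z)\,d\mu(\lambda)$, and substitute this into the definition of $D_{\mu,n}(f)$. Since the resulting integrand $|f^{(n)}(z)|^2(1-|z|^2)^{n-1}P_{\delta_{\lambda}}(z)$ is non-negative and jointly measurable on $\mathbb D\times\mathbb T$, Tonelli's theorem permits interchanging the two integrations and yields the disintegration
\begin{align*}
D_{\mu,n}(f)=\int_{\mathbb T}D_{\lambda,n}(f)\,d\mu(\lambda).
\end{align*}

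Next I would extract the almost-everywhere existence of boundary values. Because $f\in\mathcal H_{\mu,n}$, the left-hand side above is finite, so $D_{\lambda,n}(f)<\infty$, that is $f\in\mathcal H_{\lambda,n}$, for $\mu$-almost every $\lambda\in\mathbb T$. For each such $\lambda$, part (ii) of Theorem~\ref{generalized local Douglas formula} guarantees that $f^{*}(\lambda)=\lim_{r\to1^-}f(r\lambda)$ exists; being a pointwise limit of the continuous functions $\lambda\mapsto f(r\lambda)$ on the set where it exists, it is $\mu$-measurable there. This settles the first assertion of the corollary.

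For the formula itself, on the full-$\mu$-measure set where $f^{*}(\lambda)$ exists I set $g_{\lambda}(z)=\frac{f(z)-f^{*}(\lambda)}{z-\lambda}$, so that the local Douglas formula \eqref{GRS formula} gives $D_{\lambda,n}(f)=D_{\sigma,n-1}(g_{\lambda})$. Inserting this into the disintegration of the first step and then expressing $D_{\sigma,n-1}(g_{\lambda})$ as the corresponding weighted-Bergman integral over $\mathbb D$ (a direct computation with Taylor coefficients), a final appeal to Tonelli recombines the two integrals into the claimed double integral.

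I expect the principal obstacle to be the measurability bookkeeping rather than any single estimate. One must verify that $\lambda\mapsto f^{*}(\lambda)$ is $\mu$-measurable and, crucially, that after the substitution the map $(z,\lambda)\mapsto\big|g_{\lambda}^{(n-1)}(z)\big|^2$—which depends on $\lambda$ both through the pole at $z=\lambda$ and through $f^{*}(\lambda)$—is jointly measurable as a Carath\'eodory function (holomorphic in $z$, measurable in $\lambda$), so that Tonelli legitimately applies to the final double integral. The analytic heart, namely the fibrewise identity $D_{\lambda,n}(f)=D_{\sigma,n-1}(g_{\lambda})$, is already supplied by Theorem~\ref{generalized local Douglas formula}, so the closing step reduces to this measurability check together with the routine identification of $D_{\sigma,n-1}(g_{\lambda})$ with the stated integral.
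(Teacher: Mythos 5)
Your proposal is correct and follows essentially the same route as the paper: the paper's proof is exactly the Fubini/Tonelli disintegration $D_{\mu,n}(f)=\int_{\mathbb T}D_{\lambda,n}(f)\,d\mu(\lambda)$ followed by an application of Theorem \ref{generalized local Douglas formula} fibrewise, with your version merely making explicit the measurability bookkeeping and the a.e.\ existence of $f^{*}(\lambda)$ that the paper leaves implicit. (When you carry out the ``routine identification'' of $D_{\sigma,n-1}(g_{\lambda})$ with a Bergman-type integral, note that it equals $\frac{1}{(n-1)!(n-2)!}\int_{\mathbb D}|g_{\lambda}^{(n-1)}|^{2}(1-|z|^{2})^{n-2}\,dA$, so the weight and constant in the displayed formula of the corollary as printed do not quite match---a discrepancy in the statement, not in your argument.)
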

\begin{proof}
By  Fubini's theorem, 
\begin{align*}
D_{\mu,n}(f)=\frac{1}{n!(n-1)!}\displaystyle\int_{\mathbb D}\big|f^{(n)}(z)\big|^2 P_{\!\mu}(z)(1-|z|^2)^{n-1}dA(z)
=\int_{\mathbb \mathbb T} D_{\lambda,n}(f) d\mu(\lambda).
\end{align*}
An application of Theorem \ref{generalized local Douglas formula} completes the proof of  the corollary. 
\end{proof}

We finish this section with a few applications of Theorem 
\ref{generalized local Douglas formula}. The first application shows that for any $n> 1$ the map $f\mapsto f_r$, $0\leqslant r <1$, is contractive on $\mathcal H_{\mu,n}$. Here for any $f\in \mathcal O(\mathbb D)$, the $r$th dilation $f_r$ of $f$ is defined by $f_r(z):=f(rz)$, $z\in \mathbb{D}.$ We point out that this result is already obtained in \cite[Proposition 3]{Sara} for $n=1$, and \cite[Theorem 1.1]{GGR} for every $n\geqslant 1.$  One of the consequences of this result is that $\lim_{r\to 1^-} D_{\mu,n}(f_r-f)=0$ for any $f\in \mathcal H_{\mu,n}.$ We start with the following lemma.
\begin{lemma}\label{lemma f_r}
Let $\mathcal H$ be a semi-inner product space of holomorphic functions on the unit disc $\mathbb D$ with the semi-norm $\|\cdot\|$. Suppose that $zf\in \mathcal H$ whenever $f\in \mathcal H$. If $\|zf\| \geqslant \|f\|$ for all $f\in \mathcal H$ then for $\lambda \in \mathbb T$, $0\leqslant r<1$, we have
\begin{equation*}
\|(z-\lambda)f\|\leqslant \frac{2}{1+r}\|(z-r\lambda)f\|,~f\in \mathcal H.
\end{equation*}
\end{lemma}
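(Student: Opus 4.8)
The plan is to reduce the inequality to an elementary estimate on three real scalars attached to $f$, using only that $\mathcal H$ carries a semi-inner product $\langle\cdot,\cdot\rangle$ inducing $\|\cdot\|$ and that $M_z$ is expansive. First I would fix $f\in\mathcal H$ and abbreviate $A=\|zf\|^2$, $B=\|f\|^2$ and $C=2\,\mathrm{Re}\big(\bar\lambda\langle zf,f\rangle\big)$. Since $\lambda\in\mathbb T$ (so $|\lambda|=1$) and $\mathcal H$ is a vector space containing $zf$, both $(z-\lambda)f=zf-\lambda f$ and $(z-r\lambda)f=zf-r\lambda f$ lie in $\mathcal H$, and expanding their semi-norms yields
\[
\|(z-\lambda)f\|^2=A-C+B,\qquad \|(z-r\lambda)f\|^2=A-rC+r^2B.
\]

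Next I would square the asserted inequality and observe that it is equivalent to $4\big(A-rC+r^2B\big)-(1+r)^2\big(A-C+B\big)\geqslant 0$. A direct expansion collects the coefficients of $A$, $B$ and $C$ and factors out the positive quantity $1-r$, reducing the claim to
\[
(r+3)A-(3r+1)B+(1-r)C\geqslant 0.
\]

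The final and most delicate step is to control the indefinite term $C$. Cauchy--Schwarz for the semi-inner product gives $|\langle zf,f\rangle|\leqslant \|zf\|\,\|f\|=\sqrt{AB}$, hence $C\geqslant -2\sqrt{AB}$; since $1-r\geqslant 0$ it then suffices to prove $(r+3)A-(3r+1)B-2(1-r)\sqrt{AB}\geqslant 0$. The key algebraic fact is that this quadratic form in $\sqrt A,\sqrt B$ factors as
\[
\big(\sqrt A-\sqrt B\big)\big((r+3)\sqrt A+(3r+1)\sqrt B\big),
\]
the factorization being forced by the discriminant equalling the perfect square $4(1+r)^2$ — which is exactly what produces the sharp constant $\tfrac{2}{1+r}$. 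The expansivity hypothesis $\|zf\|\geqslant\|f\|$ gives $A\geqslant B$, so $\sqrt A\geqslant\sqrt B$ and both factors are non-negative, closing the argument.

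I expect the main obstacle to be spotting this factorization — equivalently, recognizing that the right route is the squared inequality rather than a triangle-inequality estimate, which only delivers the weaker constant $2$. Once the quadratic form is written down, verifying the factorization and the sign of each factor is routine. One minor point deserves a remark in the semi-norm setting: if $\|(z-r\lambda)f\|=0$, the displayed squared inequality forces $\|(z-\lambda)f\|=0$ as well, so the stated estimate remains valid.
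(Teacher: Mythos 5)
Your proposal is correct and follows essentially the same route as the paper: both expand $\|(z-\lambda)f\|^2$ and $\|(z-r\lambda)f\|^2$ as quadratic forms in $\|zf\|^2$, $\|f\|^2$ and $\mathrm{Re}\,\bar\lambda\langle zf,f\rangle$, control the cross term via Cauchy--Schwarz, and invoke the expansivity $\|zf\|\geqslant\|f\|$. The only difference is in the final algebra: the paper further bounds $\|f\|\,\|zf\|\leqslant\|zf\|^2$ and $\|f\|^2\leqslant\|zf\|^2$ so that all coefficients collapse exactly to zero, whereas you retain $\sqrt{AB}$ and factor the resulting quadratic form in $\sqrt{A},\sqrt{B}$ --- both closings are valid and equally short.
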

\begin{proof} Let $f\in \mathcal H$. Since $\|zf\|\geqslant \|f\|$, it follows that $$|\mbox{Re}~\lambda\langle f, zf\rangle |\leqslant |\lambda\langle f, zf\rangle |\leqslant\|f\| \|zf\|\leqslant \|zf\|^2,$$
where for any complex number $w$, $\mbox{Re}\,w$ denotes the real part of $w$. Hence  
\begin{align*}
&\frac{4}{(1+r)^2}\|(z-r\lambda)f\|^2-\|(z-\lambda)f)\|^2\\&= \Big(\frac{4}{(1+r)^2}-1\Big)\|zf\|^2-\Big(1-\frac{4r^2}{(1+r)^2}\Big)\|f\|^2+2 \Big(1-\frac{4r}{(1+r)^2}\Big)~ \mbox{Re}~\lambda\langle f, zf\rangle\\
&\geqslant \Big(\big(\frac{4}{(1+r)^2}-1\big)  -\big(1-\frac{4r^2}{(1+r)^2}\big)- 2\big(1-\frac{4r}{(1+r)^2}\big)\Big)\|zf\|^2\\
&=0.
\end{align*}
This completes the proof.
%
\end{proof}

\begin{theorem}\label{C_r contraction}
Let $n\geqslant 1$ be an integer, and $\mu$ be in $\mathcal M_+(\mathbb T).$ Then for any  $f$ in $\mathcal O(\mathbb D)$, we have 
\begin{equation}\label{Contractivity of C_r}
D_{\mu,n}(f_r)\leqslant  \frac{4^{n-1}(2-r)r^{2n}}{(1+r)^{2n-2}} D_{\mu,n}(f),~ ~0\leqslant r <1.
\end{equation}
\end{theorem}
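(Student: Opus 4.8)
The plan is to reduce everything to the local case and then run the local Douglas formula \eqref{GRS formula} on both $f$ and its dilate $f_r$. By the Fubini identity $D_{\mu,n}(f)=\int_{\mathbb T}D_{\lambda,n}(f)\,d\mu(\lambda)$ established in the proof of the corollary above, applied to $f$ and to $f_r$, it suffices to prove the pointwise-in-$\lambda$ estimate
\[
D_{\lambda,n}(f_r)\leqslant \frac{4^{n-1}(2-r)r^{2n}}{(1+r)^{2n-2}}\,D_{\lambda,n}(f),\qquad \lambda\in\mathbb T ,
\]
and then integrate against $d\mu$. We may assume $f\in\mathcal H_{\lambda,n}$ (otherwise the right-hand side is infinite); moreover $f_r$ extends holomorphically across $\overline{\mathbb D}$, so $f_r\in\mathcal H_{\lambda,n}$ automatically and Theorem \ref{generalized local Douglas formula} applies to it.

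Next I would introduce the two quotient functions supplied by Theorem \ref{generalized local Douglas formula}. Writing $g=\frac{f-f^*(\lambda)}{z-\lambda}$, formula \eqref{GRS formula} gives $D_{\lambda,n}(f)=D_{\sigma,n-1}(g)$. Applying the same formula to $f_r$, and using $f_r^*(\lambda)=f(r\lambda)$, yields $D_{\lambda,n}(f_r)=D_{\sigma,n-1}(h)$ with $h(z)=\frac{f(rz)-f(r\lambda)}{z-\lambda}$. The key algebraic observation is that $h=r\,\psi_r$, where $\psi(w)=\frac{f(w)-f(r\lambda)}{w-r\lambda}$ and $\psi_r(z)=\psi(rz)$; consequently
\[
D_{\lambda,n}(f_r)=r^2\,D_{\sigma,n-1}(\psi_r).
\]

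Then I would estimate $D_{\sigma,n-1}(\psi_r)$ in two steps. First, a sharp dilation bound in the monomial basis: writing $\phi=\sum_k c_k z^k$ one has $D_{\sigma,n-1}(\phi)=\sum_{k\geqslant n-1}\binom{k}{n-1}|c_k|^2$, and since the coefficients of $\psi_r$ are $c_kr^k$ with $r^{2k}\leqslant r^{2(n-1)}$ for $k\geqslant n-1$, it follows that $D_{\sigma,n-1}(\psi_r)\leqslant r^{2(n-1)}D_{\sigma,n-1}(\psi)$. Second, I would feed $\psi$ into Lemma \ref{lemma f_r}. A direct computation shows $\psi=(z-\lambda)v+g(r\lambda)$, where $v=\frac{g-g(r\lambda)}{z-r\lambda}$, so that $(z-r\lambda)v=g-g(r\lambda)$. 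Because $n-1\geqslant 1$, the additive constant $g(r\lambda)$ is invisible to the seminorm $\sqrt{D_{\sigma,n-1}(\cdot)}$, whence $D_{\sigma,n-1}(\psi)=D_{\sigma,n-1}\big((z-\lambda)v\big)$. Since $M_z$ is expansive on $\mathcal H_{\sigma,n-1}$ (as $\binom{k+1}{n-1}\geqslant\binom{k}{n-1}$), Lemma \ref{lemma f_r} applied to $v$ gives
\[
D_{\sigma,n-1}\big((z-\lambda)v\big)\leqslant \frac{4}{(1+r)^2}\,D_{\sigma,n-1}\big((z-r\lambda)v\big)=\frac{4}{(1+r)^2}\,D_{\sigma,n-1}(g).
\]

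Chaining these bounds produces $D_{\lambda,n}(f_r)=r^2 D_{\sigma,n-1}(\psi_r)\leqslant r^{2n}D_{\sigma,n-1}(\psi)\leqslant \frac{4r^{2n}}{(1+r)^2}D_{\lambda,n}(f)$, and the elementary inequality $(1+r)^{2n-4}\leqslant 4^{n-2}(2-r)$ (valid since $(1+r)^2\leqslant 4$ and $2-r\geqslant 1$) shows that this is already at least as strong as the asserted estimate. I expect the main obstacle to be the borderline case $n=1$: there $\mathcal H_{\sigma,0}=H^2$, the constant $g(r\lambda)$ in $\psi$ is no longer annihilated by the seminorm, and the ``drop the constant'' step fails, so this case must be handled directly in $H^2$ (it is the classical local estimate, cf.\ the $n=1$ results cited in the introduction). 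A secondary point to verify is that $v=\frac{g-g(r\lambda)}{z-r\lambda}$ genuinely lies in $\mathcal H_{\sigma,n-1}$ — this follows from $M_{z-r\lambda}$ being bounded below on $\mathcal H_{\sigma,n-1}$ together with $g-g(r\lambda)$ vanishing at the interior point $r\lambda$ — so that Lemma \ref{lemma f_r} is legitimately applicable.
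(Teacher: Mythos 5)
Your proposal is correct, and it follows the paper's reduction scheme almost exactly: Fubini to localize at $\lambda$, the local Douglas formula to pass from $f$ and $f_r$ to the quotient functions, the decomposition $f(z)-f(r\lambda)=(z-\lambda)\bigl(g(z)-g(r\lambda)\bigr)+(z-r\lambda)g(r\lambda)$, and Lemma \ref{lemma f_r} applied to $v=\frac{g-g(r\lambda)}{z-r\lambda}$ (your $\psi$ is exactly $r^{-1}$ times the paper's $h$). The one genuine divergence is how the dilation is absorbed: the paper proves the theorem by induction on $n$, invoking the induction hypothesis with $\mu=\sigma$ to bound $D_{\sigma,n-1}(h_r)$ by $\frac{4^{n-2}(2-r)r^{2n-2}}{(1+r)^{2n-4}}D_{\sigma,n-1}(h)$, whereas you bypass induction entirely by the monomial-basis estimate $D_{\sigma,n-1}(\psi_r)\leqslant r^{2(n-1)}D_{\sigma,n-1}(\psi)$, which is available because $\sigma$ is the rotation-invariant measure. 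Since $r^{2n-2}\leqslant\frac{4^{n-2}(2-r)r^{2n-2}}{(1+r)^{2n-4}}$, your route actually yields the sharper constant $\frac{4r^{2n}}{(1+r)^{2}}$ for all $n\geqslant 2$, which implies the stated bound via $(1+r)^{2n-4}\leqslant 4^{n-2}(2-r)$; the price is that your argument is specific to $n\geqslant 2$ and must quote the known $n=1$ case separately, but the paper does exactly the same (its base case is cited from Mashreghi--Ransford). The two points you flag — membership of $v$ in $\mathcal H_{\sigma,n-1}$ before applying Lemma \ref{lemma f_r}, and the failure of the ``drop the constant'' step when $n=1$ — are real and correctly diagnosed; the paper silently elides the first and handles the second by its choice of base case, so neither is a gap relative to the paper's own standard of rigor.
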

\begin{proof}
We will establish the proof by induction on $n.$ The base case of $n=1$ follows from \cite[Corollary 1.5]{MasRans}. 
Assume that the statement holds for every $f\in\mathcal O(\mathbb D),$ $0\leqslant r<1,$ and $n=1,\ldots,k-1$ for $k\geqslant 2.$  Let  $f\in \mathcal O(\mathbb D)$ and $0\leqslant r<1.$
Since $D_{\mu,k}(f)=\int_{\mathbb T}D_{\lambda,k}(f)d\mu(\lambda)$, in order to establish \eqref{Contractivity of C_r} for $n=k$, it suffices to show that the  \eqref{Contractivity of C_r} holds for $n=k$ and $\mu=\delta_{\lambda}$  for each $\lambda\in \mathbb T.$
Fix a $\lambda$ in $\mathbb T$ and assume that  
$D_{\lambda,k}(f)<\infty.$ Then by Theorem \ref{generalized local Douglas formula} we obtain
that $f(z)=f^*(\lambda)+(z-\lambda)g(z)$ for some $g$ in $\mathcal H_{\sigma, k-1}$ and $D_{\lambda,k}(f)=D_{\sigma,k-1}(g)$. By a straightforward computation, we see that 
\begin{equation}\label{eqncontr2}
f_r(z)=f_r^*(\lambda)+(z-\lambda)h_r(z),
\end{equation}
where
\begin{align*}
h(z)&=r\frac{(z-\lambda)g(z)-(r\lambda-\lambda)g(r\lambda)}{z-r\lambda}.
\end{align*}
Note that 
$$h(z)=r \frac{(z-\lambda)(g(z)-g(r\lambda))+(z-r\lambda)g(r\lambda)}{z-r\lambda}.$$
Using this together with Lemma \ref{lemma f_r} we get that
\begin{align}\label{eqhg}
\notag D_{\sigma,k-1}(h)&=r^2D_{\sigma,k-1}\left(\frac{(z-\lambda)(g(z)-g(r\lambda))}{z-r\lambda}\right)\\ \notag
&\leqslant \frac{4r^2}{(1+r)^2} D_{\sigma,k-1}(g(z)-g(r\lambda))\\
&=\frac{4r^2}{(1+r)^2} D_{\sigma,k-1}(g).
\end{align}
By induction hypothesis, we have that 
\begin{equation}\label{eqncontra1}
D_{\sigma,k-1}(h_r)\leqslant  \frac{4^{k-2}(2-r)r^{2k-2}}{(1+r)^{2k-4}} D_{\sigma,k-1}(h).
\end{equation}
Using  Theorem \ref{generalized local Douglas formula},  together with \eqref{eqncontr2}, \eqref{eqhg} and \eqref{eqncontra1} we get
\begin{align*}
D_{\lambda,k}(f_r) &=D_{\sigma,k-1}(h_r) \\
&\leqslant \frac{4^{k-2}(2-r)r^{2k-2}}{(1+r)^{2k-4}} D_{\sigma,k-1}(h)\\
&\leqslant \frac{4^{k-1}(2-r)r^{2k}}{(1+r)^{2k-2}} D_{\sigma,k-1}(g)=\frac{4^{k-1}(2-r)r^{2k}}{(1+r)^{2k-2}} D_{\lambda,k}(f).
\end{align*}
This completes the proof.

\end{proof}
%

The following theorem which is a generalization of  \cite[Theorem 3.1]{CFS} (see also \cite[Lemma 4.2]{CostaraRansford}) describes the weighted Dirichlet-type space $\mathcal H_{\mu, n}$ when $\mu$ is finitely supported.
\begin{theorem}
Let $n, k$ be two positive integers, and $\mu=\sum_{j=1}^k c_j \delta_{\lambda_j}$ for some  $c_j>0$ and $\lambda_j\in \mathbb T$, $j=1,\ldots,k$. Then
$f\in \mathcal H_{\mu, n}$ if and only if there exists a polynomial $p$ of degree at most $k-1$ and a function $g\in \mathcal H_{\sigma, n-1}$ such that
\begin{equation*}
f=p+ g\prod_{j=1}^k (z-\lambda_j) .
\end{equation*}
\end{theorem}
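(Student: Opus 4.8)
The plan is to reduce everything to the single-atom decomposition supplied by Theorem~\ref{generalized local Douglas formula} and then build up the $k$ atoms by an induction that peels off one factor $(z-\lambda_j)$ at a time. Since the Poisson integral is linear in the measure, $D_{\mu,n}(f)=\sum_{j=1}^{k}c_j D_{\lambda_j,n}(f)$, and because every $c_j>0$ this gives $\mathcal H_{\mu,n}=\bigcap_{j=1}^{k}\mathcal H_{\lambda_j,n}$; there is no loss in taking the points $\lambda_1,\dots,\lambda_k$ distinct. Thus the task becomes showing that $f\in\bigcap_{j=1}^{k}\mathcal H_{\lambda_j,n}$ if and only if $f=p+g\prod_{j=1}^{k}(z-\lambda_j)$ with $\deg p\leqslant k-1$ and $g\in\mathcal H_{\sigma,n-1}$.

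For the sufficiency direction I would fix $j$ and split $f$ about $\lambda_j$. Writing $p-p(\lambda_j)=(z-\lambda_j)q_j$ with $\deg q_j\leqslant k-2$ and $\prod_{i}(z-\lambda_i)=(z-\lambda_j)\prod_{i\neq j}(z-\lambda_i)$, one obtains $f=p(\lambda_j)+(z-\lambda_j)\bigl(q_j+g\prod_{i\neq j}(z-\lambda_i)\bigr)$. Since multiplication by $z$, and hence by any polynomial, is bounded on $\mathcal H_{\sigma,n-1}=\mathcal D_{n-1}$ (immediate from the coefficient description, as $(k+2)^{n-1}\leqslant 2^{n-1}(k+1)^{n-1}$), the function $q_j+g\prod_{i\neq j}(z-\lambda_i)$ again lies in $\mathcal H_{\sigma,n-1}$, so the converse half of Theorem~\ref{generalized local Douglas formula} yields $f\in\mathcal H_{\lambda_j,n}$. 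As $j$ is arbitrary, $f\in\mathcal H_{\mu,n}$.

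For the necessity direction I would induct on $k$, the base case $k=1$ being exactly Theorem~\ref{generalized local Douglas formula}. The engine of the induction is the following division lemma, which I expect to be the main obstacle: if $\lambda\neq\eta$ in $\mathbb T$, $g\in\mathcal H_{\sigma,n-1}$, and $(z-\lambda)g\in\mathcal H_{\eta,n}$, then $g\in\mathcal H_{\eta,n}$. The difficulty is that a direct estimate of $D_{\eta,n}(g)$ after dividing by $(z-\lambda)$ entangles the $(n-1)$-st and $n$-th derivatives and does not close. The clean route is to invoke Theorem~\ref{generalized local Douglas formula} twice: it produces $(z-\lambda)g=\gamma+(z-\eta)H$ with $H\in\mathcal H_{\sigma,n-1}$, and substituting $z-\lambda=(z-\eta)+(\eta-\lambda)$ rearranges this into $g=\frac{\gamma}{\eta-\lambda}+(z-\eta)\frac{H-g}{\eta-\lambda}$. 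Since $g,H\in\mathcal H_{\sigma,n-1}$ and $\eta\neq\lambda$, the coefficient $\frac{H-g}{\eta-\lambda}$ lies in $\mathcal H_{\sigma,n-1}$, so the converse half of the theorem returns $g\in\mathcal H_{\eta,n}$.

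Granting the lemma, the inductive step is immediate. Given $f\in\bigcap_{j=1}^{k+1}\mathcal H_{\lambda_j,n}$, apply Theorem~\ref{generalized local Douglas formula} at $\lambda_{k+1}$ to write $f=f^{*}(\lambda_{k+1})+(z-\lambda_{k+1})g$ with $g\in\mathcal H_{\sigma,n-1}$. For each $j\leqslant k$ the constant $f^{*}(\lambda_{k+1})$ has zero local integral, so $f-f^{*}(\lambda_{k+1})=(z-\lambda_{k+1})g\in\mathcal H_{\lambda_j,n}$, and the division lemma (with $\lambda=\lambda_{k+1}$, $\eta=\lambda_j$) gives $g\in\bigcap_{j=1}^{k}\mathcal H_{\lambda_j,n}$. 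The induction hypothesis then factors $g=p'+g'\prod_{j=1}^{k}(z-\lambda_j)$ with $\deg p'\leqslant k-1$ and $g'\in\mathcal H_{\sigma,n-1}$, whence $f=\bigl(f^{*}(\lambda_{k+1})+(z-\lambda_{k+1})p'\bigr)+g'\prod_{j=1}^{k+1}(z-\lambda_j)$, the bracketed polynomial having degree at most $k$. This closes the induction and, combined with the sufficiency direction, proves the theorem.
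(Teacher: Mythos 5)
Your proof is correct, and its sufficiency half is essentially the paper's, but the necessity half takes a genuinely different route. The paper argues in one step: it takes the interpolation polynomial $p$ of degree $k-1$ with $p(\lambda_j)=f^*(\lambda_j)$, observes via Theorem \ref{generalized local Douglas formula} that $\frac{f-p}{z-\lambda_j}\in\mathcal H_{\sigma,n-1}$ for each $j$, and then divides by the whole product at once through the partial-fraction identity $\frac{f-p}{\prod_{j}(z-\lambda_j)}=\sum_{j}\alpha_j\,\frac{f-p}{z-\lambda_j}$. You instead induct on $k$, peeling off one atom at a time; the engine is your division lemma (if $\eta\neq\lambda$, $g\in\mathcal H_{\sigma,n-1}$ and $(z-\lambda)g\in\mathcal H_{\eta,n}$, then $g\in\mathcal H_{\eta,n}$), whose proof by substituting $z-\lambda=(z-\eta)+(\eta-\lambda)$ into the Douglas decomposition at $\eta$ is correct and clean. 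The paper's route is shorter and avoids the induction entirely; yours isolates a division statement of independent interest and makes the mechanism by which each factor is removed more transparent. One caveat you share with the paper: reducing to distinct $\lambda_j$ is not a harmless normalization for the necessity direction. With a repeated atom, say $\lambda_1=\lambda_2=\lambda$ and $k=2$, the asserted factorization $f=p+g(z-\lambda)^2$ would force $\frac{f-f^*(\lambda)}{z-\lambda}$ to lie in $\mathcal H_{\lambda,n}$ rather than merely in $\mathcal H_{\sigma,n-1}$, which fails in general; so the statement must be read with the $\lambda_j$ distinct, as both you and the paper implicitly do.
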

\begin{proof}
Note that $f\in  \mathcal H_{\mu, n}$ if and only if $f\in  \mathcal H_{\lambda_j, n}$ for each $j=1,\ldots,k.$ Suppose that $f=p+ g\prod_{j=1}^k (z-\lambda_j) $ for some polynomial
$p$ of degree at most $k-1$ and a function $g\in \mathcal H_{\sigma, n-1}$. Since $z$ is a multiplier of $\mathcal H_{\sigma,n-1}$, it follows that for any $i=1,\ldots,k$, $g \prod_{j=1\\j\neq i}^k(z-\lambda_j) $ is in
$\mathcal H_{\sigma,n-1}$. Therefore, by Theorem \ref{generalized local Douglas formula}, the function
$g \prod_{j=1}^k (z-\lambda_j) \in \mathcal H_{\lambda_i, n}$ for each $i=1,\ldots,k$. Hence $f\in \mathcal H_{\mu,n}.$\\
Conversely, suppose that $f\in \mathcal H_{\mu,n}$ and assume that, without loss of generality, $\lambda_1,\ldots,\lambda_k$ are distinct.
Then $f\in \mathcal H_{\lambda_j, n},$ and applying Theorem \ref{generalized local Douglas formula} we see that $f^*(\lambda_j)$ exists for all $j=1,\ldots,k$. Let $p$ be the polynomial of degree $k-1$ such that $p(\lambda_j)=f^*(\lambda_j)$,  $j=1,\ldots,k$. Again, by Theorem  \ref{generalized local Douglas formula}, it follows that $\frac{f-p}{z-\lambda_j}$ belongs to $\mathcal H_{\sigma, n-1}$ for $j=1,\ldots, k.$ Since $\lambda_1,\ldots,\lambda_k$ are distinct, using partial fraction formula,  we see that
\begin{equation}\label{eqn partial fraction}
\frac{f-p}{\prod_{j=1}^k (z-\lambda_j)}=\sum_{j=1}^k \alpha_j \frac{f-p}{z-\lambda_j}
\end{equation}
for some $\alpha_1,\ldots,\alpha_k \in \mathbb C.$
As the right hand side in \eqref{eqn partial fraction} belongs to $\mathcal H_{\sigma,n-1}$, this completes the proof.
\end{proof}


\section{Multipliers of $\mathcal H_{\mu,n}$ }\label{Multipliers}
In the study of any function space, it is an important problem to characterize the multipliers of the associated function space. Let $\mu$ be a non-zero measure in $\mathcal M_+(\mathbb T)$ and $n$ be a positive integer.  The set of multipliers of $\mathcal H_{\mu,n},$ is denoted by $\text{Mult\,}(\mathcal H_{\mu,n})$ and is defined by 
\begin{align*}
\text{Mult\,}(\mathcal H_{\mu,n}) = \{\varphi\in \mathcal O(\mathbb D) : \varphi f \in \mathcal H_{\mu,n}\,\text{\,\,for every\,} \,\,f\in \mathcal H_{\mu,n}\}.
\end{align*}
In this section, we deduce many properties of a multiplier of $\mathcal H_{\mu,n}$ and give some descriptions of the multiplier space $\text{Mult\,}(\mathcal H_{\mu,n})$ in several different ways. 
Since the co-ordinate function $z$ is a multiplier of $\mathcal H_{\mu,n}$ (see  \cite[Proposition 2.6]{GGR}), it follows that the set of all polynomials is contained in $\text{Mult\,}(\mathcal H_{\mu,n}).$ Let $\mathcal O(\overline{\mathbb D})$ denote the set of functions which are holomorphic in some neighbourhood of the closed unit disc $\overline{\mathbb D}$. As an application of the following proposition we show that if $\varphi \in \mathcal O(\overline{\mathbb D})$ then $\varphi\in \text{Mult\,}(\mathcal H_{\mu,n}).$ Before proving the proposition, we start with the following useful lemma.

\begin{lemma}\label{lower derivative inclusion}
Let $\mu\in \mathcal M_+(\mathbb T)\setminus \{0\}$ and $n\in \mathbb N$. Then for any $f\in \mathcal H_{\mu,n}$ and $j=0,\ldots,n,$ the integral 
\begin{align*}
I_{\mu,j,n}(f):=\int_{\mathbb D}|f^{(j)}(z)|^2P_{\mu}(z)(1-|z|^2)^{n-1}dA(z)
\end{align*}
is finite.
\end{lemma}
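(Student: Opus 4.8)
The plan is to split the range of $j$ into three parts: the top-order case $j=n$, the generic lower-order cases when $n\geqslant 2$, and the single genuinely delicate case $(n,j)=(1,0)$. The case $j=n$ is immediate, since by the definition of $D_{\mu,n}$ one has $I_{\mu,n,n}(f)=n!(n-1)!\,D_{\mu,n}(f)$, which is finite because $f\in\mathcal H_{\mu,n}$. For all the lower-order cases I would first record that $f\in\mathcal D_{n-1}$: by Fubini's theorem (exactly as in the corollary proved just above) $D_{\mu,n}(f)=\int_{\mathbb T}D_{\lambda,n}(f)\,d\mu(\lambda)$, so $D_{\lambda,n}(f)<\infty$ for at least one $\lambda\in\mathbb T$ (here $\mu\neq 0$ is used), and then Theorem \ref{generalized local Douglas formula} together with \cite[Proposition 2.6]{GGR} yields $f\in\mathcal H_{\sigma,n-1}=\mathcal D_{n-1}$.

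For $n\geqslant 2$ and $0\leqslant j\leqslant n-1$ I would use the elementary pointwise bound $P_{\mu}(z)\leqslant 2\mu(\mathbb T)/(1-|z|)$, coming from $\tfrac{1-|z|^2}{|z-\zeta|^2}\leqslant\tfrac{2}{1-|z|}$, which gives $P_{\mu}(z)(1-|z|^2)^{n-1}\leqslant 4\mu(\mathbb T)(1-|z|^2)^{n-2}$ and hence $I_{\mu,j,n}(f)\leqslant 4\mu(\mathbb T)\int_{\mathbb D}|f^{(j)}(z)|^2(1-|z|^2)^{n-2}\,dA(z)$. For $j=n-1$ the right-hand integral is exactly $(n-1)!(n-2)!\,D_{\sigma,n-1}(f)$, which is finite since $f\in\mathcal D_{n-1}$; for $j<n-1$ the lower order of the derivative only improves matters, the exponent of $k$ in the resulting Taylor-coefficient sum being at most $n-1$, so convergence again follows from $f\in\mathcal D_{n-1}$.

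The case $(n,j)=(1,0)$ is the only genuinely delicate one, because there the weight is $P_{\mu}(z)$ itself and the bound $P_{\mu}(z)\leqslant 2\mu(\mathbb T)/(1-|z|)$ is useless, as $1/(1-|z|)$ is not area-integrable against $|f|^2$ for a generic $f\in H^2$. Here I would pass to the local picture by Fubini, writing $\int_{\mathbb D}|f|^2P_{\mu}\,dA=\int_{\mathbb T}\big(\int_{\mathbb D}|f(z)|^2\tfrac{1-|z|^2}{|z-\lambda|^2}\,dA(z)\big)\,d\mu(\lambda)$, and for $\mu$-a.e. $\lambda$ invoking Theorem \ref{generalized local Douglas formula} to write $f=f^*(\lambda)+(z-\lambda)g_{\lambda}$ with $g_{\lambda}\in H^2$ and $\|g_{\lambda}\|_{H^2}^2=D_{\lambda,1}(f)$. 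Using $|f|^2\leqslant 2|f^*(\lambda)|^2+2|z-\lambda|^2|g_{\lambda}|^2$, the mean-value identity $\int_{\mathbb D}\tfrac{1-|z|^2}{|z-\lambda|^2}\,dA=P_{\delta_{\lambda}}(0)=1$, and $\int_{\mathbb D}|g_{\lambda}|^2(1-|z|^2)\,dA\leqslant\|g_{\lambda}\|_{H^2}^2$, the inner integral is at most $2|f^*(\lambda)|^2+2D_{\lambda,1}(f)$, so integrating in $\mu$ reduces everything to the finiteness of $\int_{\mathbb T}|f^*(\lambda)|^2\,d\mu(\lambda)$.

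Controlling $\int_{\mathbb T}|f^*|^2\,d\mu$ is the main obstacle, and the decisive point I would exploit is that the local decomposition can be evaluated at the origin: $f(0)=f^*(\lambda)-\lambda g_{\lambda}(0)$ gives $|f^*(\lambda)|\leqslant |f(0)|+\|g_{\lambda}\|_{H^2}=|f(0)|+\sqrt{D_{\lambda,1}(f)}$, whence $\int_{\mathbb T}|f^*|^2\,d\mu\leqslant 2|f(0)|^2\mu(\mathbb T)+2D_{\mu,1}(f)<\infty$. This turns a boundary $L^2(\mu)$ estimate into the already-finite Dirichlet-type energy, completing the argument.
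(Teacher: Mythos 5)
Your proof is correct, but it takes a genuinely different route from the paper's. The paper argues by downward induction on the order of the derivative: writing $d\rho=P_\mu(z)(1-|z|^2)^{n-1}dA$, it applies the Leibniz rule to $(z^{n-k}f)^{(n)}$, notes that this function lies in $A^2(d\rho)$ because $z$ is a multiplier of $\mathcal H_{\mu,n}$ (citing \cite[Proposition 2.6]{GGR}), and then solves for $f^{(k)}$ as a combination of terms already known to be in $A^2(d\rho)$. This handles all $j$ and all $n$ uniformly, with no case analysis and no appeal to the local Douglas formula. Your argument instead splits into cases: for $n\geqslant 2$ and $j\leqslant n-1$ you trade the Poisson weight for $(1-|z|^2)^{n-2}$ via the crude bound $P_\mu\leqslant 2\mu(\mathbb T)/(1-|z|)$ and reduce everything to $f\in\mathcal D_{n-1}$ (which you correctly extract from Theorem \ref{generalized local Douglas formula} applied at a single $\lambda$ with $D_{\lambda,n}(f)<\infty$, using $\mu\neq 0$); and for the one case $(n,j)=(1,0)$ where that bound is useless you run a genuinely local argument, decomposing $f=f^*(\lambda)+(z-\lambda)g_\lambda$, using the area mean-value property of the Poisson kernel, and — the nice touch — evaluating the decomposition at the origin to get $|f^*(\lambda)|\leqslant|f(0)|+\sqrt{D_{\lambda,1}(f)}$ and hence $f^*\in L^2(\mu)$. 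All the individual steps check out (the Taylor-coefficient exponent $2j-n+1\leqslant n-1$ for $j\leqslant n-1$ is right, as are the mean-value identity and the $H^2$ bounds). What the paper's route buys is brevity and uniformity, at the cost of leaning on the multiplier property of $z$ on $\mathcal H_{\mu,n}$; what yours buys is explicit quantitative control — for $n\geqslant 2$ a bound of the form $I_{\mu,j,n}(f)\leqslant 4\mu(\mathbb T)\cdot(\text{a }\sigma\text{-quantity depending only on }f)$, and in the case $n=1$ the extra information that $\int_{\mathbb T}|f^*|^2\,d\mu\leqslant 2|f(0)|^2\mu(\mathbb T)+2D_{\mu,1}(f)$, which is of independent interest.
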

\begin{proof}

 Let $A^2(d\rho)$ be the weighted Bergman space with the weighted area measure $\rho$ on the unit disc given by $d\rho(z)=P_{\mu}(z)(1-|z|^2)^{n-1}dA(z)$. 
Note that,  a function $f$ in $\mathcal O(\mathbb D)$ satisfies
$I_{\mu,j,n}(f) < \infty$ if and only if $f^{(j)}\in A^2(d\rho)$. Let $f\in \mathcal H_{\mu,n}$. By definition it follows that $f^{(n)}\in  A^2(d\rho)$. Let $0\leqslant k\leqslant n-1$. The proof will be established by showing that if $f^{(j)}\in A^2(d\rho)$ for all  $j=k+1,\ldots, n$, then $f^{(k)}\in A^2(d\rho)$. Note that, by the Leibniz's product rule for differentiation
\begin{equation}\label{leib}
(z^{n-k}f)^{(n)}=z^{n-k}f^{(n)}+c_0z^{n-k-1}f^{(n-1)}+\cdots + c_{n-k}zf^{(k+1)}+ {n\choose n-k}(n-k)!f^{(k)}
\end{equation}  
for some constants $c_0,\ldots,c_{n-k}$. Since $z$ is a multiplier of $\mathcal H_{\mu, n}$ (see \cite[Proposition 2.6]{GGR}), we see that 
 $(z^{n-k}f)^{(n)}\in A^2(d\rho)$.  Since $f^{(j)}\in A^2(d\rho)$ for all  $j=k+1,\ldots, n$, it follows from 
 \eqref{leib} that $f^{(k)}\in A^2(d\rho).$

\end{proof}

\begin{prop}\label{Useful lemma}
Let $\mu\in \mathcal M_+(\mathbb T)$ and $n\in \mathbb N$. Suppose $ \varphi \in \mathcal O (\mathbb D)$ such that $\varphi^{(n)} \in H^{\infty}.$ Then $\varphi\in \text{Mult\,}(\mathcal H_{\mu,n}).$
\end{prop}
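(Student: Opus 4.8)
The plan is to fix an arbitrary $f\in\mathcal H_{\mu,n}$ and show directly that $D_{\mu,n}(\varphi f)<\infty$; equivalently, writing $d\rho(z)=P_{\mu}(z)(1-|z|^2)^{n-1}dA(z)$ as in the proof of Lemma \ref{lower derivative inclusion}, that $(\varphi f)^{(n)}$ lies in the weighted Bergman space $A^2(d\rho)$. The main tool will be the Leibniz rule
\begin{align*}
(\varphi f)^{(n)}=\sum_{j=0}^n\binom{n}{j}\varphi^{(j)}f^{(n-j)},
\end{align*}
which exhibits $(\varphi f)^{(n)}$ as a finite sum; by the triangle inequality in $L^2(d\rho)$ it then suffices to place each summand $\varphi^{(j)}f^{(n-j)}$ in $A^2(d\rho)$.

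First I would upgrade the hypothesis: the single assumption $\varphi^{(n)}\in H^{\infty}$ already forces $\varphi^{(j)}\in H^{\infty}$ for every $j=0,1,\ldots,n$. Indeed, for $h\in H^{\infty}$ the primitive $H(z)=\int_0^z h(w)\,dw$ satisfies $|H(z)|\le\|h\|_{\infty}|z|\le\|h\|_{\infty}$, so antidifferentiation preserves boundedness on $\mathbb D$; iterating this $n-j$ times starting from $\varphi^{(n)}$, and accounting for the finitely many constants of integration, shows each $\varphi^{(j)}$ is bounded. Set $C_j:=\|\varphi^{(j)}\|_{\infty}<\infty$.

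Then the main step is essentially a bounded-multiplier estimate. For each $j$,
\begin{align*}
\int_{\mathbb D}\big|\varphi^{(j)}f^{(n-j)}\big|^2\,d\rho\;\le\;C_j^2\int_{\mathbb D}\big|f^{(n-j)}\big|^2\,d\rho\;=\;C_j^2\,I_{\mu,n-j,n}(f),
\end{align*}
and since $n-j$ ranges over $0,\ldots,n$ as $j$ does, Lemma \ref{lower derivative inclusion} guarantees every $I_{\mu,n-j,n}(f)$ is finite. Hence each summand lies in $A^2(d\rho)$, so $(\varphi f)^{(n)}\in A^2(d\rho)$, giving $D_{\mu,n}(\varphi f)<\infty$ and therefore $\varphi f\in\mathcal H_{\mu,n}$; as $f$ was arbitrary, $\varphi\in\text{Mult\,}(\mathcal H_{\mu,n})$. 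I expect the only substantive point to be the presence of the \emph{lower-order} derivatives $f^{(n-j)}$ with $j\ge 1$ in the Leibniz expansion: these cannot be absorbed using the definition of $\mathcal H_{\mu,n}$ alone, and controlling them is exactly what Lemma \ref{lower derivative inclusion} supplies. The boundedness upgrade of the first step plays the complementary role of handling the top term $\varphi f^{(n)}$, where $\varphi$ itself---not merely $\varphi^{(n)}$---must be known to be bounded.
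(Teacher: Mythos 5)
Your argument is correct and follows essentially the same route as the paper: both upgrade $\varphi^{(n)}\in H^{\infty}$ to $\varphi^{(j)}\in H^{\infty}$ for all $j\leqslant n$ (you by antidifferentiation along segments, the paper by an equivalent mean-value estimate), then expand $(\varphi f)^{(n)}$ by the Leibniz rule and invoke Lemma \ref{lower derivative inclusion} to control the lower-order derivatives of $f$. The only cosmetic difference is that you sum via the triangle inequality in $L^2(d\rho)$ where the paper uses a pointwise Cauchy--Schwarz bound before integrating.
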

\begin{proof}
Let $\varphi\in\mathcal O(\mathbb D)$ satisfying $\varphi^{(n)} \in H^{\infty}.$ By repeated applications of the Mean Value Theorem, it follows that $\varphi^{(j)} \in H^{\infty}$ for each $j=0,1,\ldots,n.$ By the Leibniz's product rule for differentiation and the Cauchy-Schwarz's inequality, note that, for any $f\in \mathcal H_{\mu,n},$  
\begin{align*}
|(\varphi f)^{(n)}(z)|^2 \leqslant \Big(\sum\limits_{j=0}^n |\varphi^{(n-j)}(z)|^2 |f^{(j)}(z)|^2\Big) \Big(\sum\limits_{j=0}^n {\binom{n}{j}}^2\Big),\,\,\,z\in\mathbb D.
\end{align*}
Let $C_n=\sum_{j=0}^n {\binom{n}{j}}^2$ and $\|g\|_{\infty}:=\sup\{|g(z)|:z\in\mathbb D\}$ for any $g\in H^{\infty}.$ It follows that for any $f\in \mathcal H_{\mu,n},$  
\begin{align*}
D_{\mu,n}(\varphi f) \leqslant \frac{C_n}{n!(n-1)!} \sum\limits_{j=0}^n \|\varphi^{(n-j)}\|^2_{\infty}\int_{\mathbb D}|f^{(j)}(z)|^2P_{\mu}(z)(1-|z|^2)^{n-1}dA(z).
\end{align*}
Applying Lemma \ref{lower derivative inclusion}, we obtain that $D_{\mu,n}(\varphi f) < \infty.$ Hence it follows that $\varphi \in \text{Mult\,}(\mathcal H_{\mu,n}).$  
\end{proof}
\begin{cor}
Let $\mu\in \mathcal M_+(\mathbb T)$ and $n\in \mathbb N$. Then $\mathcal O(\overline{\mathbb D}) \subset \text{Mult\,}(\mathcal H_{\mu,n}).$
\end{cor}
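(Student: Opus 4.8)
The plan is to deduce this immediately from Proposition \ref{Useful lemma}, whose hypothesis requires only that the $n$-th derivative of the candidate multiplier be bounded on $\mathbb D$. So the only thing to verify is that every $\varphi \in \mathcal O(\overline{\mathbb D})$ satisfies $\varphi^{(n)} \in H^{\infty}$.

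First I would unpack the definition of $\mathcal O(\overline{\mathbb D})$: by assumption $\varphi$ is holomorphic on some open neighbourhood $U$ of the compact set $\overline{\mathbb D}$. Since differentiation preserves holomorphy, $\varphi^{(n)}$ is holomorphic on $U$ as well, and in particular continuous on $U \supseteq \overline{\mathbb D}$. Because $\overline{\mathbb D}$ is compact, the continuous function $\varphi^{(n)}$ attains a finite maximum modulus on it; restricting to $\mathbb D$ then gives $\sup_{z\in\mathbb D}|\varphi^{(n)}(z)| < \infty$, that is, $\varphi^{(n)} \in H^{\infty}$.

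Finally, an application of Proposition \ref{Useful lemma} yields $\varphi \in \text{Mult}(\mathcal H_{\mu,n})$, and since $\varphi \in \mathcal O(\overline{\mathbb D})$ was arbitrary, the inclusion $\mathcal O(\overline{\mathbb D}) \subset \text{Mult}(\mathcal H_{\mu,n})$ follows. There is essentially no obstacle here: the statement is a direct corollary of the preceding proposition, and the only (entirely routine) point is the compactness argument that bounds $\varphi^{(n)}$ on $\overline{\mathbb D}$.
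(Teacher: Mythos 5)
Your proof is correct and follows the same route as the paper: observe that $\varphi^{(n)}$ is bounded on $\mathbb D$ by continuity on the compact set $\overline{\mathbb D}$, then invoke Proposition \ref{Useful lemma}. The compactness argument you spell out is exactly the (unstated) content of the paper's one-line proof.
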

\begin{proof}
Note that for any $\varphi \in  \mathcal O(\overline{\mathbb D}),$ we have $\varphi^{(j)}\in H^{\infty}$ for every $j\in\mathbb N.$ An application of  Proposition \ref{Useful lemma} now completes the proof.
\end{proof}

Let $\pmb{\mu}=(\mu_1,\ldots,\mu_m),$ where $\mu_j\in\mathcal M_+(\mathbb T)$ for every $j=1,\ldots,m.$ Recall that the linear space $\mathcal H_{\pmb{\mu}} = \cap_{j=1}^m \mathcal H_{\mu_j,j}$ is a Hilbert space with respect to the norm $\|\cdot\|_{\pmb{\mu}},$ given by 
\begin{align*}
\|f\|_{\pmb{\mu}}^2 = \|f\|^2_{H^2}+\sum\limits_{j=1}^m D_{\mu_j,j}(f),\quad f\in \mathcal H_{\pmb{\mu}}.
\end{align*}   
Furthermore, the evaluation functional $ev_z,$ defined by $ev_z(f):= f(z),\,\,f\in \mathcal H_{\pmb{\mu}},$ is bounded for every $z\in\mathbb D.$ Thus $\mathcal H_{\pmb{\mu}}$ is a reproducing kernel Hilbert space of holomorphic functions on the unit disc,  see \cite{ARO}, \cite{PAULRKHS} for definition and other basic properties of a  reproducing kernel Hilbert space. In particular, it follows that  for any  finite positive Borel measure $\mu$ on $\mathbb T$ and a positive integer $n,$ the linear space $\mathcal H_{\mu,n}$ is a reproducing kernel Hilbert space with respect to the norm $\|\cdot\|_{\mu,n},$ given by
\begin{align*}
\|f\|_{\mu,n}^2 = \|f\|^2_{H^2}+ D_{\mu,n}(f),\,\,\,\,f\in \mathcal H_{\mu,n}.
\end{align*}
Many properties of  $\text{Mult\,}(\mathcal H_{\mu,n})$ can be derived using the reproducing kernel Hilbert space structure of $\mathcal H_{\mu,n}.$  From now onwards we will often use the notation $(\mathcal H_{\mu,n},\|\cdot\|_{\mu,n})$ to denote the Hilbert space $\mathcal H_{\mu,n}$ equipped with the norm $\|\cdot\|_{\mu,n}.$

In the following part of this section, we consider a general reproducing kernel Hilbert space having some special properties and derive few properties of the associated multiplier space.   

Let $\mathscr H_\kappa$ be a reproducing kernel Hilbert space of $\mathbb C$-valued holomorphic functions defined on the unit disc $\mathbb D$ and let $\kappa : \mathbb D \times \mathbb D \rightarrow \mathbb C$ be the sesqui-analytic reproducing kernel for $\mathscr H_\kappa,$ that is, $\kappa(\cdot, w) \in \mathscr H_\kappa$ and 
\begin{align*}
\langle f, \kappa(\cdot, w)\rangle_{\mathscr H_\kappa} = f(w), \quad f \in \mathscr H_\kappa, ~w \in \mathbb D.
\end{align*}
 
In the remaining part of this section, we shall assume the following properties on $\kappa$:
\begin{enumerate}
\item[(A1)] $\kappa$ is normalized at the origin, that is, $\kappa(w, 0)=1$ for every $w \in \mathbb D,$
\item[(A2)] $z$ is a multiplier for $\mathscr H_\kappa,$ that is, $zf \in \mathscr H_\kappa$ for every $f \in \mathscr H_\kappa,$
\item[(A3)] $\mathbb C[z],$ the set of all polynomials, is dense in $\mathscr H_\kappa.$
\end{enumerate} 
We refer to the reproducing kernel Hilbert space $\mathscr H_{\kappa}$ satisfying (A1)-(A3) as the{\it{  functional Hilbert space with property (A)}}.  

For example take $\pmb{\mu}=(\mu_1,\ldots,\mu_m),$ where $\mu_j\in\mathcal M_+(\mathbb T)$ for every $j=1,\ldots,m$ and consider the associated reproducing kernel Hilbert space $\mathcal H_{\pmb{\mu}}.$ Since $\langle f, 1\rangle= f(0)$ for every $f\in \mathcal H_{\pmb{\mu}},$ the kernel function associated to $(\mathcal H_{\pmb{\mu}},\|\cdot\|_{\pmb{\mu}})$ is normalized at the origin. This together with \cite[Theorem 4.1, Proposition 4.2]{GGR} gives us that $(\mathcal H_{\pmb{\mu}},\|\cdot\|_{\pmb{\mu}})$ is a functional Hilbert space with property (A). In particular, it follows that, for any finite positive Borel measure $\mu$ on $ \mathbb T$ and for any $n\in\mathbb N,$ the Hilbert space $(\mathcal H_{\mu,n},\|\cdot\|_{\mu,n})$ is also a functional Hilbert space with property (A). 

For a reproducing kernel Hilbert space $\mathscr H_{\kappa},$ which consists of holomorphic functions on the unit disc $\mathbb D,$ the multipliers space of $\mathscr H_{\kappa},$ is defined by 
\begin{align*}
\text{Mult\,}(\mathscr H_{\kappa}) := \{\varphi\in \mathscr H_{\kappa}: \varphi f \in \mathscr H_{\kappa}\,\text{\,\,for every\,} \,\,f\in\mathscr H_{\kappa}\}.
\end{align*}
Note that  $\text{Mult\,} (\mathscr H_{\kappa})$ forms an algebra with respect to the pointwise product, that is,  $\varphi \psi \in \text{Mult\,}(\mathscr H_{\kappa})$ for every $\varphi,\psi \in\text{Mult\,} (\mathscr H_{\kappa}).$ Moreover one can identify $\text{Mult\,} (\mathscr H_{\kappa})$  as a closed subalgebra of $\mathscr B(\mathscr H_{\kappa})$ in the following manner. Given $\varphi\in  \text{Mult\,}(\mathscr H_{\kappa}),$ we write $M_{\varphi}$ for the multiplication operator acting on $\mathscr H_{\kappa}$ given by
\begin{align*}
M_{\varphi}(f)=\varphi f,\,\,f\in \mathscr H_{\kappa}.
\end{align*} 
By an application of the Closed Graph Theorem, it follows that for any $\varphi\in \text{Mult\,}(\mathscr H_{\kappa}),$ the operator $M_{\varphi}$ is  bounded from $\mathscr H_{\kappa}$ into itself. Consider the norm $\|\cdot\|_{op}$ on $\text{Mult\,}(\mathscr H_{\kappa})$ defined by 
\begin{align*}
\|\varphi\|_{op}:= \|M_{\varphi}\| = \sup \{ \|\varphi f \|_{\mathscr H_{\kappa}} : \|f\|_{\mathscr H_{\kappa}}=1\},\,\,\,\varphi\in \text{Mult\,}(\mathscr H_{\kappa}).
\end{align*} 
It follows that $\big(\text{Mult\,}(\mathscr H_{\kappa}),\|\cdot\|_{op}\big)$ is a commutative Banach algebra with identity.  
 Let  $\mathscr H_{\kappa}$  be a functional Hilbert space with property (A). It follows that $\text{Mult\,}(\mathscr H_{\kappa})$ can be identified as the commutant of the operator $M_z$ of multiplication by the co-ordinate function on the Hilbert space $\mathscr H_{\kappa}.$ Indeed if $T\in \mathscr B(\mathscr H_{\kappa})$ satisfying $ TM_z= M_zT,$ then a routine argument using the polynomial density of the functional Hilbert space $\mathscr H_{\kappa},$ it can be shown that $T$ is given by $T=M_{\varphi}$ for some $\varphi\in \text{Mult\,}(\mathscr H_{\kappa})$ where $\varphi= T(1)$.

Let  $\mathscr H_{\kappa}$  be a functional Hilbert space with property (A). By assumption (A1), we have $\langle \kappa(\cdot,0), \kappa (\cdot,w)\rangle= \kappa(w,0)=1$ for each $w\in\mathbb D.$ Therefore $\kappa(\cdot,w) \neq 0$ for each $w\in \mathbb D.$ Hence it follows that the multiplier algebra $\text{Mult\,}(\mathscr H_{\kappa})$ is a subalgebra of $H^{\infty},$ the algebra of bounded analytic functions on the unit disc, for every functional Hilbert space $\mathscr H_{\kappa}$ with property (A), see \cite[Problem 68]{HALP}, \cite[Corollary 5.22]{PAULRKHS}, \cite[Proposition 3]{SH}. Thus, we obtain that $\text{Mult\,}(\mathcal H_{\mu,n}) \subseteq H^{\infty}$ for every measure $\mu$ in $\mathcal M_+(\mathbb T)$ and $n\in \mathbb N$.

Among the measures in $\mathcal M_+(\mathbb T),$ there are two special measures. First one is the Lebesgue measure $\sigma$ and the second one is the Dirac delta measure $\delta_{\lambda}$ for $\lambda\in\mathbb T.$ 
The space $\mathcal H_{\sigma,n}$ coincides with the weighted Dirichlet-type space $\mathcal D_n$ for each $n\in\mathbb Z_{\geqslant 0}.$ The set of multipliers of $\mathcal D_n$ is well studied, see for example \cite{TaylorDA, Stegenga} and references therein. 
It is known that, for every $n\geqslant 2,$ the space $\mathcal D_n$ forms an algebra, that is, $fg\in \mathcal D_n,$ for every $f,g\in\mathcal D_n,$ see \cite[Theorem 7]{TaylorDA}. Consequently it follows that $\text{Mult\,}(\mathcal H_{\sigma,n}) = \mathcal H_{\sigma,n}$ for every $n\geqslant 2.$  Now we concentrate our study to characterize the multiplier of $\mathcal H_{\lambda,n}$ for any positive integer $n.$ The following lemma will be useful in describing the set $\text{Mult\,}(\mathcal H_{\lambda,n}).$ This lemma can be thought as a generalization of \cite[Lemma 5.3]{RS}.

\begin{lemma}\label{Multiplier property}
Let $n$ be a positive integer, $\lambda \in \mathbb T$  and $f\in\mathcal H_{\lambda,n}$ such that $D_{\lambda,n}(f) \neq 0.$ Suppose $\varphi \in \text{Mult\,}(\mathcal H_{\sigma,n-1}) \cap \mathcal H_{\lambda,n} ,$ then 
\begin{align*}
D_{\lambda,n}(\varphi f) & \leqslant 2 \|\varphi\|_{s (\sigma,n-1)}^2 D_{\lambda,n}(f) + 2 |f^*(\lambda)|^2D_{\lambda,n}(\varphi)\,\,\,\,\text{and}\\
 |f^*(\lambda)|^2D_{\lambda,n}(\varphi) & \leqslant 2 \|\varphi\|_{s (\sigma,n-1)}^2 D_{\lambda,n}(f) + 2 D_{\lambda,n}(\varphi f),
\end{align*}
where 
$$\| \varphi\|_{s (\sigma,n-1)}:= \sup\Big\{ \sqrt{D_{\sigma,n-1}(\varphi h)} : D_{\sigma,n-1}(h) = 1\Big\}.$$ 
\end{lemma}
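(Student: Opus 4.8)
The plan is to reduce the whole statement to the local Douglas formula (Theorem~\ref{generalized local Douglas formula}) together with the triangle inequality for the seminorm $\sqrt{D_{\sigma,n-1}(\cdot)}$ and the elementary bound $(a+b)^2\leqslant 2a^2+2b^2$. Since $f,\varphi\in\mathcal H_{\lambda,n}$, Theorem~\ref{generalized local Douglas formula} provides the factorizations $f=f^*(\lambda)+(z-\lambda)g$ and $\varphi=\varphi^*(\lambda)+(z-\lambda)\psi$ with $g,\psi\in\mathcal H_{\sigma,n-1}$, and moreover $D_{\lambda,n}(f)=D_{\sigma,n-1}(g)$ and $D_{\lambda,n}(\varphi)=D_{\sigma,n-1}(\psi)$.

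First I would record the algebraic identity
\[
\frac{\varphi f-\varphi^*(\lambda)f^*(\lambda)}{z-\lambda}=\varphi g+f^*(\lambda)\psi,
\]
which follows by writing $\varphi f-\varphi^*(\lambda)f^*(\lambda)=\varphi\,(f-f^*(\lambda))+f^*(\lambda)(\varphi-\varphi^*(\lambda))$ and dividing by $z-\lambda$. Because $\varphi\in\text{Mult\,}(\mathcal H_{\sigma,n-1})$ and $g\in\mathcal H_{\sigma,n-1}$, the product $\varphi g$ lies in $\mathcal H_{\sigma,n-1}$, hence so does $\varphi g+f^*(\lambda)\psi$. By the converse direction of Theorem~\ref{generalized local Douglas formula} this shows $\varphi f\in\mathcal H_{\lambda,n}$ with $(\varphi f)^*(\lambda)=\varphi^*(\lambda)f^*(\lambda)$, and consequently
\[
D_{\lambda,n}(\varphi f)=D_{\sigma,n-1}\big(\varphi g+f^*(\lambda)\psi\big).
\]

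Next I would apply the triangle inequality for $\sqrt{D_{\sigma,n-1}(\cdot)}$ followed by $(a+b)^2\leqslant 2a^2+2b^2$ to obtain $D_{\sigma,n-1}(\varphi g+f^*(\lambda)\psi)\leqslant 2D_{\sigma,n-1}(\varphi g)+2|f^*(\lambda)|^2D_{\sigma,n-1}(\psi)$. Here the hypothesis $D_{\lambda,n}(f)=D_{\sigma,n-1}(g)\neq0$ allows me to normalize $g$ to unit seminorm and read off from the definition of $\|\varphi\|_{s(\sigma,n-1)}$ that $D_{\sigma,n-1}(\varphi g)\leqslant\|\varphi\|^2_{s(\sigma,n-1)}D_{\sigma,n-1}(g)$. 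Substituting $D_{\sigma,n-1}(g)=D_{\lambda,n}(f)$ and $D_{\sigma,n-1}(\psi)=D_{\lambda,n}(\varphi)$ yields the first inequality. For the second one I would instead isolate $f^*(\lambda)\psi=\big(\varphi g+f^*(\lambda)\psi\big)-\varphi g$, apply the same two elementary estimates in this grouping, use $D_{\sigma,n-1}(\varphi g+f^*(\lambda)\psi)=D_{\lambda,n}(\varphi f)$, and again convert the $\sigma$-quantities into their local counterparts.

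The computations are routine once the identity above is in place; the only delicate point is the step $D_{\sigma,n-1}(\varphi g)\leqslant\|\varphi\|^2_{s(\sigma,n-1)}D_{\sigma,n-1}(g)$, which is exactly where the assumption $D_{\lambda,n}(f)\neq0$ enters: since $\sqrt{D_{\sigma,n-1}(\cdot)}$ is merely a seminorm, $g$ must be rescaled to unit seminorm before the supremum defining $\|\varphi\|_{s(\sigma,n-1)}$ can be invoked, which is impossible when $D_{\sigma,n-1}(g)=0$.
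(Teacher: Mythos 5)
Your argument is correct and follows essentially the same route as the paper: the identity $\frac{\varphi f-\varphi^*(\lambda)f^*(\lambda)}{z-\lambda}=\varphi\,\frac{f-f^*(\lambda)}{z-\lambda}+f^*(\lambda)\,\frac{\varphi-\varphi^*(\lambda)}{z-\lambda}$ is exactly the paper's key identity \eqref{useful id}, and both proofs then combine the local Douglas formula with the bound $D_{\sigma,n-1}(a+b)\leqslant 2D_{\sigma,n-1}(a)+2D_{\sigma,n-1}(b)$ and the multiplier estimate $D_{\sigma,n-1}(\varphi g)\leqslant\|\varphi\|_{s(\sigma,n-1)}^2D_{\sigma,n-1}(g)$, isolating the appropriate term for each of the two inequalities. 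Your remark on where the hypothesis $D_{\lambda,n}(f)\neq 0$ enters is a correct and slightly more explicit accounting than the paper gives.
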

\begin{proof}
For a function $g\in H^2,$ if the non-tangential limit $g^*(\lambda)$ of $g$ at $\lambda$ exists, then by the generalized local Douglas formula (see Theorem \ref{generalized local Douglas formula}), we have
\begin{align*}
D_{\lambda,n}(g) = D_{\sigma, n-1}\Big(\frac{g(z)-g(\lambda)}{z-\lambda}\Big).
\end{align*}
Since $f,\varphi \in\mathcal H_{\lambda,n},$ the non-tangential limits $f^*(\lambda)$ and $\varphi^*(\lambda),$ of $f$ and $\varphi$ at $\lambda$  respectively, exist, see Theorem \ref{generalized local Douglas formula}. It follows that 
\begin{align*}
D_{\lambda,n}(\varphi f) = D_{\sigma, n-1}\Big(\frac{\varphi f(z)-(\varphi f)^*(\lambda)}{z-\lambda}\Big).
\end{align*} 
Using the identity 
 \begin{align}\label{useful id}
\frac{\varphi f(z)-\varphi f(\lambda)}{z-\lambda} = \varphi(z) \frac{f(z)- f^*(\lambda)}{z-\lambda} + f^*(\lambda) \frac{\varphi(z)-\varphi (\lambda)}{z-\lambda},\,\,\,\,z\in\mathbb D,
\end{align}
it follows that 
\begin{align*}
D_{\lambda,n}(\varphi f) &\leqslant 2 D_{\sigma,n-1}\Big(\varphi \frac{f(z)-f^*(\lambda)}{z-\lambda}\Big) + 2|f^*(\lambda)|^2 D_{\sigma,n-1}\Big( \frac{\varphi(z)-\varphi(\lambda)}{z-\lambda}\Big),\\
& \leqslant  2  \|\varphi\|_{s (\sigma,n-1)}^2 D_{\sigma,n-1}\Big( \frac{f(z)-f^*(\lambda)}{z-\lambda}\Big) + 2 |f^*(\lambda)|^2D_{\sigma,n-1}\Big( \frac{\varphi(z)-\varphi^*(\lambda)}{z-\lambda}\Big),\\
&\leqslant  2 \|\varphi\|_{s (\sigma,n-1)}^2 D_{\lambda,n}(f) + 2 |f^*(\lambda)|^2D_{\lambda,n}(\varphi) .
\end{align*}
Using the identity \eqref{useful id} one more time, we obtain that 
\begin{align*}
|f^*(\lambda)|^2D_{\lambda,n}(\varphi)
 &= D_{\sigma,n-1}\Big(f^*(\lambda)\frac{\varphi(z)-\varphi(\lambda)}{z-\lambda}\Big),\\
 &= D_{\sigma,n-1} \Big( \frac{\varphi f(z)-\varphi f(\lambda)}{z-\lambda} - \varphi(z) \frac{f(z)- f^*(\lambda)}{z-\lambda} \Big),\\
 &\leqslant 2 D_{\sigma,n-1} \Big( \frac{\varphi f(z)-(\varphi f)^*(\lambda)}{z-\lambda} \Big) + 2D_{\sigma,n-1} \Big( \varphi(z) \frac{f(z)- f^*(\lambda)}{z-\lambda} \Big),\\
 &\leqslant 2 D_{\lambda,n}(\varphi f) + 2 \|\varphi\|_{s (\sigma,n-1)}^2 D_{\sigma,n-1} \Big( \frac{f(z)- f^*(\lambda)}{z-\lambda} \Big),\\
 &= 2 D_{\lambda,n}(\varphi f) + 2 \|\varphi\|_{s (\sigma,n-1)}^2 D_{\lambda,n}(f).
\end{align*}
This completes the proof.
\end{proof}

The following proposition describes the multiplier algebra of $\mathcal H_{\lambda, n}$ for any $n\in\mathbb N$ and $\lambda\in\mathbb T.$ 
\begin{theorem}\label{multipliers in local space}
Let $n\in\mathbb N$ and $\lambda \in \mathbb T.$ Then, $\text{Mult\,}(\mathcal H_{\lambda,n})= \mathcal H_{\lambda,n}\cap  \text{Mult\,}(\mathcal H_{\sigma,n-1}).$
\end{theorem}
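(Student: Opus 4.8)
The plan is to prove the two inclusions separately, drawing on Lemma \ref{Multiplier property} for the containment $\mathcal H_{\lambda,n}\cap \text{Mult\,}(\mathcal H_{\sigma,n-1})\subseteq \text{Mult\,}(\mathcal H_{\lambda,n})$ and on the local Douglas formula (Theorem \ref{generalized local Douglas formula}) together with a factorization argument for the reverse inclusion. For the first inclusion, I would take $\varphi\in\mathcal H_{\lambda,n}\cap \text{Mult\,}(\mathcal H_{\sigma,n-1})$ and an arbitrary $f\in\mathcal H_{\lambda,n}$, and show $\varphi f\in\mathcal H_{\lambda,n}$. When $D_{\lambda,n}(f)\neq 0$, the first inequality of Lemma \ref{Multiplier property} bounds $D_{\lambda,n}(\varphi f)$ by a sum of terms that are all finite: $\|\varphi\|_{s(\sigma,n-1)}<\infty$ because $\varphi$ multiplies $\mathcal H_{\sigma,n-1}$, $D_{\lambda,n}(\varphi)<\infty$ because $\varphi\in\mathcal H_{\lambda,n}$, and $D_{\lambda,n}(f)<\infty$ with $f^*(\lambda)$ finite because $f\in\mathcal H_{\lambda,n}$ (the boundary value existing by Theorem \ref{generalized local Douglas formula}). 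Hence $D_{\lambda,n}(\varphi f)<\infty$, so $\varphi f\in\mathcal H_{\lambda,n}$. The degenerate case $D_{\lambda,n}(f)=0$ I would dispatch separately: by Theorem \ref{generalized local Douglas formula} and Lemma \ref{lem well defined norm}, $f$ must be a polynomial, and since every polynomial multiplies $\mathcal H_{\lambda,n}$ while $\varphi\in\mathcal H_{\lambda,n}$, we again get $\varphi f=f\varphi\in\mathcal H_{\lambda,n}$.

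For the reverse inclusion $\text{Mult\,}(\mathcal H_{\lambda,n})\subseteq \mathcal H_{\lambda,n}\cap \text{Mult\,}(\mathcal H_{\sigma,n-1})$, fix $\varphi\in\text{Mult\,}(\mathcal H_{\lambda,n})$. That $\varphi\in\mathcal H_{\lambda,n}$ is immediate, since $1\in\mathcal H_{\lambda,n}$ and $\varphi\cdot 1=\varphi$. The substantive task is to show $\varphi\in\text{Mult\,}(\mathcal H_{\sigma,n-1})$. Given $h\in\mathcal H_{\sigma,n-1}$, I would set $f:=(z-\lambda)h$; by the ``if'' part of Theorem \ref{generalized local Douglas formula}, $f\in\mathcal H_{\lambda,n}$ with $f^*(\lambda)=0$. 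Since $\varphi$ is a multiplier of $\mathcal H_{\lambda,n}$, the product $\varphi f=(z-\lambda)(\varphi h)$ lies in $\mathcal H_{\lambda,n}$, so the ``only if'' part of Theorem \ref{generalized local Douglas formula} factors it as $\varphi f=\beta+(z-\lambda)g$ with $g\in\mathcal H_{\sigma,n-1}$ and $\beta=(\varphi f)^*(\lambda)$. The key point is that $\beta=0$: as $\text{Mult\,}(\mathcal H_{\lambda,n})\subseteq H^{\infty}$, the function $\varphi$ is bounded, while $f(r\lambda)\to 0$ as $r\to 1^-$ (either directly from Theorem \ref{generalized local Douglas formula}(ii) or from the $H^2$ growth estimate for $h$); hence $(\varphi f)(r\lambda)\to 0$, and since the non-tangential limit $(\varphi f)^*(\lambda)=\beta$ exists it must vanish. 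Comparing the two expressions then gives $(z-\lambda)(\varphi h)=(z-\lambda)g$, so $\varphi h=g\in\mathcal H_{\sigma,n-1}$. As $h$ was arbitrary, $\varphi\in\text{Mult\,}(\mathcal H_{\sigma,n-1})$.

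The main obstacle I anticipate is the reverse inclusion, specifically the clean extraction of the factor $(z-\lambda)$ and the verification that the constant $\beta$ produced by the local Douglas factorization really is zero; this is where boundedness of multipliers and the boundary behaviour encoded in Theorem \ref{generalized local Douglas formula} must be combined carefully. A secondary point to nail down is that $\varphi\in\text{Mult\,}(\mathcal H_{\sigma,n-1})$ indeed forces the seminorm quantity $\|\varphi\|_{s(\sigma,n-1)}$ to be finite, which I would justify by interpreting it as the operator norm of multiplication by $\varphi$ with respect to the seminorm $\sqrt{D_{\sigma,n-1}(\cdot)}$ and invoking boundedness of the multiplication operator; apart from this, both directions reduce to routine applications of the results already established in this section.
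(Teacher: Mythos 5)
Your proof is correct and follows essentially the same route as the paper: the inclusion $\mathcal H_{\lambda,n}\cap\text{Mult\,}(\mathcal H_{\sigma,n-1})\subseteq\text{Mult\,}(\mathcal H_{\lambda,n})$ via Lemma \ref{Multiplier property}, and the reverse inclusion by lifting $h\in\mathcal H_{\sigma,n-1}$ to $(z-\lambda)h\in\mathcal H_{\lambda,n}$, applying $M_\varphi$, and dividing $(z-\lambda)$ back out --- which is exactly what the paper's surjective map $L_\lambda$ combined with the identity \eqref{useful id} accomplishes. Your explicit handling of the degenerate case $D_{\lambda,n}(f)=0$ and of the vanishing of the constant $\beta=(\varphi f)^*(\lambda)=\varphi^*(\lambda)f^*(\lambda)=0$ are details the paper leaves implicit, and you resolve both correctly.
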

\begin{proof}
By Lemma \ref{Multiplier property}, it follows that $\varphi \in \text{Mult\,}(\mathcal H_{\lambda,n})$ whenever $\varphi \in \mathcal H_{\lambda,n}\cap  \text{Mult\,}(\mathcal H_{\sigma,n-1}).$ 
For the converse, let $\varphi \in \text{Mult\,}(\mathcal H_{\lambda,n}).$ Since the constant function $1\in \mathcal H_{\lambda,n},$ it follows that $\varphi \in \mathcal H_{\lambda,n}.$ Consider the map $L_{\lambda} : \mathcal H_{\lambda,n} \rightarrow \mathcal H_{\sigma,n-1}$  defined by 
\begin{align*}
L_{\lambda}(f)(z):= \frac{f(z)-f^*(\lambda)}{z-\lambda},\,\,\,z\in\mathbb D.
\end{align*}
By the generalized local Douglas formula in Theorem \ref{generalized local Douglas formula}, we have $L_{\lambda}$ is an isometry w.r.t the associated semi-norms, that is, $D_{\sigma,n-1}(L_{\lambda}f)= D_{\lambda,n}(f)$ for every $f\in  \mathcal H_{\lambda,n}.$ Furthermore it also follows that $L_{\lambda}$ is onto. 
Let $f\in \mathcal H_{\lambda,n}.$ An application of  the identity \eqref{useful id} gives us 
\begin{align*}
D_{\sigma,n-1}\Big(\varphi(z)\frac{f(z)-f^*(\lambda)}{z-\lambda}\Big)&= D_{\sigma,n-1} \Big( \frac{\varphi f(z)-(\varphi f)^*(\lambda)}{z-\lambda} - f^*(\lambda) \frac{\varphi(z)- \varphi^*(\lambda)}{z-\lambda} \Big),\\
&\leqslant 2 D_{\sigma,n-1} \Big( \frac{\varphi f(z)-(\varphi f)^*(\lambda)}{z-\lambda}\Big) + 2 |f^*(\lambda)|^2 D_{\sigma,n-1}\Big( \frac{\varphi(z)- \varphi^*(\lambda)}{z-\lambda} \Big),\\
&= 2 D_{\lambda,n}(\varphi f) + 2|f^*(\lambda)|^2 D_{\lambda,n}(\varphi),
\end{align*}
Thus we have $D_{\sigma,n-1}(\varphi L_{\lambda}f) < \infty$ for every $f\in \mathcal H_{\lambda,n}.$ Since $L_{\lambda}$ is onto, we obtain that $D_{\sigma,n-1}(\varphi g) < \infty$ for every $g\in \mathcal H_{\sigma,n-1}.$ Hence $\varphi \in \text{Mult\,}(\mathcal H_{\sigma,n-1}).$ This completes the proof.
\end{proof}



\vspace{2mm}
%
\noindent \textbf{Proof of Theorem \ref{prop algebra} :}
Let $\varphi, \psi\in \mathcal H_{\mu,n}\cap \text{Mult\,}(\mathcal H_{\sigma,n-1})$. Since $ \text{Mult\,}(\mathcal H_{\sigma,n-1})$ is an algebra, it is sufficient to show that $\varphi\psi \in \mathcal H_{\mu,n}.$ Note that $D_{\mu,n}(g) = \int_{\mathbb T} D_{\lambda,n}(g) d\mu(\lambda),$ for any $g\in\mathcal H_{\mu,n}.$ Hence we obtain  that $\varphi, \psi\in \mathcal H_{\lambda,n}$ for $[\mu]$ a.e. $\lambda$ in $\mathbb T.$ 
By Lemma \ref{Multiplier property} we have
\begin{align*}
D_{\lambda,n}(\varphi \psi)&\leqslant 2\|\varphi\|_{s (\sigma,n-1)}^2D_{\lambda,n}(\psi)+2|\psi^*(\lambda)|^2D_{\lambda,n}(\varphi),\\
&\leqslant 2\|\varphi\|_{s (\sigma,n-1)}^2D_{\lambda,n}(\psi)+2\|\psi\|_{\infty}^2D_{\lambda,n}(\varphi),~\lambda\in \mathbb T/A.
\end{align*}
Here in the last line we used the fact that $\psi\in \text{Mult\,}(\mathcal H_{\sigma,n-1})\subseteq H^\infty$. 
Integrating both sides  w.r.t $\mu(\lambda)$ we obtain
\begin{equation*}
D_{\mu,n}(\varphi \psi)\leqslant 2\|\varphi\|_{s (\sigma,n-1)}^2D_{\mu,n}(\psi)+2\|\psi\|_{\infty}^2D_{\mu,n}(\varphi),~\lambda\in \mathbb T.
\end{equation*}
This completes the proof of the theorem.

The above theorem helps us to characterize the multipliers of $\mathcal H_{\mu,n}$ for any $\mu\in \mathcal M_+(\mathbb T)$ and for any positive integer $n\geqslant 3.$ In this case, it turns out that $\mathcal H_{\mu,n}$ forms an algebra and consequently the multipliers of $\mathcal H_{\mu,n}$ is itself.

\begin{cor}\label{Algebra}
Let $n\geqslant 3$ be a positive integer and $\mu\in \mathcal M_+(\mathbb T)$. Then $\mathcal H_{\mu,n}$ is an algebra and consequently $\text{Mult\,}(\mathcal H_{\mu, n})=\mathcal H_{\mu,n}.$ 
\end{cor}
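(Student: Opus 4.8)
The plan is to reduce the Corollary to Theorem~\ref{prop algebra} by showing that, for $n\geqslant 3$ and $\mu\neq 0$, the set $\mathcal H_{\mu,n}\cap\text{Mult\,}(\mathcal H_{\sigma,n-1})$ occurring there is simply $\mathcal H_{\mu,n}$ itself. Two facts will make this work. First, since $n-1\geqslant 2$, the space $\mathcal H_{\sigma,n-1}=\mathcal D_{n-1}$ is closed under pointwise products, so $\text{Mult\,}(\mathcal H_{\sigma,n-1})=\mathcal H_{\sigma,n-1}$ (as already recorded in this section via \cite[Theorem 7]{TaylorDA}). Second, $\mathcal H_{\mu,n}\subseteq\mathcal H_{\sigma,n-1}$. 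Granting these, $\mathcal H_{\mu,n}\cap\text{Mult\,}(\mathcal H_{\sigma,n-1})=\mathcal H_{\mu,n}$, and Theorem~\ref{prop algebra} immediately yields that $\mathcal H_{\mu,n}$ is an algebra.

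First I would prove the inclusion $\mathcal H_{\mu,n}\subseteq\mathcal H_{\sigma,n-1}$. Let $f\in\mathcal H_{\mu,n}$. The Fubini identity $D_{\mu,n}(f)=\int_{\mathbb T}D_{\lambda,n}(f)\,d\mu(\lambda)$ established after Theorem~\ref{generalized local Douglas formula} shows that $D_{\lambda,n}(f)<\infty$ for $\mu$-almost every $\lambda$; since $\mu\neq 0$, there is at least one such $\lambda_0$, so $f\in\mathcal H_{\lambda_0,n}$. Theorem~\ref{generalized local Douglas formula} then gives $f=\alpha+(z-\lambda_0)g$ for some $\alpha\in\mathbb C$ and $g\in\mathcal H_{\sigma,n-1}$; because $z$ is a multiplier of $\mathcal H_{\sigma,n-1}$ (see \cite[Proposition 2.6]{GGR}) and constants lie in $\mathcal H_{\sigma,n-1}$, this forces $f\in\mathcal H_{\sigma,n-1}$, which is exactly the inclusion noted in the remark preceding the Corollary of Theorem~\ref{generalized local Douglas formula}. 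Combining with the first fact gives $\mathcal H_{\mu,n}\subseteq\text{Mult\,}(\mathcal H_{\sigma,n-1})$, and the algebra conclusion follows from Theorem~\ref{prop algebra} as above. To finish, I would record the multiplier identity by a two-sided inclusion: since $\mathcal H_{\mu,n}$ is an algebra, each $\varphi\in\mathcal H_{\mu,n}$ maps $\mathcal H_{\mu,n}$ into itself and hence lies in $\text{Mult\,}(\mathcal H_{\mu,n})$; conversely any $\varphi\in\text{Mult\,}(\mathcal H_{\mu,n})$ satisfies $\varphi=\varphi\cdot 1\in\mathcal H_{\mu,n}$ because $1\in\mathcal H_{\mu,n}$.

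Because all the analytic content sits in Theorem~\ref{generalized local Douglas formula} and Theorem~\ref{prop algebra}, I do not expect a genuine obstacle here; the argument is essentially bookkeeping. The one point requiring care is the passage from ``$D_{\lambda,n}(f)<\infty$ for $\mu$-almost every $\lambda$'' to membership in the single fixed space $\mathcal H_{\sigma,n-1}$, which is where both the hypothesis $\mu\neq 0$ and the threshold $n-1\geqslant 2$ (so that $\mathcal D_{n-1}$ is an algebra) enter. This also explains why the scheme fails at $n=2$: there $\mathcal H_{\sigma,1}=\mathcal D$ is not an algebra, consistent with $\mathcal H_{\lambda,2}$ failing to be one.
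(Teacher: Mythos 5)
Your proof is correct and follows essentially the same route as the paper: reduce to Theorem~\ref{prop algebra} by noting that $\text{Mult\,}(\mathcal H_{\sigma,n-1})=\mathcal H_{\sigma,n-1}$ for $n\geqslant 3$ and that $\mathcal H_{\mu,n}\subseteq \mathcal H_{\sigma,n-1}$, so the intersection in that theorem is all of $\mathcal H_{\mu,n}$. The only cosmetic difference is that the paper cites \cite[Lemma 2.4]{GGR} for the inclusion $\mathcal H_{\mu,n}\subseteq \mathcal H_{\sigma,n-1}$, whereas you rederive it from the local Douglas formula (as the paper itself sketches in the remark following Theorem~\ref{generalized local Douglas formula}); your explicit flagging of the $\mu\neq 0$ hypothesis is also appropriate, since the convention $\mathcal H_{0,n}=H^2$ would otherwise make the statement fail.
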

\begin{proof}
We have $\text{Mult\,}(\mathcal H_{\sigma, n-1})=\mathcal H_{\sigma,n-1}$ for any $n\geqslant 3.$ Thus applying Theorem \ref{Algebra}, it follows that $\mathcal H_{\mu,n}\cap \mathcal H_{\sigma,n-1}$ is an algebra. 
By \cite[Lemma 2.4]{GGR}, we also have that $\mathcal H_{\mu,n}\subseteq \mathcal H_{\sigma,n-1}.$ Hence $\mathcal H_{\mu,n}\cap \mathcal H_{\sigma,n-1}=\mathcal H_{\mu,n}.$ Thus $\mathcal H_{\mu,n}$ is an algebra and consequently $\text{Mult\,}(\mathcal H_{\mu, n})=\mathcal H_{\mu,n}.$ 
\end{proof}

\begin{cor}\label{algebra for tuple}
Let $\pmb \mu =(\mu_1,\ldots,\mu_{m})$ be an $m$-tuple of finite positive Borel measures on $\mathbb T$ with $\mu_m \neq 0.$ Then $\mathcal H_{\pmb\mu}\cap \text{Mult\,}(\mathcal H_{\sigma,m-1})$ is an algebra. In particular, $\mathcal H_{\pmb\mu}$ is an algebra for $m\geqslant 3.$ 
\end{cor}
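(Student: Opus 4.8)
The plan is to fix $\varphi,\psi\in\mathcal H_{\pmb\mu}\cap\text{Mult\,}(\mathcal H_{\sigma,m-1})$ and show that their product stays in this set. Since $\text{Mult\,}(\mathcal H_{\sigma,m-1})$ is an algebra, we get $\varphi\psi\in\text{Mult\,}(\mathcal H_{\sigma,m-1})$ for free, so the entire difficulty is to prove $\varphi\psi\in\mathcal H_{\pmb\mu}=\bigcap_{j=1}^m\mathcal H_{\mu_j,j}$, i.e. $\varphi\psi\in\mathcal H_{\mu_j,j}$ for each $j=1,\dots,m$. For a fixed $j$, Theorem \ref{prop algebra} asserts that $\mathcal H_{\mu_j,j}\cap\text{Mult\,}(\mathcal H_{\sigma,j-1})$ is an algebra, so it suffices to check that both $\varphi$ and $\psi$ lie in $\mathcal H_{\mu_j,j}\cap\text{Mult\,}(\mathcal H_{\sigma,j-1})$. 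Membership in $\mathcal H_{\mu_j,j}$ is immediate from $\mathcal H_{\pmb\mu}\subseteq\mathcal H_{\mu_j,j}$, so everything reduces to the single point $\varphi,\psi\in\text{Mult\,}(\mathcal H_{\sigma,j-1})$ for every $j\leqslant m$.

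I would isolate this as a monotonicity statement for the multiplier algebras of the scale $\mathcal H_{\sigma,k}=\mathcal D_k$: for $0\leqslant k\leqslant l$ one has $\text{Mult\,}(\mathcal D_l)\subseteq\text{Mult\,}(\mathcal D_k)$. Granting this, since $\varphi,\psi\in\text{Mult\,}(\mathcal H_{\sigma,m-1})=\text{Mult\,}(\mathcal D_{m-1})$ and $j-1\leqslant m-1$, we obtain $\varphi,\psi\in\text{Mult\,}(\mathcal D_{j-1})=\text{Mult\,}(\mathcal H_{\sigma,j-1})$ for each $j\leqslant m$, which closes the first part. To prove the monotonicity I would split into cases. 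When $k\geqslant 2$ (hence $l\geqslant 2$) both multiplier algebras are the spaces themselves, so $\text{Mult\,}(\mathcal D_l)=\mathcal D_l\subseteq\mathcal D_k=\text{Mult\,}(\mathcal D_k)$ by \cite[Theorem 7]{TaylorDA} together with the nesting $\mathcal D_l\subseteq\mathcal D_k$. When $k=0$ we have $\text{Mult\,}(\mathcal D_0)=\text{Mult\,}(H^2)=H^\infty$, and every multiplier of a functional Hilbert space with property (A) is bounded (as recorded in Section \ref{Multipliers}), so $\text{Mult\,}(\mathcal D_l)\subseteq H^\infty=\text{Mult\,}(\mathcal D_0)$. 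The only genuinely nontrivial case is $k=1$ with $l\geqslant 2$, which via $\mathcal D_l\subseteq\mathcal D_2$ reduces to the classical inclusion $\mathcal D_2\subseteq\text{Mult\,}(\mathcal D_1)$.

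I expect this last inclusion to be the main obstacle, and I would establish it (it is in any case classical, cf. \cite{Primer}) by a Carleson-type estimate. For $\varphi=\sum_k a_k z^k\in\mathcal D_2$ and $f\in\mathcal D_1$ one has $D(\varphi f)\leqslant 2\int_{\mathbb D}|\varphi'|^2|f|^2\,dA+2\|\varphi\|_\infty^2 D(f)$; the second term is harmless because $\mathcal D_2\subseteq H^\infty$, while for the first one combines the growth bound $|f(z)|^2\leqslant C\|f\|_{\mathcal D_1}^2\log\frac{e}{1-|z|^2}$ valid for $f\in\mathcal D_1$ with the elementary fact that $\int_{\mathbb D}|\varphi'(z)|^2\log\frac{e}{1-|z|^2}\,dA(z)$ is comparable to $\sum_{k}(k+1)|a_{k+1}|^2\log(k+1)$, which is at most $C\sum_k(k+1)^2|a_{k+1}|^2<\infty$. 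This yields $D(\varphi f)\leqslant C\|f\|_{\mathcal D_1}^2$ and hence $\varphi\in\text{Mult\,}(\mathcal D_1)$.

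Finally, for the ``in particular'' assertion I would argue that when $m\geqslant 3$ the intersection with $\text{Mult\,}(\mathcal H_{\sigma,m-1})$ is redundant. Indeed $\mu_m\neq 0$ gives $\mathcal H_{\pmb\mu}\subseteq\mathcal H_{\mu_m,m}\subseteq\mathcal H_{\sigma,m-1}=\mathcal D_{m-1}$ by \cite[Lemma 2.4]{GGR}, and since $m-1\geqslant 2$ we have $\mathcal D_{m-1}=\text{Mult\,}(\mathcal D_{m-1})=\text{Mult\,}(\mathcal H_{\sigma,m-1})$; therefore $\mathcal H_{\pmb\mu}\cap\text{Mult\,}(\mathcal H_{\sigma,m-1})=\mathcal H_{\pmb\mu}$, and the first part shows this is an algebra.
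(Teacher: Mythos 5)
Your proof is correct and follows essentially the same route as the paper: both reduce the claim to Theorem \ref{prop algebra} applied coordinate-wise via the nesting $\text{Mult\,}(\mathcal H_{\sigma,l})\subseteq\text{Mult\,}(\mathcal H_{\sigma,k})$ for $k\leqslant l$, and both handle the ``in particular'' part through $\mathcal H_{\pmb\mu}\subseteq\mathcal H_{\mu_m,m}\subseteq\mathcal H_{\sigma,m-1}=\text{Mult\,}(\mathcal H_{\sigma,m-1})$. The only difference is that you prove the multiplier-algebra monotonicity (including the classical inclusion $\mathcal D_2\subseteq\text{Mult\,}(\mathcal D_1)$) from scratch, whereas the paper simply cites Taylor \cite[p.\ 233, Corollary 1]{TaylorDA}.
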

\begin{proof}
It is well known that for every $n\in \mathbb N,$ $\text{Mult\,}(\mathcal H_{\sigma,n})\subset \text{Mult\,}(\mathcal H_{\sigma,n-1}),$ see \cite[p. 233, Corollary 1]{TaylorDA}. So, we have 
\begin{eqnarray*}
\mathcal H_{\pmb\mu}\cap \text{Mult\,}(\mathcal H_{\sigma,m-1}) &=&  \mathcal H_{\mu_1,1}\cap \cdots \cap \mathcal H_{\mu_m,m} \cap \text{Mult\,}(\mathcal H_{\sigma,m-1}),\\
&=& \big(\mathcal H_{\mu_1,1}\cap \text{Mult\,}(\mathcal H_{\sigma,0})\big)\cap  \cdots \cap \big(\mathcal H_{\mu_m,m} \cap \text{Mult\,}(\mathcal H_{\sigma,m-1})\big).
\end{eqnarray*}
By an application of Theorem \ref{Algebra}, $\mathcal H_{\mu_j,j} \cap \text{Mult\,}(\mathcal H_{\sigma,j-1})$ is an algebra for each $j=1,\ldots,m.$ Hence $\mathcal H_{\pmb\mu}\cap \text{Mult\,}(\mathcal H_{\sigma,m-1})$ is an algebra. 
It also follows from Theorem \ref{Algebra} that the space $\mathcal H_{\mu,j} \cap \text{Mult\,}(\mathcal H_{\sigma,j-1})=\mathcal H_{\mu,j}$
for any $j\geqslant 3.$ Hence $\mathcal H_{\pmb\mu}\cap \text{Mult\,}(\mathcal H_{\sigma,m-1})=\mathcal H_{\pmb\mu}$ for any $m\geqslant 3.$ This completes the proof.
\end{proof}

Although $\mathcal H_{\mu,n}$ is an algebra for every $n\geqslant 3$ and for any $\mu \in \mathcal M_+(\mathbb T)$, it is well known that $\mathcal H_{\mu,1}$ is never an algebra, see \cite[Exercise 7.1.4]{Primer}. So it is natural to ask the following question.
\begin{Question}Characterize all $\mu\in \mathcal M_+(\mathbb T)$ such that $\mathcal H_{\mu,2}$ is an algebra?
\end{Question}


\begin{example}\label{remalgebra} 
We now point out few cases when $\mathcal H_{\mu, 2}$ is an algebra and when it is not.
\begin{itemize}
\item[(\rm i)] The space $\mathcal H_{\lambda,2}$ is not an algebra for any $\lambda\in\mathbb T$. If possible assume that  $\mathcal H_{\lambda,2}$ is  an algebra for some $\lambda\in\mathbb T$. Then we must have $
\mathcal H_{\lambda,2}=\text{Mult\,}(\mathcal H_{\lambda,2})\subset H^\infty$. We now show that there exists a function in $\mathcal H_{\lambda,2}$ which is not bounded on $\mathbb D$. Choose a $\tilde{\lambda}\neq \lambda\in \mathbb T$ and a function $g\in \mathcal H_{\sigma,1},$ the classical Dirichlet space, such that
$\lim_{r\to 1^-}g(r\tilde{\lambda})=\infty$. Then by Theorem \ref{generalized local Douglas formula}, $(z-\lambda)g\in \mathcal H_{\lambda,2}$. But $\lim_{r\to 1^-}(r\tilde{\lambda}-\lambda)g(r\tilde{\lambda})=\infty$. Hence $(z-\lambda)g\notin H^\infty.$

\item[\rm (ii)] Note that $\mathcal H_{\sigma,2} = \mathcal D_2$ and $\mathcal D_2$ forms an algebra, see \cite[Theorem 7]{TaylorDA}. In fact, if $\pmb \mu=(\mu_1,\mu_2+\sigma)$, where $\mu_1,\mu_2\in \mathcal M_+(\mathbb T)$, then $\mathcal H_{\pmb\mu}$ is  an algebra. To see this, note that $\mathcal H_{\pmb\mu} = \mathcal H_{\mu_1}\cap\mathcal H_{\sigma,2} \cap \mathcal H_{\mu,2}.$ Since $\mathcal H_{\sigma,2}\subset \text{Mult\,}(\mathcal H_{\sigma,1})$, we obtain that $\mathcal H_{\pmb\mu}\subset \text{Mult\,}(\mathcal H_{\sigma,1}).$
The conclusion now follows from Theorem \ref{prop algebra}.
\end{itemize}
\end{example}

%

%

The following theorem is a generalization of \cite[Theorem 8.3.2]{Primer}. As a corollary to this theorem we obtain a stronger version of Theorem \ref{prop algebra} of this section.
\begin{theorem}\label{Inv-Mult}
Let $\mu\in \mathcal M_+(\mathbb T)\setminus \{0\},$  $n\in\mathbb N.$ Let $\mathcal W$ be a closed $M_z$-invariant subspace of $\mathcal H_{\mu,n}$ and $\phi\in \text{Mult\,}(\mathcal H_{\sigma,n-1}).$ Suppose $f\in \mathcal W$ be such that $\phi f\in \mathcal H_{\mu,n},$ then $\phi f\in \mathcal W.$
\end{theorem}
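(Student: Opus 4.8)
The plan is to realize $\phi f$ as a limit, inside $\mathcal W$, of the dilated products $\phi_r f$, where $\phi_r(z)=\phi(rz)$ for $0<r<1$. Since $\mathcal W$ is a linear $M_z$-invariant subspace, $pf\in\mathcal W$ for every polynomial $p$, and since $\mathcal W$ is closed it therefore suffices to show that $\phi_r f\in\mathcal W$ for each $r<1$ and that $\phi_r f\to\phi f$ in $(\mathcal H_{\mu,n},\|\cdot\|_{\mu,n})$ as $r\to1^-$; then $\phi f\in\mathcal W$ follows at once. Note that this reduction does not even require replacing $\mathcal W$ by the cyclic subspace generated by $f$.

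First I would check that $\phi_r f\in\mathcal W$. For $r<1$ the function $\phi_r$ is holomorphic on a neighbourhood of $\overline{\mathbb D}$, so the Taylor partial sums $s_N$ of $\phi_r$ converge to $\phi_r$ together with all their derivatives uniformly on $\overline{\mathbb D}$. Feeding $\psi=s_N-\phi_r$ into the Leibniz/Cauchy--Schwarz estimate from the proof of Proposition \ref{Useful lemma} and using the finiteness of the integrals $I_{\mu,j,n}(f)$ from Lemma \ref{lower derivative inclusion}, one obtains $D_{\mu,n}((s_N-\phi_r)f)\to0$, while $\|(s_N-\phi_r)f\|_{H^2}\le\|s_N-\phi_r\|_\infty\|f\|_{H^2}\to0$. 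Hence $s_N f\to\phi_r f$ in $\mathcal H_{\mu,n}$; as each $s_N f$ is a polynomial multiple of $f$ and thus lies in $\mathcal W$, closedness gives $\phi_r f\in\mathcal W$.

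For the convergence $\phi_r f\to\phi f$ the $H^2$-part is routine (using $\|\phi_r\|_\infty\le\|\phi\|_\infty$ and $\phi_r\to\phi$ locally uniformly), so the real work is to show $D_{\mu,n}(\phi_r f-\phi f)\to0$. I would write $D_{\mu,n}(\,\cdot\,)=\int_{\mathbb T}D_{\lambda,n}(\,\cdot\,)\,d\mu$ and argue by dominated convergence. For $\mu$-a.e.\ $\lambda$ both $f$ and $\phi f$ lie in $\mathcal H_{\lambda,n}$, so by the local Douglas formula (Theorem \ref{generalized local Douglas formula}) and the identity \eqref{useful id}, with $L_\lambda g=\tfrac{g-g^*(\lambda)}{z-\lambda}$, one gets $L_\lambda(\phi_r f-\phi f)=(\phi_r-\phi)L_\lambda f+f^*(\lambda)L_\lambda(\phi_r-\phi)$; the second term is absent (hence harmless) where $f^*(\lambda)=0$, and where $f^*(\lambda)\neq0$ the same identity forces $\phi\in\mathcal H_{\lambda,n}$, so $L_\lambda\phi\in\mathcal H_{\sigma,n-1}$ is a genuine element. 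The first summand tends to $0$ in $\mathcal H_{\sigma,n-1}$ because $\phi_r h\to\phi h$ there for each fixed $h=L_\lambda f$: one splits $\phi_r h-\phi h=\phi_r(h-h_r)+((\phi h)_r-\phi h)$ and applies the contractivity consequence $D_{\sigma,n-1}(h_r-h)\to0$ of Theorem \ref{C_r contraction} together with the uniform bound $\|\phi_r\|_{s(\sigma,n-1)}\le\|\phi\|_{s(\sigma,n-1)}$. The second summand tends to $0$ because $D_{\lambda,n}(\phi_r-\phi)\to0$, again by the consequence of Theorem \ref{C_r contraction} applied to $\phi\in\mathcal H_{\lambda,n}$. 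Thus $D_{\lambda,n}(\phi_r f-\phi f)\to0$ for $\mu$-a.e.\ $\lambda$.

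To finish I would exhibit a $\mu$-integrable majorant independent of $r$. Using $\|\phi_r-\phi\|_{s(\sigma,n-1)}\le2\|\phi\|_{s(\sigma,n-1)}$ and $D_{\lambda,n}(\phi_r)\le D_{\lambda,n}(\phi)$ (the dilation constant in Theorem \ref{C_r contraction} being at most $1$), the two summands are dominated by $C\|\phi\|_{s(\sigma,n-1)}^2 D_{\lambda,n}(f)$ and $C|f^*(\lambda)|^2 D_{\lambda,n}(\phi)$; both are $\mu$-integrable, the first because $\int D_{\lambda,n}(f)\,d\mu=D_{\mu,n}(f)<\infty$, and the second because Lemma \ref{Multiplier property} gives $\int|f^*(\lambda)|^2 D_{\lambda,n}(\phi)\,d\mu\le2\|\phi\|_{s(\sigma,n-1)}^2 D_{\mu,n}(f)+2D_{\mu,n}(\phi f)<\infty$. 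Dominated convergence then yields $D_{\mu,n}(\phi_r f-\phi f)\to0$, completing the argument. The step I expect to be the main obstacle is precisely this uniform control of the cross terms, resting on two dilation-contractivity facts: the contractivity of $f\mapsto f_r$ in the $D_{\lambda,n}$-seminorm (Theorem \ref{C_r contraction}) and the contractivity of dilation on the multiplier seminorm $\|\cdot\|_{s(\sigma,n-1)}$. The latter I would justify by writing $\phi_r=\int_{-\pi}^{\pi}(R_\theta\phi)\,P_r(\theta)\,\tfrac{d\theta}{2\pi}$ as a Poisson average of the rotations $(R_\theta\phi)(z)=\phi(e^{i\theta}z)$, each of which has the same multiplier seminorm as $\phi$ because the reproducing kernel of $\mathcal H_{\sigma,n-1}$ is rotation invariant.
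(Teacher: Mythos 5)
Your argument is correct, and it shares the paper's skeleton --- dilate $\phi$ to $\phi_r$, get $\phi_r f\in\mathcal W$ by polynomial approximation, and control everything through the two inequalities of Lemma \ref{Multiplier property} --- but the endgame is genuinely different. The paper stops at the uniform bound $\sup_{0<r<1}D_{\mu,n}(\phi_r f)<\infty$ (obtained by integrating $D_{\lambda,n}(\phi_r f)\leqslant 6\|\phi\|_{s(\sigma,n-1)}^2 D_{\lambda,n}(f)+4D_{\lambda,n}(\phi f)$ over $\mathbb T$), extracts a weakly convergent subsequence of $\{\phi_{r}f\}$, identifies its weak limit as $\phi f$, and invokes weak closedness of $\mathcal W$. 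You instead prove the stronger statement that $\phi_r f\to\phi f$ in the norm of $\mathcal H_{\mu,n}$, by establishing $D_{\lambda,n}(\phi_r f-\phi f)\to0$ for $\mu$-a.e.\ $\lambda$ via the decomposition $L_\lambda((\phi_r-\phi)f)=(\phi_r-\phi)L_\lambda f+f^*(\lambda)L_\lambda(\phi_r-\phi)$ and then applying dominated convergence with the majorant $C\|\phi\|_{s(\sigma,n-1)}^2D_{\lambda,n}(f)+C|f^*(\lambda)|^2D_{\lambda,n}(\phi)$, whose integrability is exactly the paper's own estimate. This costs more work (the pointwise limit needs the dilation convergence $D_{\sigma,n-1}(h_r-h)\to0$ and the observation that $f^*(\lambda)\neq0$ forces $\phi\in\mathcal H_{\lambda,n}$, both of which you handle correctly) but buys norm convergence rather than mere membership, and avoids any appeal to weak compactness. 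Two further points in your favour: your Taylor-partial-sum argument for $s_Nf\to\phi_r f$ via the Leibniz estimate of Proposition \ref{Useful lemma} is more careful than the paper's corresponding step (which passes from $D_{\mu,n}(p_j-\phi_r)\to0$ to $p_jf\to\phi_rf$ without justification), and your rotation-averaging proof of $\|\phi_r\|_{s(\sigma,n-1)}\leqslant\|\phi\|_{s(\sigma,n-1)}$ supplies a fact the paper's displayed inequality uses implicitly without comment.
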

\begin{proof}
Let $\phi$ be a multiplier of $\mathcal H_{\sigma, n-1}.$ For every $0<r<1,$ define $\phi_r(z):=\phi(rz).$ 
Note that for every $0<r<1,$ $\phi_r\in \mathcal O(\overline{\mathbb D})$ and 
hence $\phi_r\in \text{Mult\,}(\mathcal H_{\mu,n}).$ Fix a $r$ satisfying $0 < r < 1.$ Choose a sequence of polynomials $\{p_j\}_{j\in\mathbb N}$ such that $D_{\mu,n}(p_j-\phi_r) \to 0$ as $j\to \infty.$ Let $f\in \mathcal W.$ Since $\mathcal W$ is a $M_z$-invariant subspace of $\mathcal H_{\mu,n},$ we have that $p_j f\in \mathcal W$ for all $j\in\mathbb N.$ As $p_nf\to \phi_r f$ in $\mathcal H_{\mu, n},$ we have that $\phi_r f\in M.$ Since $r$ is an  arbitrary number between $0$ and $1,$ we conclude that $\phi_r f\in M$ for every $0< r <1.$ To show $\phi f\in M,$ it is enough to show that 
$$\sup\{D_{\mu,n}(\phi_r f):0<r<1\}< \infty.$$ 
Since if this is the case then $\{\phi_r f\}$ turns out to be a norm bounded family of function in the Hilbert space $\mathcal H_{\mu,n}.$ And therefore there exists a sequence $r_n$ converging to $1$ such that $\phi_{r_n} f$ converges to $\phi f$ weakly and hence $\phi f\in M.$ Using Lemma \ref{Multiplier property}, we find that 
\begin{eqnarray*}
D_{\lambda,n}(\phi_r f) & \leqslant & 2 \|\phi_r\|_{s (\sigma,n-1)}^2 D_{\lambda,n}(f) + 2 |f(\lambda)|^2D_{\lambda,n}(\phi_r)\\
& \leqslant & 6\|\phi\|_{s (\sigma,n-1)}^2 D_{\lambda,n}(f)+ 4 D_{\lambda,n}(\phi f)
\end{eqnarray*}   
Integrating with respect to $\mu$ on both the sides of the above inequality, we obtain that 
$$\sup \,\{ D_{\mu,n}(\phi_r f):0<r<1\}< \infty.$$
This completes the proof.
\end{proof}

\begin{cor}
Let $\mu\in \mathcal M_+(\mathbb T)\setminus \{0\}$ and $n\in \mathbb N$. If $\mathcal W$ is a closed $M_z$-invariant subspace of $\mathcal H_{\mu,n}$ then $\mathcal W \cap \text{Mult\,}(\mathcal H_{\sigma,n-1})$ is an algebra.
\end{cor}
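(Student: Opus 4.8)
The plan is to verify that $\mathcal W \cap \text{Mult\,}(\mathcal H_{\sigma,n-1})$ is closed under the two algebra operations. Closure under addition and scalar multiplication is immediate, since $\mathcal W$ is a (closed) linear subspace of $\mathcal H_{\mu,n}$ and $\text{Mult\,}(\mathcal H_{\sigma,n-1})$ is a linear subspace of $\mathcal O(\mathbb D)$, so their intersection is a linear space. The only real content is closure under pointwise multiplication, and here the strategy is to split the required membership of a product into its two defining conditions and discharge each by a theorem proved earlier.

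Concretely, I would take $\varphi,\psi\in \mathcal W \cap \text{Mult\,}(\mathcal H_{\sigma,n-1})$ and argue that $\varphi\psi$ again lies in this intersection. First, $\varphi\psi\in \text{Mult\,}(\mathcal H_{\sigma,n-1})$, because the multiplier space of any reproducing kernel Hilbert space is itself an algebra under pointwise product, as recorded earlier in this section. Second, I would observe that since $\mathcal W\subseteq \mathcal H_{\mu,n}$, both $\varphi$ and $\psi$ belong to $\mathcal H_{\mu,n}\cap \text{Mult\,}(\mathcal H_{\sigma,n-1})$; by Theorem \ref{prop algebra} this set is an algebra, whence $\varphi\psi\in \mathcal H_{\mu,n}$. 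At this point the product is known to lie in $\mathcal H_{\mu,n}$, but not yet in the invariant subspace $\mathcal W$.

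The final and decisive step is to apply Theorem \ref{Inv-Mult} with the choices $\phi=\varphi\in \text{Mult\,}(\mathcal H_{\sigma,n-1})$ and $f=\psi\in \mathcal W$. Since we have just shown $\phi f=\varphi\psi\in \mathcal H_{\mu,n}$, that theorem yields $\varphi\psi\in \mathcal W$. Combining this with the first step gives $\varphi\psi\in \mathcal W\cap \text{Mult\,}(\mathcal H_{\sigma,n-1})$, which establishes the claim.

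I do not expect any genuine obstacle of a computational nature here; the work has already been done in Theorem \ref{prop algebra} and Theorem \ref{Inv-Mult}. The only point requiring care is the correct bookkeeping: Theorem \ref{prop algebra} is what places the product back into $\mathcal H_{\mu,n}$, and only then is the hypothesis $\phi f\in \mathcal H_{\mu,n}$ of Theorem \ref{Inv-Mult} satisfied, allowing one to conclude membership in $\mathcal W$. Getting these two invocations in the right order, and in particular recognizing that closure under multiplication must be checked coordinate-wise against both defining properties of the intersection, is the crux of the argument.
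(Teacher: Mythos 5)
Your proposal is correct and follows exactly the paper's own argument: apply Theorem \ref{prop algebra} to get $\varphi\psi\in\mathcal H_{\mu,n}$, then Theorem \ref{Inv-Mult} to place the product back in $\mathcal W$, with the multiplier-algebra closure handling the remaining condition. The only difference is that you spell out the (routine) closure in $\text{Mult\,}(\mathcal H_{\sigma,n-1})$, which the paper leaves implicit.
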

\begin{proof}
Let $\varphi, f\in \mathcal W \cap \text{Mult\,}(\mathcal H_{\sigma,n-1}).$ From Theorem \ref{prop algebra}, we know that $\varphi f\in \mathcal H_{\mu,n}.$ Now by Theorem \ref{Inv-Mult}, we deduce that $\varphi f\in \mathcal W.$ This completes the proof.
\end{proof}
We have already seen that $\mathcal H_{\mu,n}$ is an algebra for every $n\geqslant 3$ and for any $\mu \in \mathcal M_+(\mathbb T).$ There are many interesting consequences of a reproducing kernel Hilbert space $\mathscr H_{\kappa}$ being an algebra under pointwise product, see for example \cite[Proposition 31, pp. 94--95 ]{SH}. In this part of the section, we note down some of these consequences of $\mathscr H_{\kappa}$ being an algebra. Let $\mathscr C(\overline{\mathbb D})$ denote the algebra of continuous functions on the closed unit disc $\overline{\mathbb D}.$ In the following proposition, we first derive that under certain conditions, an abelian Banach algebra contained in $\mathcal O(\mathbb D)$ must lie in the disc algebra $\mathcal O(\mathbb D)\cap \mathscr C(\overline{\mathbb D}).$ 
\begin{prop}\label{algebra is in disc algebra}
Let $\mathcal A \subseteq \mathcal O(\mathbb D)$ be an abelian Banach algebra under pointwise product and the set of all polynomials is dense in $\mathcal A.$ Suppose $\sigma_\mathcal A(z),$ the spectrum of the co-ordinate function $z$ in $\mathcal A,$ is equal to the closed unit disc $\overline{\mathbb D}.$ Then it follows that $\mathcal A \subseteq \mathscr C(\overline{\mathbb D})$ and for every $\lambda\in\overline{\mathbb D},$ the evaluation map $ev_{\lambda}: \mathcal A  \to \mathbb C$ defined by $ev_{\lambda}(f)= f(\lambda),\,f\in\mathcal A,$ is continuous. Furthermore $\sigma_{\mathcal A}(f)= f(\overline{\mathbb D})$ holds for every $f\in\mathcal A.$
\end{prop}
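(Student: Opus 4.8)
The plan is to run the standard Gelfand theory of $\mathcal A$ as a commutative unital Banach algebra. First I note that $\mathcal A$ is unital: the constant function $1$ is a polynomial, hence lies in $\mathcal A$, and it is the identity for the pointwise product. Let $\mathfrak M(\mathcal A)$ denote the maximal ideal space of $\mathcal A$, that is, the set of nonzero multiplicative linear functionals with the weak-$*$ topology; it is compact Hausdorff, and for every $f\in\mathcal A$ the basic Gelfand theorem gives $\sigma_{\mathcal A}(f)=\{\chi(f):\chi\in\mathfrak M(\mathcal A)\}$, the range of the Gelfand transform $\hat f$. All three assertions will be read off from a homeomorphic identification $\mathfrak M(\mathcal A)\cong\overline{\mathbb D}$.

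The central observation is that, because the product on $\mathcal A$ is pointwise, for each $\lambda\in\mathbb D$ the evaluation $ev_\lambda$ is linear, multiplicative (as $(fg)(\lambda)=f(\lambda)g(\lambda)$), and nonzero (since $ev_\lambda(1)=1$); hence it is a character. By the automatic continuity of characters on a Banach algebra one has $|ev_\lambda(f)|\le\|f\|$, so $ev_\lambda$ is continuous and $ev_\lambda\in\mathfrak M(\mathcal A)$. Next I would study the map $\Phi:\mathfrak M(\mathcal A)\to\mathbb C$ given by $\Phi(\chi)=\chi(z)$. It is weak-$*$ continuous (being evaluation at the fixed element $z$), its range is exactly $\sigma_{\mathcal A}(z)=\overline{\mathbb D}$, and it is injective: since the polynomials are dense and every character is continuous, a character is completely determined by its value on $z$. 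Thus $\Phi$ is a continuous bijection from a compact space onto the Hausdorff space $\overline{\mathbb D}$, hence a homeomorphism, with $\Phi^{-1}(\lambda)=ev_\lambda$ for $\lambda\in\mathbb D$ and $\Phi^{-1}(\lambda)=:\chi_\lambda$ for $\lambda\in\mathbb T$.

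Finally I would transport the Gelfand transform through $\Phi$. For $f\in\mathcal A$ set $F:=\hat f\circ\Phi^{-1}:\overline{\mathbb D}\to\mathbb C$, which is continuous. At interior points $F(\lambda)=\hat f(ev_\lambda)=ev_\lambda(f)=f(\lambda)$, so $F$ is precisely the continuous extension of $f|_{\mathbb D}$ to $\overline{\mathbb D}$; this yields $\mathcal A\subseteq\mathscr C(\overline{\mathbb D})$. Continuity of $ev_\lambda$ for every $\lambda\in\overline{\mathbb D}$ is then immediate, because $ev_\lambda$ coincides with the character $\Phi^{-1}(\lambda)$ and $|\Phi^{-1}(\lambda)(f)|\le\|f\|$. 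For the last claim, $\sigma_{\mathcal A}(f)=\hat f(\mathfrak M(\mathcal A))=F(\overline{\mathbb D})=f(\overline{\mathbb D})$, using the homeomorphism and the identity of the spectrum with the range of the Gelfand transform.

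The step that deserves the most care, and where I expect the only genuine subtlety, is the identification $F(\lambda)=f(\lambda)$ at interior points, i.e. that the Gelfand transform really recovers the holomorphic function rather than some abstract extension. The clean way past it is exactly the observation above: pointwise multiplication makes each interior evaluation a character, so automatic continuity of characters does the work and one never has to argue separately that norm convergence in $\mathcal A$ forces local uniform convergence. Everything else is a routine application of the Gelfand machinery together with the homeomorphism $\mathfrak M(\mathcal A)\cong\overline{\mathbb D}$.
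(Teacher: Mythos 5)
Your proposal is correct and follows essentially the same route as the paper: both identify the maximal ideal space with $\overline{\mathbb D}$ via the character-evaluation-at-$z$ map (the paper cites Conway's result on singly generated Banach algebras where you verify the homeomorphism directly), both use that pointwise multiplication makes interior evaluations into automatically continuous characters, and both read off the three conclusions from the Gelfand transform transported through this homeomorphism. No gaps.
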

\begin{proof}
Let $\Sigma$ be the maximal ideal space of $\mathcal A,$ that is, the collection of all non-zero homomorphisms of $\mathcal A$ into $\mathbb C.$ Consider $\Sigma$ with the weak* topology that it inherits as a subset of $\mathcal A^*.$ It is well known that $\Sigma$ is a compact Hausdorff space, see \cite[Ch. VII, Theorem 8.6]{Con}. Since 
$\mathcal A \subseteq \mathcal O(\mathbb D),$ it follows that $ev_{\lambda}$ belong to $\Sigma$ for every $\lambda\in\mathbb D.$  By our assumption the co-ordinate function $z$ is a generator for the algebra $\mathcal A.$ By \cite[Ch. VII, Proposition 8.10]{Con}, we have that there is a homeomorphism $\tau: \Sigma \to \overline{\mathbb D}$ such that $\tau(ev_{\lambda})= \lambda.$ Furthermore, it follows that the Gelfand transform $\gamma : \mathcal A \to \mathscr C (\overline{\mathbb D}),$ satisfies the relation $\gamma(p)= p$ for every polynomial $p$ in $\mathbb C[z].$  Now let $f\in\mathcal A.$ For each $\lambda\in \mathbb D,$ we have that 
\begin{align*}
f(\lambda)= ev_{\lambda}(f)= \big(\tau^{-1}(\lambda)\big)(f).
\end{align*}
So for each $\lambda \in \mathbb T,$ it is natural to define $f(\lambda):=\big(\tau^{-1}(\lambda)\big)(f).$ 
Since $\tau$ is a homeomorphism, it follows that $f$ extends as a function in $\mathscr C(\overline{\mathbb D}).$
Note that $ev_{\lambda}= \tau^{-1}(\lambda)\in \Sigma$ for every $\lambda\in \overline{\mathbb D}.$ 
Hence the continuity of  $ev_{\lambda},$ for each $\lambda \in \overline{\mathbb D},$ follows from \cite[Ch. VII, Proposition 8.4]{Con}. Finally, in view of \cite[Ch. VII, Theorem 8.6]{Con}, it follows that 
\begin{align*}
\sigma_{\mathcal A}(f) = \{h(f): h\in\Sigma\}= \{\tau^{-1}(\lambda)(f): \lambda\in\overline{\mathbb D}\}= \{f(\lambda) :\lambda\in\overline{\mathbb D}\}.
\end{align*}
This completes the proof.
\end{proof}

As an application of the aforementioned proposition, we obtain that every functional Hilbert space $\mathscr H_{\kappa}$ with property (A), which is also an algebra under pointwise product, must lie in the disc algebra. Furthermore, for such instances, one can describe all the cyclic vectors in $\mathscr H_{\kappa}.$ Recall that a vector $f$ in $\mathscr H_{\kappa}$ is said to be cyclic if $\mathscr H_{\kappa}$ is the smallest closed $M_z$-invariant subspace containing $f.$ The following results can be thought as a straightforward generalization of the results in \cite[Corollary 2, Corollary 3, p. 95]{SH}. 

\begin{theorem}
Let $\mathscr H_{\kappa}$ be a functional Hilbert space with property (A). Suppose $\mathscr H_{\kappa}$ is an algebra under pointwise product and the spectrum $\sigma(M_z)= \overline{\mathbb D}.$ Then $\mathscr H_{\kappa} \subseteq \mathscr C(\overline{\mathbb D})$ and every closed $M_z$-invariant subspace of $\mathscr H_{\kappa}$ is an ideal.  Furthermore, for a vector $f\in\mathscr H_{\kappa},$ the following are equivalent:
\begin{itemize}
\item[(a)] $f$ is cyclic.
\item[(b)] $f$ is invertible in $\text{Mult\,}(\mathscr H_{\kappa}).$
\item[(c)] $f$ has no zero in $\overline{\mathbb D}.$
\end{itemize}
\end{theorem}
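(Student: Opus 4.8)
The plan is to first upgrade $\mathscr H_{\kappa}$ to a unital commutative Banach algebra to which Proposition \ref{algebra is in disc algebra} applies, and then read off every conclusion. Since $\kappa(\cdot,0)=1$ gives $1\in\mathscr H_{\kappa}$, and the hypothesis that $\mathscr H_{\kappa}$ is an algebra means every $f\in\mathscr H_{\kappa}$ multiplies $\mathscr H_{\kappa}$ into itself, I would first observe that $\mathscr H_{\kappa}=\text{Mult\,}(\mathscr H_{\kappa})$ as sets. The inequality $\|\varphi\|_{\mathscr H_{\kappa}}=\|\varphi\cdot 1\|_{\mathscr H_{\kappa}}\leqslant\|\varphi\|_{op}\|1\|_{\mathscr H_{\kappa}}$, together with $\|1\|_{\mathscr H_{\kappa}}=\sqrt{\kappa(0,0)}=1$, shows that the identity map $(\text{Mult\,}(\mathscr H_{\kappa}),\|\cdot\|_{op})\to(\mathscr H_{\kappa},\|\cdot\|_{\mathscr H_{\kappa}})$ is a continuous bijection of Banach spaces, so by the bounded inverse theorem the two norms are equivalent. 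Thus $(\mathscr H_{\kappa},\|\cdot\|_{op})$ is a unital commutative Banach algebra in which $\mathbb C[z]$ is dense, by (A3) and norm equivalence. The one genuinely non-cosmetic point here is to check that the spectrum $\sigma_{\mathscr H_{\kappa}}(z)$ computed in this algebra equals $\sigma(M_z)=\overline{\mathbb D}$; this follows because $\text{Mult\,}(\mathscr H_{\kappa})$ is isometrically the commutant $\{M_z\}'$ in $\mathscr B(\mathscr H_{\kappa})$, and the commutant of a single operator is inverse-closed, so invertibility of $M_{z-\lambda}$ in $\mathscr B(\mathscr H_{\kappa})$ coincides with invertibility of $z-\lambda$ in $\text{Mult\,}(\mathscr H_{\kappa})$.

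With these hypotheses verified, Proposition \ref{algebra is in disc algebra} applied to $\mathcal A=\mathscr H_{\kappa}$ yields at once the first assertion $\mathscr H_{\kappa}\subseteq\mathscr C(\overline{\mathbb D})$, the continuity of $ev_{\lambda}$ for every $\lambda\in\overline{\mathbb D}$, and the spectral identity $\sigma_{\mathscr H_{\kappa}}(f)=f(\overline{\mathbb D})$ for each $f\in\mathscr H_{\kappa}$. For the ideal property, let $\mathcal W$ be a closed $M_z$-invariant subspace and $f\in\mathcal W$. Invariance gives $pf\in\mathcal W$ for every polynomial $p$; given $\varphi\in\mathscr H_{\kappa}=\text{Mult\,}(\mathscr H_{\kappa})$, I would choose polynomials $p_j\to\varphi$ in $\|\cdot\|_{op}$ (possible by $\|\cdot\|_{op}$-density of $\mathbb C[z]$) and estimate $\|p_jf-\varphi f\|_{\mathscr H_{\kappa}}\leqslant\|p_j-\varphi\|_{op}\|f\|_{\mathscr H_{\kappa}}\to 0$, so $\varphi f\in\mathcal W$ since $\mathcal W$ is closed. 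Hence $\mathcal W$ is an ideal.

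Finally I would close the loop of equivalences. The equivalence of (b) and (c) is immediate from $\sigma_{\mathscr H_{\kappa}}(f)=f(\overline{\mathbb D})$: the element $f$ is invertible in $\text{Mult\,}(\mathscr H_{\kappa})$ exactly when $0\notin\sigma_{\mathscr H_{\kappa}}(f)$, that is, exactly when $f$ has no zero on $\overline{\mathbb D}$. For (b)$\Rightarrow$(a), if $g\in\text{Mult\,}(\mathscr H_{\kappa})$ satisfies $gf=1$, then the ideal property forces $1=gf$ to lie in the smallest closed $M_z$-invariant subspace $\mathcal W_f$ containing $f$; since $\mathcal W_f$ then contains $p\cdot 1=p$ for all polynomials and $\mathbb C[z]$ is dense, $\mathcal W_f=\mathscr H_{\kappa}$ and $f$ is cyclic. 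For (a)$\Rightarrow$(c), suppose $f(\lambda)=0$ for some $\lambda\in\overline{\mathbb D}$; since $ev_{\lambda}$ is continuous (by the reproducing kernel when $\lambda\in\mathbb D$ and by Proposition \ref{algebra is in disc algebra} when $\lambda\in\mathbb T$), the kernel $\{g\in\mathscr H_{\kappa}:g(\lambda)=0\}$ is a proper closed $M_z$-invariant subspace, proper because it omits the constant $1$ and invariant because $(zg)(\lambda)=\lambda g(\lambda)=0$; as it contains $f$, this gives $\mathcal W_f\neq\mathscr H_{\kappa}$, contradicting cyclicity. I expect the main obstacle to be the bookkeeping of the first paragraph, namely correctly identifying $\mathscr H_{\kappa}$ with $\text{Mult\,}(\mathscr H_{\kappa})$ as a unital Banach algebra and matching the spectral hypothesis of Proposition \ref{algebra is in disc algebra} through inverse-closedness of the commutant; once this is in place, the ideal property and the three equivalences follow by routine Banach-algebra and reproducing-kernel arguments.
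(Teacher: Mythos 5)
Your proposal is correct and follows essentially the same route as the paper: identify $\mathscr H_{\kappa}$ with $\text{Mult\,}(\mathscr H_{\kappa})$, obtain equivalence of $\|\cdot\|_{\mathscr H_{\kappa}}$ and $\|\cdot\|_{op}$ via the bounded inverse theorem, match $\sigma_{\mathscr H_{\kappa}}(z)$ with $\sigma(M_z)$ through the commutant, invoke Proposition \ref{algebra is in disc algebra}, and settle the ideal property and the three equivalences by polynomial density and the maximal-ideal/evaluation-functional dichotomy. Your explicit remark that the commutant of a single operator is inverse-closed makes precise a step the paper leaves implicit, but the argument is otherwise the same.
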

\begin{proof}
Note that  $\mathscr H_{\kappa}$ is an algebra under pointwise product if and only if $\mathscr H_{\kappa}= \text{Mult\,}(\mathscr H_{\kappa}).$  Since $\|1\|_{\mathscr H_{\kappa}}= \kappa (0,0)=1,$ it follows that $\|f\|_{\mathscr H_{\kappa}} \leqslant \|M_f\|=\|f\|_{op}$ for every $f\in \mathscr H_{\kappa}.$ This shows that the inclusion map from $\big(\text{Mult\,}(\mathscr H_{\kappa}),\|\cdot\|_{op}\|\big)$ onto $\big(\mathscr H_{\kappa},\|\cdot\|_{\mathscr H_{\kappa}}\big)$ is continuous. By an application of open mapping theorem it follows that  the two norms $\|\cdot\|_{op}$ and $\|\cdot\|_{\mathscr H_{\kappa}}$ on $\mathscr H_{\kappa}$ are equivalent. By assumption (A3), it now follows that the set of all polynomials is dense in $\big(\text{Mult\,}(\mathscr H_{\kappa}),\|\cdot\|_{op}\|\big)$ as well. Since  $\text{Mult\,}(\mathscr H_{\kappa})$  coincides with the commutant of $M_z$ in  $\mathcal B(\mathcal H_{\kappa}),$  it follows that the spectrum of the co-ordinate function $z$ in the multiplier algebra $\text{Mult\,}(\mathscr H_{\kappa})$ is equal to the spectrum of $M_z$ in $\mathscr B(\mathscr H_{\kappa}).$ Now applying  Proposition \ref{algebra is in disc algebra} to the abelian Banach algebra $\mathcal A = \big(\text{Mult\,}(\mathscr H_{\kappa}),\|\cdot\|_{op}\big),$ we obtain that $\mathscr H_{\kappa}= \text{Mult\,}(\mathscr H_{\kappa})\subseteq \mathscr C(\overline{\mathbb D}).$
For the proof of the second part, consider an element $f\in \mathscr H_{\kappa}.$ Let $\mathscr I(f)$ denote the smallest closed $M_z$-invariant subspace of $\mathscr H_{\kappa},$ containing $f,$ that is,
\begin{align*}
\mathscr I(f)= \bigvee\{z^nf: n\in \mathbb Z_{\geqslant 0}\}.
\end{align*}
To show every closed $M_z$-invariant subspace is an ideal, it is sufficient to show that $\mathscr I(f)$ is an ideal. 
As the two norms $\|\cdot\|_{op}$ and $\|\cdot\|_{\mathscr H_{\kappa}}$ on $\mathscr H_{\kappa}$ are equivalent and the set of all polynomials is dense in $\mathscr H_{\kappa},$ it follows that $\mathscr I(f)$ is an ideal. 
Since $\mathscr I(f)$ is a closed ideal in the Banach algebra $\mathcal A = \big(\text{Mult\,}(\mathscr H_{\kappa}),\|\cdot\|_{op}\|\big)$, then either one of the following two mutually exclusive event must holds.
\begin{itemize}
\item[(i)] $\mathscr I(f)= \mathscr H_{\kappa},$ 
\item[(ii)] $\mathscr I(f)$ is contained in a maximal ideal, that is, $\mathscr I(f)\subseteq \ker (ev_{\lambda})$ for some $\lambda\in\overline{\mathbb D}.$ In particular $f(\lambda)=0$ for some $\lambda\in\overline{\mathbb D}.$ 
\end{itemize}
Hence we obtain that an element $f\in \mathscr H_{\kappa},$ is cyclic , that is, $\mathscr I(f)= \mathscr H_{\kappa}$ if and only if $f$ has no zeros in $\overline{\mathbb D}.$ Applying Proposition \ref{algebra is in disc algebra}, we also have that $\sigma_{\mathcal A}(f)= \{f(\lambda): \lambda\in \overline{\mathbb D}\}.$ Thus $f$ has no zeros in $\overline{\mathbb D}$ if and only if $f$ is invertible in $\text{Mult\,}(\mathscr H_{\kappa}).$ This completes the proof.
\end{proof}

\section{Codimension $k$ property of Invariant subspaces}\label{Cod}
This section is an attempt to study the codimension $k$ property of  non-zero closed $M_z$-invariant subspaces of $\mathcal H_{\pmb\mu}$  for any $\pmb \mu =(\mu_1,\ldots,\mu_{m})$ with $\mu_j\in\mathcal M_+(\mathbb T)$ for $j=1,\ldots,m.$  
Since $M_z|_{_\mathcal M}$ is an analytic, $(m+1)$-isometry for every non-zero $\mathcal M$ in $Lat(M_z,\mathcal H_{\pmb\mu}),$ it turns out that the dimension of  $\ker \big ((M_z|_{_\mathcal M})^*-\bar{\lambda}\big),$ that is, the dimension of $\mathcal M\ominus (z-\lambda)\mathcal M,$ is a positive integer independent of $\lambda \in\mathbb D,$ see \cite[Proposition 6.4]{GGR}. Thus any $\mathcal M$ in $Lat(M_z,\mathcal H_{\pmb \mu})$ has codimension $k$ property if and only if $ dim (\mathcal M \ominus z\mathcal M)=k.$ In case of $m=1,$ that is, $\pmb\mu=\mu_1$ and $\mu_1\in \mathcal M_{+}(\mathbb T),$ it is well known that for any non-zero $\mathcal M$ in $Lat(M_z,\mathcal H_{\pmb\mu}),$ $\mathcal M$ has codimension $1$ property, see \cite[Theorem 3.2]{RS-92}. Here, firstly we show that for any $m\geqslant 3$ and any non-zero $\mathcal M$ in $Lat(M_z,\mathcal H_{\pmb\mu})$ has codimension $1$ property.


\begin{theorem}\label{algebra cod}
Let $\pmb \mu =(\mu_1,\ldots,\mu_{m})$ be an $m$-tuple of finite non-negative Borel measure on $\mathbb T.$ Suppose $\mathcal H_{\pmb\mu}$ is an algebra under pointwise product, that is, $\mathcal H_{\pmb\mu}= Mult(\mathcal H_{\pmb\mu}).$ Then for every 
non-zero $\mathcal M$ in $Lat(M_z,\mathcal H_{\pmb \mu}),$ we have $\dim (\mathcal M\ominus z\mathcal M) = 1.$ In particular if $m\geqslant 3$ and $\mu_m\neq 0,$ then $\dim (\mathcal M\ominus z\mathcal M) = 1$ for any $\mathcal M$ in $Lat(M_z,\mathcal H_{\pmb\mu}).$
\end{theorem}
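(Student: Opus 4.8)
The plan is to fix a nonzero $\mathcal M\in Lat(M_z,\mathcal H_{\pmb\mu})$ and exploit the fact recorded just above the theorem that $\dim(\mathcal M\ominus(z-\lambda)\mathcal M)$ is independent of $\lambda\in\mathbb D$ (see \cite[Proposition 6.4]{GGR}); thus it suffices to compute this dimension at one convenient point. Since $\mathcal M\neq\{0\}$ consists of holomorphic functions, its common zero set in $\mathbb D$ is contained in the zero set of a single nonzero element and is therefore discrete, so I may choose $\lambda\in\mathbb D$ together with some $g\in\mathcal M$ satisfying $g(\lambda)\neq 0$. As $\mathcal H_{\pmb\mu}$ is a reproducing kernel Hilbert space, $ev_\lambda$ is bounded, and it restricts to a surjection of $\mathcal M$ onto $\mathbb C$ whose kernel $\mathcal M_\lambda:=\{f\in\mathcal M:f(\lambda)=0\}$ has codimension one in $\mathcal M$. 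Because $M_z|_{\mathcal M}$ is an analytic $(m+1)$-isometry, $M_z-\lambda$ is bounded below on $\mathcal M$ for $\lambda\in\mathbb D$, so $(z-\lambda)\mathcal M$ is a closed subspace, clearly contained in $\mathcal M_\lambda$. Consequently
\[
\dim\big(\mathcal M\ominus(z-\lambda)\mathcal M\big)=\dim\big(\mathcal M/(z-\lambda)\mathcal M\big)=1+\dim\big(\mathcal M_\lambda/(z-\lambda)\mathcal M\big),
\]
so the entire theorem reduces to the single identity $\mathcal M_\lambda=(z-\lambda)\mathcal M$.

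Equivalently, I must show that if $f\in\mathcal M$ vanishes at $\lambda$ then $h:=f/(z-\lambda)$ lies in $\mathcal M$. First, $h$ automatically belongs to $\mathcal H_{\pmb\mu}$, since the whole space has the interior division property that $f/(z-\lambda)\in\mathcal H_{\pmb\mu}$ whenever $f\in\mathcal H_{\pmb\mu}$ and $f(\lambda)=0$ (this is the codimension-one statement for $\mathcal H_{\pmb\mu}$ itself). The algebra hypothesis now enters: since $\mathcal H_{\pmb\mu}=\text{Mult}(\mathcal H_{\pmb\mu})$, both $h$ and $g$ are multipliers, and choosing polynomials $p_j\to h$ in $\mathcal H_{\pmb\mu}$ (property (A3)) we get $p_jg\in\mathcal M$ with $p_jg\to hg$, so $hg\in\mathcal M$. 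The same computation shows $h\mathcal M\subseteq\mathcal M$; equivalently, by the structural theorem of the previous section, $\mathcal M$ is an ideal of $\mathcal H_{\pmb\mu}$.

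The main obstacle is to upgrade the inclusion $h\mathcal M\subseteq\mathcal M$ to the membership $h\in\mathcal M$ (if this failed, i.e. $\mathcal M_\lambda\supsetneq(z-\lambda)\mathcal M$, the index would exceed one). This is exactly the point where the algebra hypothesis cannot be dispensed with: for spaces that are \emph{not} algebras, such as the Bergman space, the analogous division fails and invariant subspaces of arbitrarily large index occur, so any correct argument must use the multiplicative structure essentially. I would resolve this crux through the wandering-subspace description $\mathcal M=\bigvee_{n\geqslant 0}z^n(\mathcal M\ominus z\mathcal M)$ available for analytic $(m+1)$-isometries, combined with the extremal (generating) function of $\mathcal M$: concretely, were $\dim(\mathcal M\ominus z\mathcal M)\geqslant 2$ one could (after arranging $0\notin Z(\mathcal M)$) produce a nonzero $e\in\mathcal M\ominus z\mathcal M$ with $e(0)=0$, whence the division lemma would give $e/z\in\mathcal M$ and so $e\in z\mathcal M$, contradicting $0\neq e\perp z\mathcal M$. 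Establishing the division lemma $\mathcal M_\lambda=(z-\lambda)\mathcal M$ by these means, in the spirit of the $m=1$ argument of Richter and Sundberg \cite{RS-92}, is the heart of the proof.

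Finally, the concluding assertion is immediate from the general statement: when $m\geqslant 3$ and $\mu_m\neq 0$, Corollary \ref{algebra for tuple} guarantees that $\mathcal H_{\pmb\mu}$ is an algebra, so the hypothesis of the theorem is satisfied and every nonzero $\mathcal M$ in $Lat(M_z,\mathcal H_{\pmb\mu})$ has $\dim(\mathcal M\ominus z\mathcal M)=1$.
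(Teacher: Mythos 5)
Your reduction of the theorem to the division identity $\mathcal M_\lambda=(z-\lambda)\mathcal M$ is sound, and you correctly locate where the algebra hypothesis enters: equivalence of the multiplier and Hilbertian norms plus polynomial density shows that every closed $M_z$-invariant subspace is an ideal, so $h\mathcal M\subseteq\mathcal M$. But the step you yourself flag as ``the heart of the proof'' --- upgrading $h\mathcal M\subseteq\mathcal M$ to $h\in\mathcal M$, equivalently proving $\mathcal M_\lambda\subseteq(z-\lambda)\mathcal M$ --- is never actually carried out, and the sketch you offer for it is circular. You argue: if $\dim(\mathcal M\ominus z\mathcal M)\geqslant 2$, produce $e\in\mathcal M\ominus z\mathcal M$ with $e(0)=0$, and then ``the division lemma would give $e/z\in\mathcal M$,'' a contradiction. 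The division lemma invoked there is precisely the statement $\mathcal M_0\subseteq z\mathcal M$ for the invariant subspace $\mathcal M$, i.e.\ exactly the assertion under proof (for the full space $\mathcal H_{\pmb\mu}$ the division property is known from the Cowen--Douglas class membership, but that only gives $e/z\in\mathcal H_{\pmb\mu}$, not $e/z\in\mathcal M$). So the proposal establishes that $\mathcal M$ is an ideal and that the index is constant in $\lambda$, but does not close the loop from ``ideal'' to ``index one.''

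That missing implication is nontrivial, and it is exactly what the paper outsources to Bourdon \cite{Bourdon}: since every $[f]$ absorbs multiplication by every $\varphi\in\mathcal H_{\pmb\mu}$, any two nonzero invariant subspaces $[f]$ and $[g]$ contain the nonzero element $fg$, so $M_z$ is cellular-indecomposable (\cite[Proposition 1]{Bourdon}), and \cite[Theorem 1]{Bourdon} then yields $\dim(\mathcal M\ominus z\mathcal M)=1$ for every nonzero invariant subspace. If you want to avoid citing Bourdon you would have to reprove his theorem (in the spirit of \cite{RS-92}), which is substantially more work than the paragraph you devote to it. Your final paragraph deducing the case $m\geqslant 3$, $\mu_m\neq 0$ from Corollary \ref{algebra for tuple} agrees with the paper and is fine.
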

\begin{proof}
Suppose $\mathcal H_{\pmb\mu}$ is an algebra under pointwise product, that is, $\mathcal H_{\pmb\mu}= Mult(\mathcal H_{\pmb\mu}).$ By an application of Closed Graph Theorem  the multipliers norm $\|\cdot\|_{op}$ and the Hilbertian norm $\|\cdot\|_{\pmb\mu}$ on  $\mathcal H_{\pmb\mu}$ are equivalent.  In view of  \cite[Corollary 5.3]{GGR}, we obtain that the set of all polynomials is dense in $\mathcal H_{\pmb\mu}$ w.r.t the multiplier norm $\|\cdot\|_{op}.$ For any $f\in\mathcal H_{\pmb \mu},$ let $[f]$ denotes the smallest closed $M_z$-invariant subspace containing $f$ in $\mathcal H_{\pmb\mu}.$ Hence it follows that $\varphi [f] \subseteq [f]$ for every $\varphi,f \in \mathcal H_{\pmb \mu}.$ Applying \cite[Proposition 1]{Bourdon} to $\mathcal H_{\pmb\mu}$, we get that $\mathcal H_{\pmb\mu}$ is a cellular-indecomposable space. Now the first part of the theorem follows immediately from \cite[Theorem 1]{Bourdon}. The last part follows using the first part together with Corollary \ref{algebra for tuple}.
\end{proof}
For the remaining case $m=2,$ that is, $\pmb \mu =(\mu_1,\mu_{2})$ with $\mu_1,\mu_2\in\mathcal M_+(\mathbb T),$ we answer this question partially in Theorem \ref{case m=2}. We start with a basic lemma which will be useful in the proof of Theorem \ref{case m=2}.
\begin{lemma}\label{Rank-Nullity}
Let $H$ be a Hilbert space and $V$ be a closed subspace of $H$ with $\dim V^\perp=n.$ Let $W$ be another subspace of $H$, then $\dim (W\ominus (V\cap W))\leqslant n.$
\end{lemma}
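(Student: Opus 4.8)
The plan is to exhibit an injective linear map from $W\ominus(V\cap W)$ into the finite-dimensional space $V^\perp$; since $\dim V^\perp = n$, this immediately forces $\dim\big(W\ominus(V\cap W)\big)\le n$. The natural candidate for such a map is the orthogonal projection $P$ of $H$ onto $V^\perp$, which is well defined precisely because $V$ is assumed to be closed, and whose kernel is exactly $V$.

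First I would restrict $P$ to the subspace $W\ominus(V\cap W)$, that is, to those $x\in W$ that are orthogonal to every element of $V\cap W$. The only property of $P$ that the argument will use is that $Px=0$ implies $x\in\ker P=V$.

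The key step is to verify injectivity of $P$ on this subspace. Suppose $x\in W\ominus(V\cap W)$ with $Px=0$. Then $x\in\ker P=V$, and combined with $x\in W$ this gives $x\in V\cap W$. On the other hand, $x\in W\ominus(V\cap W)$ means $x\perp(V\cap W)$; in particular $\langle x,x\rangle=0$, whence $x=0$. Thus $P$ is injective on $W\ominus(V\cap W)$, and a one-to-one linear map into a space of dimension $n$ forces the domain to have dimension at most $n$, which is the desired conclusion.

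There is essentially no serious obstacle here: the argument is a purely Hilbert-space orthogonality computation, and the only hypothesis that is genuinely used is the closedness of $V$ (to guarantee that the orthogonal projection onto $V^\perp$ exists with kernel exactly $V$). I would only need to take care that $W$ itself is not assumed closed, so that $W\ominus(V\cap W)$ must be read as $\{x\in W: x\perp V\cap W\}$ rather than as an orthogonal complement inside a closed ambient space; the injectivity argument is insensitive to this point.
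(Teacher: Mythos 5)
Your proof is correct and follows essentially the same route as the paper: both restrict the orthogonal projection onto $V^\perp$ to $W\ominus(V\cap W)$ and verify injectivity by noting that a vector in the kernel lies in $V\cap W$ while also being orthogonal to $V\cap W$. Your write-up just makes explicit the final step ($x\in V\cap W$ and $x\perp V\cap W$ forces $x=0$) that the paper leaves implicit.
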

\begin{proof}
Let $P$ denote the orthogonal projection of $H$ onto $V^\perp$. Suppose $X=W\ominus (V\cap W).$ We claim that $P_{|_X}$ is an injective operator. To verify the claim, let $x\in X$ be such that $P_{|_X}(x)=0$. Since $\ker P=V,$ $x\in V.$ This implies $x=0.$ Thus claims stands verified and therefore $\dim (W\ominus (V\cap W))\leqslant \dim V^\perp =n$.
\end{proof}


\begin{lemma}\label{restriction}
Let $\mu$ be a finite positive Borel measure on the unit circle $\mathbb T$. Let $n\in\mathbb N,$ $\lambda_1,\ldots, \lambda_n\in \mathbb T$ be distinct points and $c_1,\ldots, c_n$ be positive real numbers. Let $\pmb\mu$ be the tuple $(\mu, \sum_{j=1}^{n}c_j \delta_{\lambda_j})$.
Then the subspace $\ker( {M_z^*}^2M_z^2 - 2 M_z^* M_z+I) = \mathcal V$ (say) is a closed $M_z$-invariant subspace of $\mathcal H_{\boldsymbol \mu}$ and the operator $M_z|_{\mathcal V}$ is a cyclic analytic $2$-isometry. Moreover, $\dim \mathcal V^\perp = n.$
\end{lemma}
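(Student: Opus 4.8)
The plan is to identify $\mathcal V$ explicitly as the subspace of functions in $\mathcal H_{\pmb\mu}$ whose boundary values vanish at each of $\lambda_1,\dots,\lambda_n$, and then read off every asserted property from this description. The crux is a second-difference computation. Writing $\Delta:=M_z^{*2}M_z^2-2M_z^*M_z+I$, so that $\langle \Delta f,f\rangle_{\pmb\mu}=\|M_z^2f\|_{\pmb\mu}^2-2\|M_zf\|_{\pmb\mu}^2+\|f\|_{\pmb\mu}^2$, I would exploit the splitting $\|\cdot\|_{\pmb\mu}^2=\|\cdot\|_{H^2}^2+D_{\mu_1,1}(\cdot)+D_{\nu,2}(\cdot)$ with $\nu=\sum_j c_j\delta_{\lambda_j}$, which decomposes this quadratic form into three second differences. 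The $H^2$-difference vanishes because $M_z$ is isometric on $H^2$, and the $D_{\mu_1,1}$-difference vanishes because $M_z$ is a $2$-isometry on $\mathcal H_{\mu_1,1}$. For the $D_{\nu,2}$-term I would use the local Douglas formula (Theorem \ref{generalized local Douglas formula}): setting $g=\frac{f-f^*(\lambda)}{z-\lambda}$ and using $(zf)^*(\lambda)=\lambda f^*(\lambda)$, one gets $D_{\lambda,1}(zf)=D_{\lambda,1}(f)+|f^*(\lambda)|^2$ and $D_{\lambda,2}(zf)=D_{\lambda,2}(f)+D_{\lambda,1}(f)$. Iterating these two identities yields $D_{\lambda,2}(z^2f)-2D_{\lambda,2}(zf)+D_{\lambda,2}(f)=|f^*(\lambda)|^2$, and summing over the atoms of $\nu$ gives $\langle\Delta f,f\rangle_{\pmb\mu}=\sum_{j=1}^n c_j|f^*(\lambda_j)|^2$. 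I expect this identity to be the main obstacle; everything else is routine once it is in hand.

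Since each $c_j>0$, this shows $\Delta\geqslant 0$ and $\mathcal V=\ker\Delta=\{f\in\mathcal H_{\pmb\mu}:f^*(\lambda_j)=0,\ j=1,\dots,n\}$. This is closed, being the kernel of a bounded operator, and it is $M_z$-invariant because $(zf)^*(\lambda_j)=\lambda_j f^*(\lambda_j)=0$. For $f\in\mathcal V$ the defect form of $M_z|_{\mathcal V}$ equals $\langle\Delta f,f\rangle_{\pmb\mu}=0$, and being positive semidefinite it vanishes identically, so $M_z|_{\mathcal V}$ is a $2$-isometry. Analyticity transfers to the restriction since $\bigcap_k (M_z|_{\mathcal V})^k\mathcal V\subseteq\bigcap_k M_z^k\mathcal H_{\pmb\mu}=\{0\}$, the last equality coming from the analyticity of $M_z$ on $\mathcal H_{\pmb\mu}$.

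To compute the codimension, I would note that the evaluation functionals $L_j(f):=f^*(\lambda_j)$ are bounded on $\mathcal H_{\pmb\mu}$, because $c_j|f^*(\lambda_j)|^2\leqslant\langle\Delta f,f\rangle_{\pmb\mu}\leqslant\|\Delta\|\,\|f\|_{\pmb\mu}^2$, and linearly independent, since the Lagrange interpolation polynomials $p_i$ with $p_i(\lambda_j)=\delta_{ij}$ lie in $\mathcal H_{\pmb\mu}$ and satisfy $L_j(p_i)=\delta_{ij}$. Hence $\mathcal V=\bigcap_{j=1}^n\ker L_j$ has codimension $n$, that is, $\dim\mathcal V^\perp=n$.

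Finally, for cyclicity I would set $B(z)=\prod_{j=1}^n(z-\lambda_j)$ and claim that $B$ is cyclic for $M_z|_{\mathcal V}$, i.e. $\overline{B\,\mathbb C[z]}=\mathcal V$. The inclusion $\overline{B\,\mathbb C[z]}\subseteq\mathcal V$ is immediate since every $pB$ is a polynomial vanishing at each $\lambda_j$. For the reverse inclusion, given $f\in\mathcal V$ I would choose polynomials $q_k\to f$ in $\mathcal H_{\pmb\mu}$ (polynomials are dense because $M_z$ is cyclic with cyclic vector $1$); by continuity of the $L_j$ we have $q_k(\lambda_j)\to 0$, so the degree-$(n-1)$ corrections $r_k:=\sum_{j=1}^n q_k(\lambda_j)p_j$ satisfy $\|r_k\|_{\pmb\mu}\to 0$, while $q_k-r_k$ vanishes at every $\lambda_j$ and hence lies in $B\,\mathbb C[z]$ and converges to $f$. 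This gives $\mathcal V=\overline{B\,\mathbb C[z]}$, so $M_z|_{\mathcal V}$ is a cyclic analytic $2$-isometry, completing the plan.
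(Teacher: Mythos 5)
Your proposal is correct and arrives at the same pivotal description $\mathcal V=\{f\in\mathcal H_{\pmb\mu}:f^*(\lambda_j)=0,\ 1\leqslant j\leqslant n\}$ and the same codimension count as the paper, but by a genuinely more self-contained route in two places. First, where the paper gets the identity $\langle\Delta f,f\rangle_{\pmb\mu}=\sum_j c_j|f^*(\lambda_j)|^2$ by citing \cite[Proposition 2.7]{GGR}, and the positivity of $\Delta$ together with the invariance of $\mathcal V$ and the $2$-isometry property of $M_z|_{\mathcal V}$ from \cite[Propositions 1.5 and 1.6]{AglerStan1}, you derive everything from the local Douglas formula of Theorem \ref{generalized local Douglas formula} via the difference identities $D_{\lambda,1}(zf)=D_{\lambda,1}(f)+|f^*(\lambda)|^2$ and $D_{\lambda,2}(zf)=D_{\lambda,2}(f)+D_{\lambda,1}(f)$; these are correct, and your domination $c_j|f^*(\lambda_j)|^2\leqslant\|\Delta\|\,\|f\|_{\pmb\mu}^2$ is a cleaner proof of boundedness of the evaluation functionals than the paper's appeal to the uniform boundedness principle. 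Second, and more substantively, for cyclicity the paper shows $\dim(\mathcal V\ominus z\mathcal V)=1$ by noting that $M_z$ belongs to the Cowen--Douglas class $B_1(\mathbb D)$ and that $\mathcal V$ has finite codimension \cite[p. 71]{CDM}, and then invokes \cite[Theorem 1]{Rinv} to conclude that an analytic $2$-isometry with one-dimensional defect space is cyclic; you instead exhibit the explicit cyclic vector $B=\prod_{j=1}^n(z-\lambda_j)$ and prove $\overline{B\,\mathbb C[z]}=\mathcal V$ by correcting approximating polynomials with Lagrange interpolants. Your route buys an explicit cyclic vector and avoids the Cowen--Douglas and Richter machinery, at the price of invoking polynomial density in $\mathcal H_{\pmb\mu}$ (\cite[Corollary 5.3]{GGR}); the paper's route is shorter given the cited results and yields the defect-dimension statement $\dim(\mathcal V\ominus z\mathcal V)=1$ explicitly, which is what is actually reused in the proof of Theorem \ref{case m=2}. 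Both arguments are valid.
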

\begin{proof}
From \cite[Theorem 4.1]{GGR}, we know that $M_z$ on $\mathcal H_{\pmb\mu}$ is a cyclic analytic $3$-isometry. Thus, by \cite[Proposition 1.6]{AglerStan1}, it follows that $\mathcal V$ is a closed $M_z$-invariant subspace of $\mathcal H_{\pmb\mu}$ and the operator $M_z|_{\mathcal V}$ is a  $2$-isometry. Since $M_z$ on $\mathcal H_{\pmb\mu}$ is analytic, $M_z|_{\mathcal V}$ is also analytic. In view of \cite[Theorem 1]{Rinv}, to show $M_z|_{\mathcal V}$ is cyclic, it is sufficient to show that $dim(\mathcal V\ominus z\mathcal V)=1.$

Before proving that $dim(\mathcal V\ominus z\mathcal V)=1,$
first we show that $\dim \mathcal V^\perp = n$. In this regard,  note that $\mathcal H_{\pmb \mu}\subseteq \mathcal H_{\lambda_j, 2}$ for each $j=1,\ldots,n.$ Thus, by Theorem \ref{generalized local Douglas formula}, $f^*(\lambda_j)$ exists for all $f\in\mathcal H_{\pmb \mu}$ and for every $j=1,\ldots,n$. Since $\mathcal H_{\pmb \mu}$ is a reproducing kernel Hilbert space on $\mathbb D,$ the linear functional $f \mapsto f(r\lambda_j)$ is bounded on $\mathcal H_{\pmb \mu}$ for any $0 < r <1 $ and for each $j=1,\ldots,n.$ An application of uniform boundedness principle gives that $f\mapsto f^*(\lambda_j)$ is a bounded linear functional on $\mathcal H_{\pmb \mu}$ for each $j=1,\ldots,n$.
Therefore, by Riesz representation theorem, there exist $h_1,\ldots,h_n\in \mathcal H_{\pmb \mu}$ such that 
\begin{equation}\label{eqn16}
f^*(\lambda_j)=\langle f, h_j\rangle, ~~f\in \mathcal H_{\pmb\mu},\,\,\, j=1,\ldots,n.
\end{equation}
Since $M_z$ on $\mathcal H_{\pmb\mu}$ is a $3$-isometry, by \cite[Proposition 1.5]{AglerStan1}, the operator 
${M_z^*}^2M_z^2 - 2 M_z^* M_z+I$ is positive. Thus
a function $f$ of $\mathcal H_{\pmb \mu}$ is in $\mathcal V$ if and only if 
$\|z^2f\|_{\pmb \mu}^2-2\|zf\|_{\pmb \mu}^2+\|f\|_{\pmb \mu}^2=0.$ By  \cite[Proposition 2.7]{GGR}, this is equivalent to $D_{\tau, 0}(f)=0,$ where $\tau=\sum_{j=1}^{n}c_j \delta_{\lambda_j}.$ Since $D_{\tau, 0}(f)=\sum_{j=1}^n c_j|f^*(\lambda_j)|^2$, we see that 
$$\mathcal{V}=\{f\in\mathcal H_{\boldsymbol \mu}: f^*(\lambda_j)=0, 1\leqslant j\leqslant n\}.$$
Hence, by \eqref{eqn16}, it follows that $\mathcal V^\perp=\mbox{span}~\{h_1,\ldots,h_n\}$. Since $\lambda_j'$s are distinct points on $\mathbb T$, it follows that 
$h_1,\ldots,h_n$ are linearly independent. This shows that $\dim \mathcal V^\perp =n$. 
To finish the proof, note that $M_z$ on $\mathcal H_{\pmb\mu}$ is in the Cowen-Douglas class $B_1(\mathbb D)$, see \cite[Corollary 6.1]{GGR}.  Since $\mathcal V$ is a closed $M_z$-invariant subspace of finite co-dimension, it follows from \cite[p. 71]{CDM} that $\dim (\mathcal V\ominus z\mathcal V) = 1.$ This completes the proof.
\end{proof}

\begin{rem}
By Remark \ref{remalgebra}\rm (ii), the space $\mathcal H_{\pmb\mu}$ is an algebra, where $\pmb \mu=(\mu_1,\mu_2+\sigma)$, $\mu_1,\mu_2\in \mathcal M_+(\mathbb T).$
Hence in view of Theorem \ref{algebra cod} we obtain that  for any $\mathcal M\in Lat(M_z,\mathcal H_{\pmb\mu}),$  $\dim (\mathcal M\ominus z\mathcal M) = 1.$ 
\end{rem}

\begin{theorem}\label{case m=2}
Let $\mu$ be a finite non-negative Borel measure on the unit circle $\mathbb T$. Let $n\in\mathbb N,$ $\lambda_1,\ldots, \lambda_n\in \mathbb T$ be distinct points and $c_1,\ldots, c_n$ be positive real numbers. Suppose $\mathcal M$ is a non-zero subspace in $Lat(M_z,\mathcal H_{\pmb\mu}),$ where $\pmb\mu=(\mu, \sum_{j=1}^{n}c_j \delta_{\lambda_j})$. Then $\dim (\mathcal M\ominus z\mathcal M) = 1.$ 
\end{theorem}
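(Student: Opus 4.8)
The plan is to intersect $\mathcal M$ with the distinguished subspace $\mathcal V$ produced in Lemma \ref{restriction}, on which $M_z$ is a cyclic analytic $2$-isometry, and then to transfer the classical codimension $1$ property for $2$-isometries to $\mathcal M$ by a short dimension count. So set $\mathcal M_0:=\mathcal M\cap \mathcal V$. Since $\mathcal M$ and $\mathcal V$ are both closed and $M_z$-invariant, so is $\mathcal M_0$. Applying Lemma \ref{Rank-Nullity} with $V=\mathcal V$ and $W=\mathcal M$, and recalling $\dim \mathcal V^{\perp}=n$ from Lemma \ref{restriction}, I obtain that $d:=\dim(\mathcal M\ominus \mathcal M_0)\leqslant n<\infty$. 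Moreover, by \cite[Proposition 6.4]{GGR} the operator $M_z|_{\mathcal M}$ is injective with closed range $z\mathcal M$ of finite positive codimension $k:=\dim(\mathcal M\ominus z\mathcal M)$; iterating shows $\mathcal M\supsetneq z\mathcal M\supsetneq z^2\mathcal M\supsetneq\cdots$, so $\mathcal M$ is infinite dimensional and the finite-codimension subspace $\mathcal M_0$ is non-zero.

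Next I would identify the wandering dimension of $\mathcal M_0$. By Lemma \ref{restriction}, $M_z|_{\mathcal V}$ is a cyclic analytic $2$-isometry, so by Richter's description of such operators \cite[Theorem 5.1]{R} it is unitarily equivalent to $M_z$ acting on $\mathcal H_{\nu,1}$ for some $\nu\in\mathcal M_+(\mathbb T)$. This unitary intertwines $M_z$, hence it carries the non-zero closed $M_z$-invariant subspace $\mathcal M_0$ onto a non-zero closed $M_z$-invariant subspace $\mathcal N$ of $\mathcal H_{\nu,1}$ and carries $z\mathcal M_0$ onto $z\mathcal N$. Invoking the $m=1$ codimension $1$ property \cite[Theorem 3.2]{RS-92}, I conclude $\dim(\mathcal M_0\ominus z\mathcal M_0)=\dim(\mathcal N\ominus z\mathcal N)=1$.

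The final step is to compute $\dim(\mathcal M\ominus z\mathcal M_0)$ in two ways. Applying \cite[Proposition 6.4]{GGR} to $\mathcal M_0$ as well shows $z\mathcal M_0$ is closed, and since $M_z|_{\mathcal M}$ is injective with closed range it is a topological isomorphism of $\mathcal M$ onto $z\mathcal M$ taking $\mathcal M_0$ onto $z\mathcal M_0$; therefore $\dim(z\mathcal M\ominus z\mathcal M_0)=\dim(\mathcal M\ominus \mathcal M_0)=d$. Using the chain of closed subspaces $z\mathcal M_0\subseteq z\mathcal M\subseteq \mathcal M$ gives $\dim(\mathcal M\ominus z\mathcal M_0)=\dim(\mathcal M\ominus z\mathcal M)+\dim(z\mathcal M\ominus z\mathcal M_0)=k+d$. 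On the other hand, the chain $z\mathcal M_0\subseteq \mathcal M_0\subseteq \mathcal M$ gives $\dim(\mathcal M\ominus z\mathcal M_0)=\dim(\mathcal M\ominus \mathcal M_0)+\dim(\mathcal M_0\ominus z\mathcal M_0)=d+1$. Comparing the two expressions yields $k+d=d+1$, so $k=1$, which is the assertion.

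I expect the substantive point to be the second paragraph: reducing the wandering dimension of $\mathcal M_0$ to the classical codimension $1$ theorem hinges on the fact that $\mathcal V$, being a cyclic analytic $2$-isometry, is genuinely (a unitary copy of) a Dirichlet-type space $\mathcal H_{\nu,1}$. The dimension bookkeeping in the last paragraph is then routine, provided one checks that all the ranges involved are closed (guaranteed by $M_z$ acting injectively with closed range on $\mathcal M$ and on $\mathcal M_0$) so that the finite codimension $d$ furnished by Lemma \ref{Rank-Nullity} cancels cleanly.
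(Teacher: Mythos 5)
Your proof is correct and follows essentially the same route as the paper's: both intersect $\mathcal M$ with the subspace $\mathcal V$ of Lemma \ref{restriction}, bound the codimension of $\mathcal M\cap\mathcal V$ in $\mathcal M$ by Lemma \ref{Rank-Nullity}, and import the codimension-$1$ property of the cyclic analytic $2$-isometry $M_z|_{\mathcal V}$ from \cite[Theorem 3.2]{RS-92}. The only difference is in the final bookkeeping: the paper writes $\mathcal M=\mathcal N\oplus\mathcal L$ and bounds $\dim(\mathcal M\ominus z\mathcal M)\leqslant 1$ directly, whereas you compute $\dim(\mathcal M\ominus z\mathcal M_0)$ along two chains to get equality; you also make explicit the appeal to Richter's representation theorem and the non-triviality of $\mathcal M\cap\mathcal V$, which the paper leaves implicit.
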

\begin{proof}
Let $\mathcal V$ be the subspace
$\ker( {M_z^*}^2M_z^2 - 2 M_z^* M_z+I)$. By Lemma \ref{restriction}, the  operator $T:=M_z|_{_{\mathcal V}}$ is a cyclic analytic $2$-isometry. As $\mathcal V\cap \mathcal M=\mathcal N$ (say) is an invariant subspace for $T$, from \cite[Theorem 3.2]{RS-92}, it follows that 
\begin{eqnarray}\label{Reduction on 2-isometry}
\dim (\mathcal N\ominus z\mathcal N)\leqslant 1.
\end{eqnarray}
 Let $\mathcal L$ be a subspace of $\mathcal M$ such that $\mathcal M=\mathcal N\oplus \mathcal L.$  Then from Lemma \ref{Rank-Nullity}, it follows that $\dim \mathcal L\leqslant n.$ 
Since $M_z$ is an injective map, $z\mathcal N\cap z\mathcal L=\{0\}$ and $\dim \mathcal L=\dim z\mathcal L$ as $\mathcal L$ is finite dimensional. 
From \eqref{Reduction on 2-isometry}, we now have
\begin{eqnarray*}
\dim (\mathcal M\ominus z\mathcal M) &=& \dim ((\mathcal N\oplus \mathcal L) \ominus z(\mathcal N\oplus \mathcal L))\\
&=& \dim ((\mathcal N\oplus \mathcal L) \ominus (z\mathcal N + z\mathcal L))\\
&\leqslant & 1.
\end{eqnarray*} 
This completes the proof of the theorem.
\end{proof}
\section{Relationship between $\mathcal H_{\pmb \mu}$ and $\mathcal H(b)$}\label{Relationship with H(b)}
%
In what follows, for two reproducing kernel Hilbert spaces $\mathcal H_1$ and $\mathcal H_2$ consisting of holomorphic functions on the unit disc $\mathbb D$, we write $\mathcal H_1=\mathcal H_2$ to mean that $\mathcal H_1$ and $\mathcal H_2$ are equal as a set but the norms are not necessarily same. In case $\mathcal H_1=\mathcal H_2,$ using closed graph theorem, it can be shown that the norms on $\mathcal H_1$ and $\mathcal H_2$ must be equivalent. 


For a positive measure $\mu$ in $\mathcal M_+(\mathbb T)$, we set the notation (as in \cite{CostaraRansford})
\begin{equation*}
V_{\mu}(z):=\int_{\mathbb T}\frac{d\mu(\lambda)}{|1-\lambda\overline{z}|^2},\quad z\in \mathbb D.
\end{equation*}
With the help of generalized local Douglas formula, for a fixed $w\in\mathbb D,$ the following lemma computes the semi-norm of Szeg\"o kernel $S(z,w):=\frac{1}{1-z\overline{w}}$ in $\mathcal H_{\mu, n},$ in terms of $V_{\mu}(w).$ 
\begin{lemma}\label{norm of hardy kernel}
Let $\mu$ be a finite positive Borel measure on $\mathbb T$ and $n$ be a positive integer.
Then 
\begin{equation*}
D_{\mu,n}\Big(\frac{1}{1-z\overline{w}}\Big)=\frac{|w|^{2n}}{(1-|w|^2)^n}V_{\mu}(w),\quad w\in \mathbb D.
\end{equation*}
\end{lemma}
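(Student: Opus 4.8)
The plan is to reduce the weighted integral $D_{\mu,n}$ to the unweighted one $D_{\sigma,n-1}$ by means of the local Douglas formula, and then to evaluate the latter by an elementary power-series computation. Throughout I fix $w\in\mathbb D$ and write $f(z):=S(z,w)=\frac{1}{1-z\overline{w}}$. Since $|w|<1$, this $f$ is holomorphic on a neighbourhood of $\overline{\mathbb D}$; in particular $f\in\mathcal H_{\mu,n}$, and its boundary value $f^*(\lambda)=\frac{1}{1-\lambda\overline{w}}$ exists classically for every $\lambda\in\mathbb T$. By the Fubini identity $D_{\mu,n}(f)=\int_{\mathbb T}D_{\lambda,n}(f)\,d\mu(\lambda)$ established just after the proof of Theorem \ref{generalized local Douglas formula}, it suffices to compute the local integral $D_{\lambda,n}(f)$ and integrate it against $\mu$.

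Next I would apply the generalized local Douglas formula in the form \eqref{GRS formula}. A one-line algebraic simplification shows that the difference quotient is again a constant multiple of the Szeg\"o kernel, namely
\begin{equation*}
\frac{f(z)-f^*(\lambda)}{z-\lambda}=\frac{\overline{w}}{1-\lambda\overline{w}}\cdot\frac{1}{1-z\overline{w}},\qquad z\in\mathbb D.
\end{equation*}
Consequently, by \eqref{GRS formula},
\begin{equation*}
D_{\lambda,n}(f)=\Big|\frac{\overline{w}}{1-\lambda\overline{w}}\Big|^{2}\,D_{\sigma,n-1}\big(S(\cdot,w)\big)
=\frac{|w|^2}{|1-\lambda\overline{w}|^2}\,D_{\sigma,n-1}\big(S(\cdot,w)\big),
\end{equation*}
so the problem is reduced to evaluating $D_{\sigma,n-1}$ of the Szeg\"o kernel itself.

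For this last step I would expand $\frac{1}{1-z\overline{w}}=\sum_{k=0}^\infty\overline{w}^{k}z^{k}$ and invoke the explicit formula $D_{\sigma,m}\big(\sum_k a_kz^k\big)=\sum_{k=m}^\infty\binom{k}{m}|a_k|^2$ recorded in the introduction, together with the generating-function identity $\sum_{k=m}^\infty\binom{k}{m}x^{k}=\frac{x^{m}}{(1-x)^{m+1}}$, valid for $|x|<1$. Taking $m=n-1$ and $x=|w|^2$ yields $D_{\sigma,n-1}\big(S(\cdot,w)\big)=\frac{|w|^{2n-2}}{(1-|w|^2)^{n}}$. Substituting this into the previous display gives $D_{\lambda,n}(f)=\frac{|w|^{2n}}{|1-\lambda\overline{w}|^2(1-|w|^2)^{n}}$, and integrating $\frac{1}{|1-\lambda\overline{w}|^2}$ against $d\mu(\lambda)$ produces exactly $\frac{|w|^{2n}}{(1-|w|^2)^{n}}V_\mu(w)$, as claimed.

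Since $f$ extends holomorphically across $\mathbb T$, there are no boundary-value or convergence subtleties, and the Fubini interchange is licensed by the nonnegativity of the integrand. I therefore expect no genuine obstacle; the only content is the algebraic simplification of the difference quotient into a multiple of $S(\cdot,w)$ and the summation identity for $\sum_k\binom{k}{m}x^k$, both of which are routine.
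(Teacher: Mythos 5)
Your proposal is correct and follows essentially the same route as the paper: Fubini reduces $D_{\mu,n}$ to the local integrals $D_{\lambda,n}$, the local Douglas formula turns each of these into $\frac{|w|^2}{|1-\lambda\overline w|^2}\,D_{\sigma,n-1}\big(S(\cdot,w)\big)$, and the remaining quantity is evaluated explicitly. The only (immaterial) difference is in that last evaluation: you obtain $D_{\sigma,n-1}\big(S(\cdot,w)\big)=\frac{|w|^{2n-2}}{(1-|w|^2)^n}$ from the Taylor-coefficient formula and the identity $\sum_{k\geqslant m}\binom{k}{m}x^k=\frac{x^m}{(1-x)^{m+1}}$, whereas the paper uses the reproducing kernel of the weighted Bergman space $A^2\big((n-1)(1-|z|^2)^{n-2}dA\big)$; both computations are routine and correct.
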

\begin{proof}
Let $\lambda\in \mathbb T$. Then by Theorem \ref{generalized local Douglas formula}, we have
\begin{align*}
D_{\lambda,n}\Big(\frac{1}{1-z\overline{w}}\Big)&=D_{\sigma,n-1}\Big(\frac{\frac{1}{1-z\overline{w}}-\frac{1}{1-\lambda\overline{w}}}{z-\lambda}\Big)\\
&=D_{\sigma,n-1}\Big(\frac{\overline{w}}{(1-z\overline{w})(1-\lambda\overline{w})}\Big)\\
&=\frac{|w|^2}{|1-\lambda\overline{w}|^2}D_{\sigma,n-1}\Big(\frac{1}{1-z\overline{w}}\Big).
\end{align*}
To complete the proof for the case $\mu$ is point mass measure $\delta_\lambda$ at the point $\lambda,$
we claim that 
\begin{eqnarray}\label{D-sigma(Szego)}
D_{\sigma,n-1}\Big(\frac{1}{1-z\overline{w}}\Big)=\frac{|w|^{2n-2}}{(1-|w|^2)^n}.
\end{eqnarray}
Since $\frac{1}{1-z\overline{w}}$ is the reproducing kernel for the Hardy space $H^2,$ it is easy to verify \eqref{D-sigma(Szego)} for the case $n=1.$ Suppose $n>1$, then by definition of $D_{\sigma,n-1}(\cdot)$ and the fact that $\frac{1}{(1-z\overline{w})^{n}}$ is the reproducing kernel of the weighted Bergman space $A^2\big((n-1)(1-|z|^2)^{n-2}dA(z)\big)$, we get
\begin{align*}
D_{\sigma,n-1}\Big(\frac{1}{1-z\overline{w}}\Big)&=(n-1)|w|^{2n-2}\int_{\mathbb D}\frac{(1-|z|^2)^{n-2}}{|1-z\overline{w}|^{2n}} dA(z)\\
&=\frac{|w|^{2n-2}}{(1-|w|^2)^n}.
\end{align*}
Hence equation\eqref{D-sigma(Szego)} stands verified and consequently we have
$$D_{\lambda,n}\Big(\frac{1}{1-z\overline{w}}\Big)=\frac{|w|^{2n}}{|1-\lambda\overline{w}|^2(1-|w|^2)^n}.$$
Integrating both the sides with respect to $\mu,$ we get
$$D_{\mu,n}\Big(\frac{1}{1-z\overline{w}}\Big)=\int_{\mathbb T}D_{\lambda,n}\Big(\frac{1}{1-z\overline{w}}\Big)d\mu(\lambda)=\frac{|w|^{2n}}{(1-|w|^2)^n}V_{\mu}(w).$$
This completes the proof of the lemma.
\end{proof}


In the following main result of this section,
we prove that the spaces $\mathcal H_{\pmb \mu}$ ($m\geqslant 2)$ and  $\mathcal H(b)$ can not be same even as sets. 

\begin{theorem}\label{never de Branges-Rovnyak}
Let $m\geqslant 2$ and $\pmb \mu= (\mu_1,\ldots,\mu_{m})$ be an $m$-tuple of positive measures in $\mathcal M_{+}(\mathbb T)$. If  $ \mathcal H_{\pmb \mu}=\mathcal H(b)$ for some $b\in H^\infty$ with $\|b\|_{\infty}\leqslant 1$ then 
$$\mu_2=\cdots=\mu_{m}=0.$$ 
\end{theorem}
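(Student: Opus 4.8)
The plan is to test the hypothesised identity $\mathcal H_{\pmb\mu}=\mathcal H(b)$ against the family of Szeg\"o kernels $S_w(z)=\frac{1}{1-z\overline w}$, $w\in\mathbb D$, for which both norms can be computed in closed form, and then to extract a contradiction from the rate at which the $\mathcal H_{\pmb\mu}$-norm blows up as $w$ approaches $\mathbb T$. By the closed graph theorem remark at the start of this section, the assumption forces $\|\cdot\|_{\pmb\mu}$ and $\|\cdot\|_{\mathcal H(b)}$ to be equivalent, say $c_1\|f\|_{\pmb\mu}^2\leqslant\|f\|_{\mathcal H(b)}^2\leqslant c_2\|f\|_{\pmb\mu}^2$ with constants independent of $f$. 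First I would record that $b$ must be non-extreme: the constant function $1$ lies in $\mathcal H_{\pmb\mu}$ (since $D_{\mu_j,j}(1)=0$), hence in $\mathcal H(b)$, and $1\in\mathcal H(b)$ is equivalent to $b$ being a non-extreme point of the unit ball of $H^\infty$ (see \cite{deBR}). Let $a$ be its Pythagorean mate, the outer function with $a(0)>0$ and $|a|^2+|b|^2=1$ a.e.\ on $\mathbb T$, and set $\varphi=b/a$.

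Since $a$ is outer it is zero-free on $\mathbb D$, so Sarason's description of $\mathcal H(b)$ in the non-extreme case, $\|f\|_{\mathcal H(b)}^2=\|f\|_2^2+\|f^+\|_2^2$ where $T_{\overline b}f=T_{\overline a}f^+$, applies to each $S_w$. Using the eigenrelations $T_{\overline a}S_w=\overline{a(w)}S_w$ and $T_{\overline b}S_w=\overline{b(w)}S_w$ one solves $S_w^+=\overline{\varphi(w)}\,S_w$, whence
\[
\|S_w\|_{\mathcal H(b)}^2=\|S_w\|_2^2+\|S_w^+\|_2^2=\frac{1+|\varphi(w)|^2}{1-|w|^2}.
\]
On the other side, Lemma \ref{norm of hardy kernel} yields $D_{\mu_j,j}(S_w)=\frac{|w|^{2j}}{(1-|w|^2)^j}V_{\mu_j}(w)$, so that
\[
\|S_w\|_{\pmb\mu}^2=\frac{1}{1-|w|^2}+\sum_{j=1}^{m}\frac{|w|^{2j}}{(1-|w|^2)^j}\,V_{\mu_j}(w).
\]

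Multiplying the norm equivalence through by $1-|w|^2$ then gives $1+|\varphi(w)|^2\asymp 1+\Phi(w)$, where $\Phi(w)=\sum_{j=1}^m\frac{|w|^{2j}}{(1-|w|^2)^{j-1}}V_{\mu_j}(w)$. The three structural inputs I would then exploit are: the pointwise bound $|\varphi(w)|^2=|b(w)|^2/|a(w)|^2\leqslant 1/|a(w)|^2$ (from $\|b\|_\infty\leqslant 1$); the uniform lower bound $V_{\mu_j}(w)\geqslant\tfrac14\mu_j(\mathbb T)$ for all $w\in\mathbb D$ (since $|1-\lambda\overline w|<2$); and the boundary behaviour of an outer function. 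Now suppose, for contradiction, that $\mu_k\neq 0$ for some $k\geqslant 2$. Keeping only the $j=k$ term forces $\Phi(w)\geqslant\tfrac14|w|^{2k}\mu_k(\mathbb T)(1-|w|^2)^{-(k-1)}\to\infty$ as $|w|\to1$. Feeding this into the lower half of the equivalence, $c_1\Phi(w)-1\leqslant|\varphi(w)|^2\leqslant 1/|a(w)|^2$, gives $|a(w)|^2\leqslant C(1-|w|^2)^{k-1}$ for $|w|$ near $1$, with $k-1\geqslant 1$. Thus $|a(w)|\to0$ uniformly in every direction as $|w|\to1$, which contradicts the fact that, because $\log|a|\in L^1(\mathbb T)$, the radial limit $a^*(\zeta)$ exists and is nonzero for a.e.\ $\zeta\in\mathbb T$, so $|a(r\zeta)|\to|a^*(\zeta)|>0$ along almost every radius. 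Hence no such $k$ exists, i.e.\ $\mu_2=\cdots=\mu_m=0$.

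The main obstacle, and the only genuinely delicate step, is the explicit evaluation of $\|S_w\|_{\mathcal H(b)}^2$ through the non-extreme $(a,b)$-pair machinery together with the verification that $1+|\varphi(w)|^2$ is controlled above by $1/|a(w)|^2$; once the two kernel norms are compared, the contradiction reduces to the clean dichotomy that an outer function cannot decay uniformly to zero at the boundary.
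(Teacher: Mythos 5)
Your proposal is correct and follows essentially the same route as the paper: both compare $\|S_w\|^2_{\mathcal H(b)}=\frac{1+|b(w)/a(w)|^2}{1-|w|^2}$ (the paper cites Costara--Ransford for this; you rederive it via the Toeplitz eigenrelations) with Lemma \ref{norm of hardy kernel}, and both exploit $V_{\mu_j}(w)\geqslant\tfrac14\mu_j(\mathbb T)$ together with the boundary behaviour of the pair $(b,a)$. The only difference is organizational: the paper lets $w\to\zeta$ radially along an almost-every radius where $b^*/a^*$ exists and reads off $\mu_j(\mathbb T)=0$ directly, whereas you argue by contradiction that $|a(w)|\to 0$ uniformly, violating $a^*\neq 0$ a.e.; these are two phrasings of the same estimate.
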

\begin{proof}
Suppose that there exists a $b\in H^{\infty}$  with $\|b\|_{\infty}\leqslant 1$ such that $\mathcal H(b)= \mathcal H_{\pmb \mu}$. Then, by an application of closed graph theorem, there exist constants $\alpha>0$ and $\beta>0$ such that 
\begin{equation}\label{equiv norms}
\beta \|f\|_{\mathcal H(b)}\leqslant\|f\|_{\pmb \mu}\leqslant \alpha\|f\|_{\mathcal H(b)},~f\in \mathcal H(b).
\end{equation}
Since $ \mathcal H_{\pmb \mu}$ contains all functions holomorphic on a neighbourhood of $\overline{\mathbb D}$, it follows from \cite[Theorem 2.1]{CostaraRansford} that $b$ is not an extreme point of the unit ball of $H^\infty$. Hence there exists a outer function $a$ such that $(b,a)$ is a pair. Note that the function $\frac{1}{1-z\overline{w}}$ is holomorphic on a neighbourhood of $\overline{\mathbb D}$ for all $w\in \mathbb D$. By  \cite[Lemma 3.4]{CostaraRansford}, we have 
\begin{equation*}
\Big\|\frac{1}{1-z\overline{w}}\Big\|^2_{\mathcal H(b)}=\frac{1+\big|\frac{b(w)}{a(w)}\big|^2}{1-|w|^2},~w\in \mathbb D.
\end{equation*}    
Also by Lemma \ref{norm of hardy kernel} we have 
\begin{equation*}
\Big\|\frac{1}{1-z\overline{w}}\Big\|^2_{\pmb \mu}=\frac{1}{1-|w|^2}+\sum_{j=1}^{m-1}\frac{|w|^{2j}}{(1-|w|^2)^j}V_{\mu_j}(w),~w\in \mathbb D.
\end{equation*}
Hence by \eqref{equiv norms} we obtain
\begin{equation*}
\frac{1}{1-|w|^2}+\sum_{j=1}^{m-1}\frac{|w|^{2j}}{(1-|w|^2)^j}V_{\mu_j}(w)\leqslant \alpha~ \frac{1+\big|\frac{b(w)}{a(w)}\big|^2}{1-|w|^2},~~w\in \mathbb D.
\end{equation*}
Thus for $2\leqslant j\leqslant m-1$, we have
$$\frac{|w|^{2j}}{(1-|w|^2)^j}V_{\mu_j}(w)\leqslant  \alpha~ \frac{1+\big|\frac{b(w)}{a(w)}\big|^2}{1-|w|^2}.$$
Since  $V_{\mu_j}(w)\geqslant \frac{\mu_j(\mathbb T)}{4}$ for all $w\in \mathbb D$, we get that 
\begin{equation*}
\mu_j(\mathbb T)\leqslant 4\alpha (1-|w|^2)^{j-1}\frac{1+\big|\frac{b(w)}{a(w)}\big|^2}{|w|^{2j}}.
\end{equation*}
Let $\zeta$ 
be any point in $\mathbb T$ such that $\frac{b^*(\zeta)}{a^*(\zeta)}$ exists. Since $j\geqslant 2$, putting $w=r\zeta$ in the above inequality and taking limit $r\to 1^-$, we get that $\mu_j(\mathbb T)=0$, completing the proof of the theorem. 
\end{proof}



 \begin{rem}
Let  $\boldsymbol{B}:\mathbb D\to (\ell^2)_1$ be an analytic map with $\boldsymbol B(z)=(b_i(z))_{i=0}^\infty$, where $(\ell^2)_1$ is the closed unit ball in $\ell^2$. The space  $\mathcal H(\boldsymbol B)$ corresponding to $\boldsymbol B$ is defined to be the reproducing kernel Hilbert space with the kernel function $\frac{1-\sum_{i=0}^\infty b_i(z)\overline{b_i(w)}}{1-z\overline w}$, $z,w\in \mathbb D$. The space $\mathcal H(\boldsymbol B)$  
is said to be of finite rank if there exists a $N\in\mathbb N$ and an analytic map $\boldsymbol{C}:\mathbb D\to (\ell^2)_1$,  $\boldsymbol C(z)=(c_i(z))_{i=0}^\infty$ such that $c_i=0$ for all $i> N$   
and $\mathcal H(\boldsymbol B)=\mathcal H(\boldsymbol C),$ with equality of norms. 
 Since the backward shift operator  $L$, defined by $(Lf)(z)=\frac{f(z)-f(0)}{z}$, $z\in\mathbb D$, is contractive on $\mathcal H_{\pmb\mu}$ (see \cite[Lemma 2.9]{GGR}), it follows from \cite[Proposition 2.1]{AM} that $\mathcal H_{\pmb\mu}$ coincides with the space $\mathcal H(\boldsymbol B)$ for some $\boldsymbol B$ with $\boldsymbol B(0)=0$. 
 In the case of finite rank $\mathcal H(\boldsymbol B)$ spaces with $\boldsymbol B(0)=0$, if the operator $M_z$ is bounded, then it turns out that  $\overline{\mbox{ran}}(M_z^*M_z-I)$ is finite dimensional, (see \cite[Lemma 5.1]{LGR}).
On the contrary,  for the space $\mathcal H_{\pmb\mu}$, where $\pmb\mu=(\mu_1,\ldots,\mu_{m})$, $m\geq 2$, it is worth noting that the space $\ker(M_z^*M_z-I)$ is finite dimensional. This follows easily from \cite[Proposition 2.7]{GGR}. Therefore, in this case, $\overline{\mbox{ran}}(M_z^*M_z-I)$ is infinite dimensional.
Consequently, we conclude that if $m\geqslant 2$, then  for any $\pmb\mu=(\mu_1,\ldots,\mu_{m})$, $\mu_{m}\neq 0$,
$\mathcal H_{\pmb \mu}$ can not be equal to any $\mathcal H(\boldsymbol B)$ space of finite rank with equality of norms. It will be interesting to know whether for a fixed $\pmb\mu=(\mu_1,\ldots,\mu_{m}),$ does there exists a  $\mathcal H(\boldsymbol B)$ space of finite rank such that $\mathcal H_{\pmb \mu}=\mathcal H(\boldsymbol B)$ with just equivalence of norms.
\end{rem}

\begin{rem}\label{Differs}
The last part of \cite[Theorem 1.1]{LGR} says that if $M_z$ on $\mathcal H(b)$ is a strict $2m$-isometry for some $m\in\mathbb N$ then there exists a $\zeta\in\mathbb T$ such that $\mathcal H(b)$ is equal to $\mathcal D_\zeta^m$-space defined in \cite[page 3]{LGR} with equivalence of norms. Theorem \ref{never de Branges-Rovnyak} indicates that the local Dirichlet space of order $m$ in the present article differs from that in \cite{LGR}. 
\end{rem}

\end{document}